\documentclass{article}

\usepackage[noadjust]{cite}

\usepackage{amsmath,amssymb,amsfonts,amsthm}
\usepackage{threeparttable}
\usepackage{adjustbox}
\usepackage{makecell}
\usepackage{booktabs}
\usepackage{nccmath,mathtools}
\usepackage{subfig}
\usepackage{geometry}
\usepackage[hidelinks]{hyperref}
\usepackage{tablefootnote}

\usepackage{enumitem}
\usepackage{bbold}
\setlist{topsep=1pt,itemsep=0pt,labelwidth=8pt,labelsep=8pt,leftmargin=\parindent}
\setlength{\parindent}{16pt}
\setlength{\parskip}{3pt}
\linespread{1.15}

\tolerance=10000

\usepackage{graphicx}

\usepackage[ruled]{algorithm2e}
\let\oldnl\nl \newcommand{\nonl}{\renewcommand{\nl}{\let\nl\oldnl}}
\usepackage{setspace}

\DeclareMathOperator*{\argmin}{arg\,min}

\DeclarePairedDelimiterX\setc[2]{\{}{\}}{#1 : #2}
\DeclarePairedDelimiterX\setv[2]{\{}{\}}{#1 \;\delimsize\vert\; #2}
\DeclarePairedDelimiterX\Econd[2]{[}{]}{#1 \mkern2mu\delimsize\vert\mkern2mu #2}
\DeclarePairedDelimiterX\Vcond[2]{(}{)}{#1 \mkern2mu\delimsize\vert\mkern2mu #2}
\newtheorem{lemma}{Lemma}
\newtheorem{theorem}{Theorem}
\newtheorem{corollary}{Corollary}

\theoremstyle{definition}
\newtheorem{definition}{Definition}
\newtheorem{assumption}{Assumption}

\theoremstyle{remark}
\newtheorem{remark}{Remark}

\newcommand\numberthis{\addtocounter{equation}{1}\tag{\theequation}}

\newcommand{\defeq}{\overset{\mathrm{def}}{=\joinrel=}}
\DeclareMathOperator{\tr}{tr}

\newcommand{\AlgFullname}{Zeroth-Order Incremental Variance Reduction}
\newcommand{\AlgAbbrv}{ZIVR}

\title{\LARGE \bf
\AlgAbbrv: An Incremental Variance Reduction Technique For Zeroth-Order Composite Problems
}

\author{Silan Zhang and Yujie Tang\thanks{This work was supported by the National Natural Science Foundation of China through grants 72301008 and 72431001.
S. Zhang and Y. Tang are with the School of Advanced Manufacturing and Robotics, Peking University, Beijing, China. 
        Email: {\tt\small zhangsilan@stu.pku.edu.cn}, {\tt\small yujietang@pku.edu.cn}}}

\date{}

\begin{document}

\maketitle

\begin{abstract}
  This paper investigates zeroth-order (ZO) finite-sum composite optimization. Recently, variance reduction techniques have been applied to ZO methods to mitigate the non-vanishing variance of 2-point estimators in constrained/composite optimization, yielding improved convergence rates. However, existing ZO variance reduction methods typically involve batch sampling of size at least $\Theta(n)$ or $\Theta(d)$, which can be computationally prohibitive for large-scale problems. In this work, we propose a general variance reduction framework, \AlgFullname{} (\AlgAbbrv), which supports flexible implementations---including a pure 2-point zeroth-order algorithm that eliminates the need for large batch sampling. Furthermore, we establish comprehensive convergence guarantees for \AlgAbbrv{} across strongly-convex, convex, and non-convex settings that match their first-order counterparts. Numerical experiments validate the effectiveness of our proposed algorithm.
\end{abstract}

\section{Introduction}
\label{section:introduction}

Zeroth-order (ZO) or derivative-free optimization problems have been the subject of intense studies recently. They have emerged from various practical situations where explicit gradient information of the objective function can be very difficult or even impossible to compute, and only function evaluations are accessible to the decision maker. Zeroth-order optimization methods have seen wide applications in multiple disciplines, including signal processing~\cite{liu2020primer}, parameter tuning for machine learning~\cite{snoek_practical_2012}, black-box adversarial attacks on deep neural networks~\cite{chen_zoo_2017}, model-free control~\cite{ren_lqr_2021}, etc.

In this paper we focus on zeroth-order problems of the following composite finite-sum form,
\begin{equation} \label{eq:intro_formulation}
 \begin{aligned}
    \min_{x \in \mathbb{R}^{d}} & \quad h(x) = f(x) + \psi(x),\\
    \text{with} & \quad f(x) \defeq \frac{1}{n} \sum_{i=1}^{n} f_{i}(x).
\end{aligned}   
\end{equation}
Here each $ f_{i}:\mathbb{R}^d \rightarrow \mathbb{R}, i = 1, 2, \cdots, n$ is differentiable, $\psi : \mathbb{R}^d \rightarrow \mathbb{R} \cup \{+\infty \}$ is convex and lower semicontinuous but possibly non-differentiable. This formulation encompasses convex problems with feasible set constraints and/or non-smooth regularization including 
logistic regression~\cite{ng2004feature}, Cox regression~\cite{10.1093/bioinformatics/btp322} problems with $L_1$ regularization, online sensor detection problems~\cite{chen2018bandit}, etc.
Furthermore, we assume that only function value (zeroth-order) information of each $f_{i}(x)$ is available for the decision maker. 

Various types of zeroth-order algorithms have been proposed to address the challenge of lacking gradient information in the optimization procedure. In this paper, we particularly focus on the line of methods initiated by~\cite{nemirovskij1983problem} and further developed by~\cite{nesterov_random_2017,duchi2015optimal,ghadimi2013stochastic} and other related works. Basically, in this line of methods, a zeroth-order gradient estimator is constructed based on function values acquired at a few randomly explored points, which will then be combined with first-order optimization frameworks, leading to a zeroth-order optimization algorithm. For instance, \cite{nesterov_random_2017} shows that, by combining a two-point Gaussian random gradient estimator with the vanilla gradient descent and Nesterov's accelerated gradient descent iterations, we can obtain a zeroth-order optimization algorithm for deterministic unconstrained smooth convex problems with an oracle complexity of $O(d/\epsilon)$ and $O(d/\sqrt{\epsilon})$, respectively, where $d$ denotes the problem dimension. Similar techniques have also been adapted to the stochastic settings and non-convex settings~\cite{nesterov_random_2017,duchi2015optimal,shamir2017optimal,ghadimi2013stochastic}. Recently, \cite{balasubramanian2022zeroth} proposed a zeroth-order stochastic approximation algorithms based on the Frank-Wolfe method, focusing particularly on handling constraints,
high dimensionality and saddle-point avoiding. \cite{zhang2022new} proposed a one-point feedback scheme that queries the function value once per iteration but achieves comparable performance with two-point zeroth-order algorithms. \cite{kungurtsev2021zeroth} and \cite{pougkakiotis2023zeroth} studied zeroth-order algorithms for stochastic weakly convex optimization. \cite{pmlr-v202-ren23b} investigated how to escape saddle points for zeroth-order algorithms using two-point gradient estimators.

Another challenge of solving~\eqref{eq:intro_formulation} comes from the finite-sum nature of problem~\eqref{eq:intro_formulation}. Even for first-order problems, sampling every $f_{i}(x)$ to do a full-batch gradient descent can be costly, while algorithms like proximal stochastic gradient descent suffer from the variance of the stochastic gradient, resulting in suboptimal convergence rate. Variance reduction techniques greatly improve the rate of first-order stochastic algorithms by reducing the sampling variance. Notably, SVRG~\cite{johnson2013accelerating} algorithm leverages a full-batch gradient snapshot at a past point as control variate, while SAGA~\cite{defazio2014saga} incrementally updates the control variate at each iteration. Algorithms like SARAH~\cite{nguyen2017sarah} and SPIDER~\cite{fang2018spider} use biased gradient estimator to get improved rate under non-convex settings. Katyusha~\cite{allen2018katyusha} and ASVRG~\cite{shang2018asvrg} combine variance reduction and momentum acceleration to achieve even better rates for strongly convex and convex problems, and~\cite{kovalev2020don} uses probabilistic update to construct both loopless version of Katyusha and SVRG.

However, when adopting two-point zeroth-order gradient estimators for solving the problem~\eqref{eq:intro_formulation}, we have to deal with the two-fold variance from both the random sampling of component functions, and the random directions along which we construct the zeroth-order gradient estimator. This problem is especially pronounced in constrained or composite problems. For example, the zeroth-order estimator used in \cite{nesterov_random_2017,duchi2015optimal,shamir2017optimal} has the following form
\begin{equation*}\frac{f_{i}(x+\beta u)-f_{i}(x)}{\beta} u,
\end{equation*}
where $\beta>0$ is a parameter called the \emph{smoothing radius}, and $u$ is a \emph{random perturbation direction} either sampled from the normal distribution $\mathcal{N}(0,I_d)$ or the uniform distribution on the sphere of radius $\sqrt{d}$. As pointed out in~\cite{zhang2024zeroth}, even for composite deterministic problems with only one component function, the random perturbation direction alone would still cause unstable behavior around the optimal point just like stochastic gradient descent, resulting in an inferior rate of $O(1/\epsilon^{2})$. Therefore, simply plugging two-point estimators into existing first-order variance reduction frameworks is not enough to achieve a better convergence rate than stochastic gradient descent. 

Despite these difficulties, some zeroth-order variance reduction algorithms that handles both sources of variance began to emerge. Notably, variants of zeroth-order SVRG and SPIDER algorithms tackles the two-fold variance at the same time by combining $\Theta(nd)$-point and $2$-point zeroth-order gradient estimators, achieving an oracle complexity that in some cases matches their first-order counterparts~\cite{fang2018spider, ji2019improved, rando2025structured}. However, with the exception of \cite{rando2025structured} which deals with non-convex composite problems, these results are restricted to unconstrained problems. Moreover, all these algorithms require $\Theta(nd)$ oracle queries during the full-batch phase, which can be problematic when $n$ and $d$ are large. \cite{huang2019faster} extends both SVRG and SAGA algorithm to zeroth-order non-convex composite problems; however, the proposed algorithms either demand constant $\Theta(d)$-point full-batch sampling (CooSGE), or require a strong assumption of bounded gradient and converge only to neighborhood of the optimal point (GauSGE). \cite{huang2022accelerated} extends the STORM momentum variance reduction scheme~\cite{cutkosky2019momentum} to zeroth-order case. However, their analysis requires a bound on the variance of stochastic zeroth-order gradient independent of problem dimension $d$, which is unreasonable when applied to high-dimensional problems. More recently, \cite{di2024double} combines SVRG and gradient sketching techniques and achieve a rate comparable to first-order algorithms for strongly convex composite problems. However, their algorithm still requires a maximum batch size of $\Theta(n)$; the double variance reduction structure also makes the analysis more involved.

\begin{table}
  \centering
  \caption{Comparison of oracle complexities, average and maximum batch sizes of relevant zeroth-order methods for smooth finite-sum optimization.}
  \label{tb:compare}
  \setlength{\tabcolsep}{3pt}
  \renewcommand{\arraystretch}{1.5}
  \begin{adjustbox}{max width=\textwidth}
    \begin{threeparttable}
     \begin{tabular}{ccccccc} 
    \toprule
    & Strongly convex & Convex & Non-convex & \makecell{Average \\ batch size}& \makecell{Maximum \\ batch size}& Constrained \\[3pt]
    \midrule
       \makecell{Zeroth-order \\ Proximal SGD} & No result &  $\displaystyle O\!\left(\frac{d \sigma^{2}}{\epsilon^{2}}\right) $\,\tnote{1} &  $\displaystyle O\!\left(\frac{d \sigma^{2}}{\epsilon^{2}}\right)$\,\tnote{1} & $\Theta(1)$ & $\Theta(1)$ & Yes\\[8pt]
  \makecell{ZO-SVRG-Coord-Rand\\\cite{ji2019improved}} 
         & No result & $\displaystyle O\!\left(d \max\!\left\{n, \frac{1}{\epsilon}\right\} \ln\frac{1}{\epsilon}\right)$\tnote{2}
         & $\displaystyle O\!\left(d \min\!\left\{\frac{n^{2/3}}{\epsilon}, \frac{1}{\epsilon^{5/3}}\right\}\right)$
         & $\Theta(1)$ 
         & $\Theta(nd)$ 
         & No\\[8pt]
  ZO-SPIDER-Coord~\cite{ji2019improved}  & $\displaystyle O\!\left( d\!\left(\frac{ \sqrt{n}}{\mu} + \frac{m}{\mu^{2}}\right)\ln\frac{1}{\epsilon}\right)$ & No result & $\displaystyle O\!\left(d \min\!\left\{\frac{n^{2/3}}{\epsilon,} \frac{1}{\epsilon^{5/3}}\right\}\right)$ & $\Theta(1)$ & $\Theta(nd)$ & No\\[6pt]
  ZO-ProxSAGA~\cite{huang2019faster} & No result & No result& No result\tnote{3} & $\Theta(1)$ & $\Theta(1)$ & Yes\\
  SPIDER-SZO~\cite{fang2018spider} & No result & No result & $\displaystyle O\!\left(d \min\left\{\frac{n^{1/2}}{\epsilon}, \frac{1}{\epsilon^{3/2}}\right\}\right)$ & $\Theta(1)$ & $\Theta(nd)$ & No\\[3pt]
  ZPDVR~\cite{di2024double} & $ \displaystyle O\!\left(d\!\left(n + \frac{L}{\mu}\right) \ln\left(\frac{1}{\epsilon}\right)\right)$ & No result & No result & $\Theta(1)$ &$\Theta(n)$ & Yes\\
  VR-SZD~\cite{rando2025structured} & No result & No result & $\displaystyle O(n^{2/3}d/\epsilon)$ & $ \displaystyle \Theta(n^{2/3}d)$ & $\Theta(nd)$ & Yes\\
  \midrule
  \AlgAbbrv{} & $\displaystyle O\!\left(d\!\left(n+\frac{L}{\mu}\right)\ln\frac{n}{\epsilon}\right)$
       & $\displaystyle O\!\left(\frac{dn}{\epsilon}\right)$ 
       & $\displaystyle O\!\left(\frac{d^{3/2} n}{\sqrt{R } \epsilon}\right)$\,\tnote{4} 
       & $\Theta\!\left(R \right)$& $\Theta(R )$ & Yes
       \\
   \bottomrule
\end{tabular}   
\begin{tablenotes}
  \item The listed oracle complexities are the number of zeroth-order queries needed to obtain a point $x$ achieving $\mathbb{E}[f(x)-f^\ast]\leq\epsilon$ for the strongly-convex and convex cases, and $\mathbb{E}[\|\nabla f(x)\|_{2}^2]\leq\epsilon$ for the non-convex case, respectively.
  \item[1] Here $\sigma^2$ denotes an upper bound of $\|\nabla f_i(x) - \nabla f(x)\|_{2}^2$ for all $x$ and $i$.
  \item[2] The theoretical analysis imposed boundedness of the iterates as an assumption even in the unconstrained setting.
  \item[3] The work~\cite{huang2019faster} assumed bounded gradient $\|\nabla f_{i}(x)\|_{2}^2 \leq \sigma^{2}$ for all $x$, and only convergence with limited precision $O(d\sigma^2)$ was established for the non-convex case.
  \item[4] $R$ is the batch size of our choice. In the non-convex case, we give complexity results for $R\in [1, n^{2/3}d]$.  The best complexity bound is $O(n^{2/3} d / \epsilon)$ by choosing $R= \lfloor{n^{2/3} d}\rfloor$.
\end{tablenotes}
    \end{threeparttable}
  \end{adjustbox}
\end{table}

\subsection{Main Contributions}
In this work, we propose a general framework, \AlgFullname, to construct variance-reduced zeroth-order gradient estimators for the composite finite-sum problem~\eqref{eq:intro_formulation}. Specifically,
\begin{enumerate}
\item Inspired by the first-order Jacobian sketching scheme~\cite{gower2021stochastic}, the \AlgAbbrv{} framework maintains an incrementally updated Jacobian estimator, and constructs gradient estimator using both historical information stored in the Jacobian estimator and zeroth-order gradient at the current iterate. By adopting different update strategies of the Jacobian estimator, a variety of variance-reduced zeroth-order algorithms can be derived from this framework, covering some existing methods and introducing several new ones. Notably, it admits a pure $2$-point zeroth-order algorithm that does not require any intermittent batch sampling, and a memory-efficient block mini-batch algorithm, as its specific implementations.
\item Moreover, we establish a comprehensive general convergence result for our \AlgAbbrv{} variance reduction framework, for both strongly-convex, convex and non-convex settings. For some specific implementations including the aforementioned $2$-point one, we also provide concrete oracle complexity results that matches first-order algorithms up to an order of $O(d)$.
\end{enumerate}

Table~\ref{tb:compare} compares our \AlgAbbrv{} framework with existing relevant works. Further discussions are as follows: As mentioned before, \cite{huang2019faster} also proposed a $2$-point zeroth-order algorithm similar to ours in convex settings. Compared to~\cite{huang2019faster}, our analysis does not require the assumption of bounded variance and also achieves better result. The convergence result for the non-strongly convex case is also a new one, as most previous works focus on either the strongly convex case or the non-convex case, with the exception of~\cite{ji2019improved}. However, \cite{ji2019improved} only considered the unconstrained case and imposed boundedness of the iterates as an assumption even in the unconstrained setting.

We have also conducted several numerical experiments to demonstrate the effectiveness of our algorithms.

\paragraph{Notations.} For each positive integer $k$, we denote $[k]\defeq\{1,\ldots,k\}$. The $k\times k$ identity matrix will be denoted by $I_k$; we sometimes drop the subscript $k$ if the dimension is clear from the context. The real vector whose entries are all $1$ will be denoted by $\mathbf{1}$. For a finite set $S$, we use $|S|$ to denote its total number of elements. For any real matrices $X,Y$ of the same size, we denote $\langle X,Y\rangle\defeq\tr(X^TY)$; for $x,y\in\mathbb{R}^p$, we simply have $\langle x,y\rangle=x^Ty$. The $\ell_2$ norm on $\mathbb{R}^d$ will be denoted by $\|x\|_2\defeq \sqrt{\langle x,x\rangle}$. For a real matrix $A=[A_{ij}]$, its Frobenius norm is denoted by $\|A\|_F\defeq\sqrt{\langle A,A\rangle}=\big(\sum_{i,j}A_{ij}^2\big)^{1/2}$. The unit sphere in $\mathbb{R}^d$ is denoted by $\mathbb{S}^{d-1}=\setv*{x\in\mathbb{R}^d}{\|x\|_2=1}$.

 \section{Problem Formulation}
In this paper, we focus on solving the following optimization problem 
\begin{equation}
  \label{eq:formulation}
    \min_{x \in \mathbb{R}^{d}}\ h(x), \quad \text{where } h(x) = \frac{1}{n} \sum_{i=1}^{n} f_{i}(x) + \psi(x).
\end{equation}
Here each $ f_{i}:\mathbb{R}^d \rightarrow \mathbb{R}, i = 1, 2, \cdots, n$ is convex and differentiable; $\psi : \mathbb{R}^d \rightarrow \mathbb{R} \cup \{+\infty \}$ is convex and lower semicontinuous but possibly non-differentiable. We shall also denote $f=\frac{1}{n}\sum_{i=1}^n f_i$. We impose the following assumptions on the information that can be accessed by the decision maker with one oracle call:
\begin{itemize}
\item The decision maker is able to efficiently evaluate the following proximal operator
\[
\operatorname{prox}_{\alpha\psi}(x) \defeq
\argmin_{y\in\mathbb{R}^d}\left\{
\frac{1}{2}\|x-y\|_{2}^2 + \alpha\,\psi(y)
\right\}
\]
for any $\alpha>0$.

\item The decision maker may only choose one component function $f_{i}$ and obtain the value of $f_{i}$ by querying a black-box, or a zeroth-order oracle, which returns the value $f_{i}(x)$ whenever a point $x\in\mathbb{R}^d$ is fed into the black-box as its input. The decision maker does not have access to the derivatives of any order for any $f_{i}$.
\end{itemize}
We provide further discussions on the above two assumptions:
\begin{itemize}[leftmargin=0pt, itemindent=\parindent,labelsep=10pt]
\item The first assumption is standard in composite optimization and has been imposed in relevant studies such as~\cite{huang2019faster,pougkakiotis2023zeroth}. It allows us to incorporate simple non-smoothness into our problem settings and algorithm design. Two representative examples are given below.
\begin{enumerate}
    \item If we take $\psi$ to be the indicator function of a closed and convex set $\mathcal{X}\subseteq\mathbb{R}^d$:
\[
\psi(x) = I_{\mathcal{X}}(x) = \begin{cases}
0, & \text{if}\ x\in\mathcal{X}, \\
+\infty, & \text{otherwise},
\end{cases}
\]
Then the main problem~\eqref{eq:formulation} is equivalent to the constrained optimization problem
$
\min_{x\in\mathcal{X}} f(x)
$,
and the proximal operator reduces to the projection operator onto the convex set $\mathcal{X}$. We see that our problem formulation includes set-constrained optimization as a special case.
\item Another practical example is the lasso regression problem~\cite{tibshirani1996lasso}, in which each $f_{i}(x)$ would be the loss function of linear or logistic regression corresponding to individual data points, while $\psi(x) = \lambda \lVert x\rVert_{1}$ for some $\lambda > 0$ is a non-smooth regularization term that enforces sparsity on the solution $x$.
\end{enumerate}

\item The second assumption, as we have shown in Section~\ref{section:introduction}, is relevant to many problems in areas such as signal processing and machine learning, and necessitates the development of a zeroth-order optimization algorithm. To overcome the difficulty of lacking gradient information, different kinds of zeroth-order gradient estimators have been proposed as substitutes of first-order gradient. Below we highlight two common choices:
    \begin{enumerate}
        \item For each $f_i$, we can construct the 2-point estimator with random perturbation direction $u \in \mathbb{R}^d$ and smoothing radius $\beta > 0$:
        \begin{equation}\label{eq:fm_2p}
            \widehat \nabla_u f_i(x; \beta) \defeq \frac{f_i(x+ \beta u) - f_i(x)}{\beta}u.
        \end{equation}
        With a unit vector $u$, it is shown in Lemma \ref{lm:direc_bias} that $\widehat \nabla_{u} f_{i}(x ; \beta) \approx u^{T} u f_{i}(x)$, i.e., $\widehat \nabla_{u} f_{i}(x ; \beta)$ effectively estimates the directional derivative. Moreover, it has been established in~\cite{nesterov_random_2017} that, when $u$ is sampled from $\mathcal{N}(0, I_d)$ and $\beta$ is small enough, \eqref{eq:fm_2p} provides an approximately unbiased estimator of the gradient $\nabla f_i(x)$, while its variance is roughly proportional to the squared magnitude of the gradient:
        \begin{equation}\label{eq:2_point_bias_var}
        \mathbb{E}\!\left[\widehat\nabla_u f_i(x; \beta)\right] = \nabla f_i(x) + O(\beta), \quad \operatorname{Var}\!\left(\widehat\nabla_u f_i(x; \beta)\right) = \Theta\!\left(d\|\nabla f_i(x)\|_{2}^2\right) + O(\beta^2).
        \end{equation}
        For finite-sum composite problems, this implies that the variance of 2-point estimators will in general not vanish near the optimal point, forcing us to employ diminishing step sizes that will in turn slow down the convergence rate.
        
    \item Another method to construct zeroth-order estimators is to sample along every coordinate direction, resulting in the following $(d+1)$-point estimator:
        \begin{equation}\label{eq:fm_dp}
            \widehat \nabla f_i(x; \beta) \defeq \sum_{j=1}^{d} \frac{f_i(x+ \beta e_j) - f_i(x)}{\beta}e_j,
        \end{equation}
        where $e_j$ is the $j$-th standard basis vector in $\mathbb{R}^d$. This deterministic estimator has no variance and produces fairly accurate approximation of $\nabla f_i(x)$ when $f_i$ is smooth and $\beta$ is sufficiently small. However, its computation will require $d+1$ samples, which can be problematic when $d$ is large.
    \end{enumerate}

In this work, we shall propose a general framework to construct variance-reduced zeroth-order gradient estimators, to address the aforementioned issues.
\end{itemize}

We also make the following technical assumption that will facilitate convergence analysis of our algorithm:
\begin{assumption}
\label{as:smooth}
Each function $f_i$ is $L$-smooth on $\mathbb{R}^d$ for some $L>0$, i.e.,
\[
\|\nabla f_{i}(x)-\nabla f_{i}(y)\|_{2}\leq L\|x-y\|_{2},\qquad\forall x,y\in\mathbb{R}^d.
\]
\end{assumption}

Note that Assumption~\ref{as:smooth} implies $f=\frac{1}{n}\sum_{i=1}^n f_i$ is $L$-smooth.

 \section{Algorithm}
\label{section:algorithm}

In this section, we present our main algorithm, \emph{\AlgFullname{}} (\AlgAbbrv{}). This is a general algorithmic framework that allows various implementations.

As is mentioned in the previous section, if at each iteration we randomly sample a component function $f_i$ and a perturbation direction $u$ and plug the 2-point estimator $\widehat \nabla_u f_i(x; \beta)$ into the proximal gradient descent algorithm, then the variance of the estimator will not vanish around the optimal point, resulting in an inferior rate of $O(d \sigma^2/\epsilon^2)$, where $\sigma^2$ denotes an upper bound of $\|\nabla f_i(x) - \nabla f(x)\|_{2}^2$ for all $x$ and $i$. 
If we instead use the deterministic $(d+1)$-point estimator~\eqref{eq:fm_dp} to construct a fairly accurate estimate of $\nabla f(x)$, then since $f$ is the average of $n$ component functions for our finite-sum problem~\eqref{eq:formulation}, we will need $O(nd)$ zeroth-order samples per iteration, which makes the algorithm very inflexible for large scale problems.

To overcome the disadvantages of the above two choices of zeroth-order estimators, we aim to design a stochastic estimator $g_k$ whose variance vanishes as the iterate $x_k$ approaches an optimal point. This can be achieved by adopting variance reduction techniques.
We now present the details of the design of our variance-reduced gradient estimator $g_k$.

\paragraph{Jacobian Estimator $J_k$}
For each iteration $k$, our algorithm keeps a tracking Jacobian estimator $J_{k}$ that groups the gradient information of all component functions in a $d \times n$ matrix, its $n$ columns being estimates of the gradients of the $n$ component functions $\nabla f_{i},\,i \in[n]$. For notational simplicity, we define the vector-valued function $F: \mathbb{R}^{d} \rightarrow \mathbb{R}^{1\times n}$ by arranging the $n$ component functions into a row vector:
\begin{equation*}
  F(x) \defeq \begin{bmatrix} f_{1}(x) & f_2(x) & \cdots & f_{n}(x) \end{bmatrix}\in \mathbb{R}^{1\times n},
\end{equation*}
and denote its Jacobian at $x$ by $\nabla F(x)=[\partial f_j/\partial x_i]\in\mathbb{R}^{d\times n}$. Then we hope to achieve $J_{k}\approx \nabla F(x_{k})$ by updating $J_{k}$ using zeroth-order gradient estimators.

However, since by each 2-point estimator we can only obtain approximate directional gradient information of one component function along one direction, fully updating $J_{k}$ requires computing zeroth-order gradient estimators along $d$ coordinate directions for all $n$ component functions, resulting in $O(nd)$ oracle calls per iteration, which can be costly when $nd$ is large. So instead, at each iteration $k$, our algorithm randomly selects one part of $J_{k}$ to update, and leave the remaining parts unchanged.

Specifically, at each iteration $k$, our algorithm randomly generates a set $\mathcal{S}_k \subseteq [n]\times \mathbb{R}^d$. Each element of $\mathcal{S}_k$ is an ordered pair of the form $(i, u)$ and corresponds to one zeroth-order estimator $\widehat \nabla_u f_i(x_k; \beta_k)$ defined by~\eqref{eq:fm_2p}, with $i \in [n]$ specifying the index of an component function $f_i$, and $u$ being the perturbation direction. Then, we only update the part of $J_k$ indicated by these pairs in $\mathcal{S}_k$:
\label{eq:alg_J_update}
    \begin{equation}\label{eq:alg_J_update_1}
        J_{k+1}^{(i)} = 
        \begin{cases}
          J_{k}^{(i)} + \sum_{u:(i, u) \in \mathcal{S}_k} \!\left(\widehat \nabla_{u} f_{i} (x_{k};\beta_k) - u u^{T} J_{k}^{(i)}\right), & \text{if }{i} \in \mathcal{I}_{k}, \\
          J_{k}^{(i)}, & \text{otherwise},
        \end{cases}
    \end{equation}
where $J_k^{(i)}$ denotes the $i$-th column of $J_k$,  and $\mathcal{I}_k \defeq \setv*{i}{\exists u\text{ s.t. }(i, u)\in \mathcal{S}_k}$ collects the indices that have appeared as the first entries of order pairs in $\mathcal{S}_k$. In other words, for every $(i, u) \in \mathcal{S}_k$, we set the projection of $J_{k+1}^{(i)}$ along direction $u$ to be $\widehat \nabla_u f_i(x_k;\beta_k)$, which approximates the true directional gradient $\langle\nabla f_i(x_k),u\rangle u=uu^T \nabla f_i(x_k)$.

Intuitively, supposing that $x_k$ has approximately converged, with each update~\eqref{eq:alg_J_update_1}, $J_k$ will get closer to the true Jacobian as long as the choice of $\mathcal{S}_k$ ensures a sufficient proportion of $J_k$ to be updated on average; the precise conditions on $\mathcal{S}_k$ will be stated in Assumption~\ref{as:algorithm_rand_var} later.

We further introduce the following linear operator that denotes the combined left and right projection:
    \begin{equation}
      \label{eq:alg_p_def}
        P_k(A) \defeq \sum_{(i, u)\in \mathcal{S}_k} uu^T A e_i e_i^T, \quad \forall A\in \mathbb{R}^{d\times n}.
    \end{equation}
    so that~\eqref{eq:alg_J_update_1} can be more succinctly reformulated as
    \begin{equation}\label{eq:alg_J_update_2}
        J_{k+1} =  J_{k} +
  G_{\beta_k}(x_{k}; \mathcal{S}_{k})
  - P_k(J_k),
    \end{equation}
    where
    \begin{equation}
    \label{eq:biG_def}
          G_{\beta_k}(x_k; \mathcal{S}_{k}) =  
  \sum_{(i, u) \in \mathcal{S}_{k}}
  \widehat \nabla_{u} f_{i}(x_{k} ; \beta_{k}) e_{i}^{T}
    \end{equation}
    is a $d \times n$ matrix. As will be shown later by Lemma~\ref{lm:projected_bias}, we have $ G_{\beta_k}(x_k; \mathcal{S}_{k}) \approx P_k (\nabla F(x_k))$.
    
    To incorporate further flexibility into our algorithm framework, we shall also introduce a Bernoulli random variable $\omega_k$ that ``masks'' the update of $J_k$, so that when $\omega_k = 0$, $J_k$ is not updated; when $\omega_k = 1$, $J_k$ is updated according to~\eqref{eq:alg_J_update}. In other words, we modify~\eqref{eq:alg_J_update} as
    \begin{subequations}
    \label{eq:alg_J_update_final}
    \begin{equation}
  J_{k+1}^{(i)} = 
        \begin{cases}
          J_{k}^{(i)} + \sum_{u:(i, u) \in \mathcal{S}_k} \!\left(\widehat \nabla_{u} f_{i} (x_{k};\beta_k) - u u^{T} J_{k}^{(i)}\right), & \text{if }\omega_k=1\text{ and }i\in\mathcal{I}_k,\\
          J_{k}^{(i)}, & \text{otherwise}.
        \end{cases}
    \end{equation}
    or, written more compactly,
\begin{equation}
J_{k+1} = J_k+\omega_k(G_{\beta_k}(x_{k};  \mathcal{S}_{k})
  - P_k(J_{k})).
\end{equation}
\end{subequations}

We now state the conditions on the random quantities $\mathcal{S}_{k}$ and $\omega_{k}$.
    \begin{assumption}\label{as:algorithm_rand_var}
      The following conditions hold:
      \begin{itemize}
        \item $(\mathcal{S}_k,\omega_k)$ for $k=0,1,2,\ldots$ are i.i.d.
         \item For any step $k$ and component index $i \in [n]$, the set of sampled directions associated with $i$, defined as $\mathcal{U}_{k, i} \defeq \setv*{u}{(i, u) \in \mathcal{S}_k }$, consists of orthonormal vectors. Specifically, for any $u \in \mathcal{U}_{k, i}$ we have $\|u\|_2 = 1$, and for any distinct $u, v \in \mathcal{U}_{k, i}$ we have $u^\top v = 0$.
          \item There exists a constant $\sigma \in(0,1)$ such that
        \begin{equation}
        \label{eq:alg_omega_req1}
          \mathbb{E}[\omega_{k} P_{k}(A)] = \sigma A
        \end{equation}
        for any $A\in\mathbb{R}^{d\times n}$ for all $k$.
      \end{itemize}
    
    \end{assumption}
    The second requirement on orthogonality is introduced to facilitate a tighter error analysis by eliminating cross-terms in the estimation variance. The third requirement \eqref{eq:alg_omega_req1} states that at each iteration, a constant fraction $\sigma $ of $J_k$ will be updated on average. Note that we allow $\mathcal{S}_k$ to be dependent on $\omega_k$.
  
\begin{remark}
  \label{rm:alg_sampling}
  To satisfy the orthonormality requirement in Assumption~\ref{as:algorithm_rand_var}, we propose the following general approach for constructing the set $\mathcal{S}_k$. At each iteration, first generate an orthogonal matrix $Q_k \in \mathbb{R}^{d \times d}$. Then, sample $|\mathcal{S}_k|$ \emph{distinct} pairs $(i, j)$ from the set $[n] \times [d]$. Finally, populate $\mathcal{S}_k$ with elements of the form $(i, Q_k^{(j)})$, where $Q_k^{(j)}$ denotes the $j$-th column of $Q_k$. Since the columns of $Q_k$ are mutually orthogonal and the indices $j$ associated with any single component $i$ are distinct, the requirement for $\mathcal{U}_{k,i}$ to consist of orthonormal vectors is satisfied. Regarding the generation of the orthogonal matrix $Q_k$, we propose two primary schemes:
  \begin{enumerate}
    \item \textbf{Coordinate Directions:} By setting $Q_k = I$ for all $k \geq 0$, the Jacobian estimator $J_k$ is always updated along standard coordinate directions. This approach induces no additional cost as it avoids matrix decompositions.
    
    \item \textbf{Spherical Random Directions:} We can generate a random orthogonal matrix $Q_k$ at each iteration. This is achieved by generating a random matrix $Z_k \in \mathbb{R}^{d \times d}$ with i.i.d. standard Gaussian entries and performing a QR decomposition $Z_k = Q_k R$. While this requires more computation than the coordinate scheme, the resulting columns $Q_k^{(j)}$ are uniformly distributed over the unit sphere $\mathbb{S}^{d-1}$. This allows for a potentially tighter bound on the bias of the zeroth-order estimator by leveraging the zeroth-order smoothing technique in~\cite{nesterov_random_2017}.
  \end{enumerate}
\end{remark}

\paragraph{Gradient Estimator $g_k$}
Now we can use $J_k$ as a reference to construct a variance-reduced gradient estimator $g_k$. We start by sampling another set $\mathcal{R}_{k} \subseteq [n]\times \mathbb{R}^{d}$, with fixed size $\lvert \mathcal{R}_{k}\rvert = R, \forall k \geq 1$.
To construct the $R$ elements of $\mathcal{R}_{k}$, we first sample $R$ distinct indices 
$i_{k}^{1}, i_{k}^{2}, \ldots, i_{k}^{R} \in [n]$ uniformly without replacement. 
We then sample $R$ independent random directions 
$u_{k}^{1}, u_{k}^{2}, \ldots, u_{k}^{R} \in \mathbb{R}^{d}$, each drawn from either the uniform distribution over the standard basis $\{e_1,\ldots,e_d\}$, or the uniform distribution over the unit sphere $\mathbb{S}^{d-1}$. We then let
\[
\mathcal{R}_{k}= \left\{(i_{k}^{1}, u_{k}^{1}),\, (i_{k}^{2}, u_{k}^{2}),\, \ldots,\, (i_{k}^{R}, u_{k}^{R})\right\}.
\]
Although both $\mathcal{R}_{k}$ and $\mathcal{S}_{k}$ consist of index--direction pairs, the 
sampling requirements for $\mathcal{R}_{k}$ are more restrictive, as its elements must be drawn 
i.i.d.\ from specific prescribed distributions.

  Given $\mathcal{R}_{k}$, our algorithm employs the following construction of the gradient estimator $g_k$:
    \begin{equation}
    \label{eq:alg_gk_def}
g_{k} 
  = \frac{1}{n}J_k\cdot\mathbf{1} + \frac{d}{R}\sum_{(i, u) \in\mathcal{R}_k}
  \left(
  \widehat\nabla_u f_i(x_k;\beta_k)
  -uu^T J_k^{(i)}
  \right).
    \end{equation}
    By our sampling strategy of the random indices and directions in $\mathcal{R}_k$, we have
    \begin{equation*}
      \mathbb{E}\Econd*{d u u^{T} J_{k}^{(i)}}{x_{k}, J_{k}} = \frac{1}{n} J_{k} \cdot \mathbf{1},
    \end{equation*}
    and because the elements in $\mathcal{R}_{k}$ are i.i.d., we can conclude that 
    \begin{equation*}
      \mathbb{E}\Econd*{g_{k}}{x_{k}, J_{k}} 
      = \mathbb{E}\Econd*{\widehat \nabla_{u} f_{i}(x_{k} ; \beta_{k})}{x_{k}, J_{k}} 
      \approx \nabla f(x), 
    \end{equation*}
    where the last approximation follows from~\eqref{eq:2_point_bias_var}.
    In other words, $g_{k}$ serves as an approximately unbiased stochastic estimator of $f$ at $x_{k}$.
    Moreover, given the approximation $\widehat \nabla_{u} f_{i}(x_{k} ; \beta_{k}) \approx  u u^{T} f_{i}(x_{k}) $, we further derive that
    \begin{align*}
    \operatorname{Var}\Vcond*{g_k}{x_k,J_k} &= \operatorname{Var}\Vcond*{
      \frac{d}{R} \sum_{(i, u) \in\mathcal{R}_k}
  \left( \widehat\nabla_u f_i(x_k;\beta_k) -uu^T J_k^{(i)} \right)
    }{x_k,J_k}\\
  &\approx \frac{d}{R} \operatorname{Var}\Vcond*{ u u^{T} (\nabla f_{i}(x^{k}) - J_{k}^{(i)})}{x_k,J_k}
  \\&= \frac{d}{R} \operatorname{Var}\Vcond*{ u u^{T} (\nabla F(x^{k}) - J_{k}) e_{i} e_{i}^{T}}{x_k,J_k}.
    \end{align*}
indicating that the variance of $g_k$ will be small as long as $J_k$ tracks the real Jacobian matrix $\nabla F(x_k)$ relatively accurately. The update rule for the iterate $x_k$ is then given by

\begin{equation}
\label{eq:alg_prox_grad}
x_{k+1} = \operatorname{prox}_{\alpha\psi}(x_k-\alpha g_k),
\end{equation}
where $\alpha>0$ is the step size.

Wrapping up all the ingredients, we complete the design of our proposed algorithm, \AlgFullname{}, which is summarized in Algorithm~\ref{alg:zips}.

\begin{algorithm}[t]
\DontPrintSemicolon
\SetAlgoLined
\LinesNumbered
\SetKwInput{KwParam}{Parameters}
\SetKwInput{KwInit}{Initialization}
\KwParam{Stepsize $\alpha > 0$; smoothing radius $\beta_k>0$ for $k\geq 0$, batch size $R \in [nd]$.}
\KwInit{ $x_{0}\in\mathbb{R}^d$; $J_{0} \in\mathbb{R}^{d\times n}$.}
\For{$k=0,1,2,\ldots$}{
  Generate a Bernoulli random variable $\omega_k$.\;
  Generate $\mathcal{S}_k\subseteq[n]\times\mathbb{R}^d$ that satisfies Assumption~\ref{as:algorithm_rand_var}.\;
  Sample
  $i_{k}^{1}, \ldots, i_{k}^{R}$ uniformly from $[n]$ without replacement. \;
  Sample $u_{k}^{1}, \ldots, u_{k}^{R}$ independently and uniformly from $\{e_1,\ldots,e_d\}$ or $\mathbb{S}^{d-1}$.\;
$
\mathcal{R}_{k}= \left\{(i_{k}^{1}, u_{k}^{1}),\, (i_{k}^{2}, u_{k}^{2}),\, \ldots,\, (i_{k}^{R}, u_{k}^{R})\right\}$. \;
  \vspace{3pt}
  Construct the gradient estimator $g_k$ by
  \[
  \displaystyle g_{k} 
  = \frac{1}{n}J_k\cdot\mathbf{1} + \frac{d}{R}\sum_{(i, u) \in\mathcal{R}_k}
  \left(\widehat\nabla_u f_i(x_k;\beta_k)
  -uu^T J_k^{(i)}
  \right).
  \]\;\vspace{-12pt}
  Update $x_{k+1}$ by
  $
  \displaystyle x_{k+1} = \operatorname{prox}_{\alpha \psi} (x_{k} - \alpha g_{k})$.\;\vspace{3pt}
  For each $i\in[n]$, update the $i$-th column of $J_{k+1}$ by
       \[
  J_{k+1}^{(i)} = 
        \begin{cases}
          J_{k}^{(i)} + \sum_{u:(i, u) \in \mathcal{S}_k} \!\left(\widehat \nabla_{u} f_{i} (x_{k};\beta_k) - u u^{T} J_{k}^{(i)}\right), & \text{if }\omega_k=1\text{ and }{i} \in \mathcal{I}_{k},\\
          J_{k}^{(i)}, & \text{otherwise}.
        \end{cases}\]
    }
\caption{\AlgFullname{}}\label{alg:zips}
\end{algorithm}

\subsection{Examples of Implementations}
\label{subsection:impl}

Our proposed \AlgAbbrv{} framework is quite general and provides much flexibility in the choice of the probability distributions of $\mathcal{S}_k$ and $\omega_k$ . Here we give some examples of specific implementations.

    \paragraph{Implementation I}
    We set $\omega_{k} = 1$ and fix $| \mathcal{S}_k | = R$ for all $k$, which implies that a partial update of $J_k$ occurs at every iteration. To construct $\mathcal{S}_k$, we follow the general procedure described in Remark~\ref{rm:alg_sampling}: we first sample $R$ distinct pairs $(i, j)$ uniformly from $[n] \times [d]$, and then map these indices to direction vectors using an orthogonal matrix $Q_k$. By employing either the Coordinate Directions or Spherical Random Directions scheme from Remark~\ref{rm:alg_sampling}, the condition \eqref{eq:alg_omega_req1} is automatically satisfied with $\sigma = \frac{R}{nd}$. Furthermore, this implementation requires only $O(R)$ function evaluations per iteration since $\lvert \mathcal{S}_k\rvert = \lvert \mathcal{R}_k\rvert = R$.

    This strategy of generating $(\mathcal{R}_k,\mathcal{S}_k,\omega_k)$ can be viewed as a combination of SAGA~\cite{defazio2014saga} and SEGA~\cite{hazely2018sega}. Unlike existing algorithms such as ZO-SVRG~\cite{ji2019improved}, SPIDER-SZO~\cite{fang2018spider} or ZO-DVR~\cite{di2024double}, here the update of $J_{k}$ is purely incremental and no full-batch sampling (i.e., $R=nd$) is necessary. The mini-batch size $R$ can be as small as $1$, in which case only two oracle calls are needed at each iteration, making the algorithm favorable for high-dimensional problems.

    \paragraph{Implementation II} We let $\omega_k \sim \mathrm{Bernoulli}(p)$, with $p = \min\{R/d, 1\}$. If $R < d$, the update of $J_k$ is performed with probability $R/d$; otherwise, the update happens at each step. When $\omega_k = 1$, we sample a subset of component indices $\mathcal{I}_k$ from $[n]$ uniformly at random, with size $\lvert \mathcal{I}_k \rvert = \lceil R/pd \rceil$ (note that $\lvert\mathcal{I}_k\rvert=1$ if $R<d$). We then construct the set $\mathcal{S}_k$ using the columns of the orthogonal matrix $Q_k$ (as defined in Remark~\ref{rm:alg_sampling}):
\[
    \mathcal{S}_k = \mathcal{I}_k \times \left\{Q_k^{(1)}, Q_k^{(2)}, \ldots, Q_k^{(d)}\right\}.
\]
This means we select a subset of columns of $J_k$ and update them along all $d$ orthogonal directions provided by $Q_k$. In this setting, if $R < d$, \eqref{eq:alg_omega_req1} is satisfied with $\sigma = \frac{R}{nd}$; if, however, $R \geq d$, then we have $\sigma = \frac{\lceil R/d \rceil}{n}$. More specifically, when $\omega_{k} = 0$, we have $J_{k+1} = J_{k}$. When $\omega_{k} = 1$, we use $O(d)$-point full batch zeroth-order estimators to fully update the selected columns of $J_k$:
\[
   J_k^{(i)} = \widehat \nabla f_i(x_{k}; \beta_k), \quad \forall i \in \mathcal{I}_k.
\]
This update strategy resembles the SAGA~\cite{defazio2014saga} algorithm, employing conditional updates or mini-batching depending on $p$. A simple computation shows that the average number of oracle calls per iteration is $O(R)$, matching Implementation I.

\paragraph{Implementation III} We let $\omega_k \sim \mathrm{Bernoulli}({R}/{nd})$ and set $\mathcal{S}_k$ to include all pairs of component indices and orthogonal directions:
\[
    \mathcal{S}_k = [n] \times \{Q_k^{(1)}, Q_k^{(2)}, \ldots, Q_k^{(d)}\}.
\]
In this case, the update of $J_k$ is performed with probability $\frac{R}{nd}$. When the update occurs, we compute an $O(d)$-point zeroth-order gradient estimator for every component function, resulting in a complete update of the Jacobian matrix $J_k$:
\[
J_k^{(i)} = \widehat \nabla f_i(x_{k}; \beta_k), \quad \forall i \in [n].
\]
Therefore \eqref{eq:alg_omega_req1} is satisfied with $\sigma = \frac{R}{nd}$. This scheme is similar to the ZO-SVRG~\cite{ji2019improved} algorithm. However, our approach uses a probabilistic update for $J_{k}$ rather than a double-loop structure, rendering the algorithm more flexible and easier to analyze. Although a full update of $J_k$ requires $O(nd)$ oracle calls, it occurs infrequently, ensuring that the average number of oracle calls per iteration remains $O(R)$.

    \subsection{A Memory-Efficient Implementation}
    \label{subsection:mem_eff}
     A straightforward implementation of Algorithm~\ref{alg:zips} requires storing the matrix $J_{k}$ that contains $nd$ entries, which can be unfavorable in situations where memory is a bottleneck. In this subsection, we present another implementation of Algorithm~\ref{alg:zips} that balances memory complexity and maximum batch size.

     In this implementation, we split the $d\times n$ matrix $J_k$ into $B$ blocks, denoted by disjoint sets of indices $\mathcal{T}_{1}, \mathcal{T}_{2}, \ldots, \mathcal{T}_{B}$ satisfying $\bigcup_{l=1}^{B} \mathcal{T}_{l} = [n]\times [d]$, so that $(i, j) \in \mathcal{T}_{l}$ means the $(i, j)$-th entry falls in the $l$-th block. Here we will not explicitly store $J_{k}$, but will instead store $\tilde g_{k} = \frac{1}{n} J_{k} \mathbf{1}$ and a snapshot point $\tilde x^{l}_{k}$ for each block $\mathcal{T}_{l}$, which will be updated probabilistically. Note that since we store a $d$-dimensional vector $\tilde x^{l}_{k}$ for each block, to actually save memory, we require that $B < n$. The update process is again controlled by the Bernoulli variable $\omega_{k}$:
     \begin{itemize}
         \item If $\omega_{k} = 0$, no update to $\tilde g_{k}$ and any $\tilde x^{l}_{k}$ would occur, and $g_{k}$ is computed by
     \begin{equation}
       \label{eq:alg_mem_eff_g}
  g_{k} 
  = \tilde g_{k}
  + \frac{d}{R}\sum_{(i, e_j) \in\mathcal{R}_k}
  \left(\widehat\nabla_{e_{j}} f_i(x_k;\beta_k)
  - \sum_{l=1}^{B}\mathbb{1}_{\mathcal{T}_{l}}(i,j)\cdot \widehat \nabla_{e_{j}} f_{i}(\tilde x^{l}_{k};\beta_k)
  \right).
     \end{equation}
     Here the members of the random set $\mathcal{R}_k=\left\{(i_{k}^{1},u_{k}^{1}),\ldots,(i_{k}^{R},u_{k}^{R})\right\}$ are generated as follows: (i) Sample $R$ distinct indices 
$i_{k}^{1}, \ldots, i_{k}^{R} \in [n]$ uniformly without replacement; (ii) sample $R$ independent vectors $u_{k}^{1}, \ldots, u_{k}^{R}$ uniformly from the standard basis $\{e_1,\ldots,e_d\}$. Note that as opposed to the previous implementations, we require the directions in $\mathcal{R}_{k}$ to be coordinate directions. We use $\mathbb{1}_{\mathcal{T}_{l}}(i,j)$ to denote the function that equals $1$ if $(i,j)\in\mathcal{T}_l$ and equals $0$ otherwise. 
     
     \item If $\omega_{k} = 1$, we randomly pick $l_k\in[B]$, update $\tilde x^{l_k}_{k+1}$ to be the current iterate $x^{k}$, and update $\tilde g_{k+1}$ by
     \begin{equation*}
       \tilde g_{k+1} = \tilde g_{k} + \frac{1}{\lvert \mathcal{T}_{l_k}\rvert} \sum_{(i, j) \in \mathcal{T}_{l_k}}\!\!\big(\widehat \nabla_{e_{j}} f_{i}(x_{k};\beta_k) - \widehat \nabla_{e_{j}} f_{i}(\tilde x^{l_k}_{k};\beta_k)\big),
     \end{equation*}
     with $g_{k}$ being given by \eqref{eq:alg_mem_eff_g}
     \end{itemize} 
     The detailed steps are also given in Algorithm~\ref{alg:zips_mem_eff}.
     
  \begin{algorithm}[t]
    \DontPrintSemicolon
    \SetAlgoLined
    \LinesNumbered
    \SetKwInput{KwParam}{Parameters}
    \SetKwInput{KwInit}{Initialization}
    \KwParam{Stepsize $\alpha > 0$; smoothing radius $\beta_k>0$ for each $k\geq 0$; updating probability $ 0 < p < 1$; batch size $R\in [nd], R\leq nd/B$.}
    \KwInit{ $x_{0}\in\mathbb{R}^d$ ; disjoint sets of indices $\mathcal{T}_{1}, \mathcal{T}_{2}, \ldots, \mathcal{T}_{B}$; $\tilde x^{1}_{0} = \tilde x^{2}_{0} = \cdots = \tilde x^{B}_{0} = x_{0}$; $\tilde g_{0} \in\mathbb{R}^d$.}
    \For{$k=0,1,2,\ldots$}{
      Generate $\omega_{k} \sim \operatorname{Bernoulli}(p)$ and the random set $\mathcal{R}_k$ as described in Section~\ref{subsection:mem_eff}.\;
      \vspace{3pt}
      $\displaystyle g_{k} 
  = \tilde g_{k}
  + \frac{d}{R}\sum_{(i, e_{j}) \in\mathcal{R}_k}
  \left(\widehat\nabla_{e_{j}} f_i(x_k;\beta_k)
  - \sum_{l=1}^{B}\mathbb{1}_{\mathcal{T}_{l}}(i,j)\cdot \widehat \nabla_{e_{j}} f_{i}(\tilde x^{l}_{k};\beta_k)
  \right) $ \;
  \vspace{3pt}
      \uIf{$\omega_k = 1$}{
      \vspace{3pt}
      Pick a random index $l_k \in [B]$ uniformly at random.\;
      \vspace{3pt}
    $\displaystyle \tilde g_{k+1} = \tilde g_{k} + \frac{1}{\lvert \mathcal{T}_{l_k}\rvert} \sum_{(i, j) \in \mathcal{T}_{l_k}}
    \big(\widehat \nabla_{e_{j}} f_{i}(x_{k};\beta_k) - \widehat \nabla_{e_{j}} f_{i}(\tilde x^{l_k}_k;\beta_k)\big)$.\;
      \vspace{3pt}
      $\displaystyle\tilde x_{k+1}^{m} = 
      \begin{cases}
      x_{k}, & \text{for }m=l_k, \\
      \tilde x_{k}^{m}, & \text{for }m\in[B]\backslash\{l_k\}.
             \end{cases}$\;
      }
      \Else{
      $\tilde x_{k+1}^{m} = \tilde x_{k}^{m}, \ m=1, \ldots,B$.\;
      \vspace{3pt}
      $\tilde g_{k+1} = \tilde g_{k}$.
      }
      \vspace{3pt}
      $\displaystyle x_{k+1} = \text{prox}_{\alpha \psi} (x_{k} - \alpha g_{k})$.\;
    }
    \caption{Memory-Efficient \AlgAbbrv{}}\label{alg:zips_mem_eff}
  \end{algorithm}

  As a special implementation of Algorithm~\ref{alg:zips}, Algorithm~\ref{alg:zips_mem_eff} only needs to store $B$ vectors $\tilde x^{1}_{k},\tilde x^{2}_{k}, \ldots, \tilde x^{B}_{k}$ plus $\tilde g_{k}$, the running average for their gradients, which results in a memory complexity of $\Theta(Bd)$. Furthermore, when the partitions $\mathcal{T}_{1}, \mathcal{T}_{2}, \ldots, \mathcal{T}_{B}$ are roughly of the same size, the maximum batch size is $\Theta(nd/B)$, so by adjusting the partition number $B$ properly, one can achieve a balanced trade-off between memory complexity and maximum batch size. This makes the algorithm more flexible under some scenarios where memory can be a bottleneck.

 \section{Main Results}
\newcounter{betacounter}
\setcounter{betacounter}{0}
\label{section:result}
In this section, we present convergence results of Algorithm~\ref{alg:zips} under different settings and parameter regimes. The proofs of these results will be postponed to Section~\ref{section:proof}.
Before presenting our main results, we define the following quantity to make our proof cleaner:
\begin{equation*}
  \nu \defeq \mathbb{E}\!\left[ \omega_{k} \lvert \mathcal{S}_{k}\rvert\right]
\end{equation*}
Note that $\nu$ is independent of the choice of $k$ since $( \mathcal{S}_k, \omega_k), k = 1, 2, \ldots$ are i.i.d.\ under Assumption~\ref{as:algorithm_rand_var}.

As for the value of $\sigma$ and $\nu$ for the implementations mentioned before, we provide the following lemma:
\begin{lemma}\label{lm:param}
Consider any one of the three implementations in Section~\ref{subsection:impl} or the memory-efficient implementation in Section~\ref{subsection:mem_eff} with $p=BR/(nd)$, we have $R/(nd)\leq\sigma \leq 2R/(nd)$ and $\nu=nd\sigma$.
\end{lemma}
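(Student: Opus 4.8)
The plan is to split the statement into two essentially independent parts: the universal identity $\nu = nd\sigma$, which I would argue holds for \emph{any} sampling scheme satisfying Assumption~\ref{as:algorithm_rand_var}, and the quantitative two-sided bound on $\sigma$, which is checked implementation by implementation. Isolating the identity first is worthwhile because, once it is available, the value of $\nu$ in each case follows automatically from $\sigma$ with no extra work.

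For the identity I would regard $P_k$ from~\eqref{eq:alg_p_def} as a (random) linear operator on the $nd$-dimensional space $\mathbb{R}^{d\times n}$, endowed with the Frobenius inner product and the orthonormal basis $\{e_j e_i^T\}_{j\in[d],\,i\in[n]}$, and compute its operator trace. Applying $P_k$ to a basis element $e_j e_i^T$, the factor $e_i^T e_{i'}=\delta_{ii'}$ collapses the sum over $\mathcal S_k$ to the directions attached to column $i$, and a short calculation gives $\langle P_k(e_j e_i^T),\,e_j e_i^T\rangle = \sum_{u:(i,u)\in\mathcal S_k} u_j^2$. Summing over $j$ and invoking the unit-norm part of Assumption~\ref{as:algorithm_rand_var} ($\|u\|_2=1$, so $\sum_j u_j^2=1$) yields $\tr(P_k)=\sum_i |\mathcal U_{k,i}| = |\mathcal S_k|$. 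Taking expectation against $\omega_k$ gives $\tr(\mathbb{E}[\omega_k P_k]) = \mathbb{E}[\omega_k|\mathcal S_k|] = \nu$. On the other hand, \eqref{eq:alg_omega_req1} says exactly that $\mathbb{E}[\omega_k P_k]$ equals $\sigma$ times the identity operator on $\mathbb{R}^{d\times n}$, whose trace is $\sigma\cdot nd$. Equating the two traces gives $\nu = nd\sigma$, independent of the particular scheme.

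It then remains to compute $\sigma$ for each implementation and verify $R/(nd)\le\sigma\le 2R/(nd)$. The recurring simplification is the telescoping identity $\sum_{j=1}^d Q_k^{(j)}(Q_k^{(j)})^T = Q_k Q_k^T = I$, valid for any orthogonal $Q_k$. For Implementations~II and III, where whole columns are updated along all $d$ directions, this makes $P_k$ act on each selected column as the identity, so $P_k(A)$ reduces to a sum of exact columns $A e_i e_i^T$; taking expectation over the Bernoulli mask and the uniform component selection then reads off $\sigma$. For the memory-efficient scheme, $P_k(A)$ is the coordinate projection onto the chosen block $\mathcal{T}_{l_k}$, and averaging over the uniform block choice reconstructs $\tfrac1B A$ because the blocks partition $[n]\times[d]$; multiplying by the mask probability $p=BR/(nd)$ gives $\sigma=R/(nd)$. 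For Implementation~I (no mask, partial sampling of $R$ of the $nd$ index--direction slots), each slot is selected with probability $R/(nd)$, and summing the rank-one projections over all slots again invokes $Q_kQ_k^T=I$, giving $\sigma=R/(nd)$. In all these cases $\sigma=R/(nd)$ exactly, so the bounds hold trivially.

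The one delicate case---and the main obstacle---is Implementation~II with $R\ge d$, where $p=1$, $|\mathcal I_k|=\lceil R/d\rceil$, and the reduction above yields $\sigma=\lceil R/d\rceil/n$. The ceiling prevents $\sigma$ from equaling $R/(nd)$ exactly, so I must show the two-sided estimate survives: the lower bound is immediate from $\lceil R/d\rceil\ge R/d$, while for the upper bound I would use $R\ge d$ (equivalently $R/d\ge 1$) to write $\lceil R/d\rceil\le R/d+1\le 2R/d$, so that $\sigma=\lceil R/d\rceil/n\le 2R/(nd)$. Finally, since the identity $\nu=nd\sigma$ from the first step already pins down $\nu$ from $\sigma$ in every case, no separate verification of $\nu$ is required.
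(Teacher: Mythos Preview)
Your proposal is correct. Both the paper and you compute $\sigma$ implementation by implementation in essentially the same way, and you are more careful than the paper in explicitly verifying the upper bound $\sigma\le 2R/(nd)$ for Implementation~II with $R\ge d$ via $\lceil R/d\rceil\le R/d+1\le 2R/d$.

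Where you genuinely diverge is in the proof of $\nu=nd\sigma$. The paper restricts to the sampling constructions of Remark~\ref{rm:alg_sampling}, argues that for each of the listed implementations the inclusion probability $q=\mathbb{P}\bigl((i,Q_k^{(j)})\in\mathcal{S}_k\,\big|\,\omega_k=1\bigr)$ is the same for all $i,j$, and then computes $\sigma=q\,\mathbb{P}(\omega_k=1)$ and $\nu=\mathbb{P}(\omega_k=1)\cdot ndq$ separately, observing the two coincide up to the factor $nd$. Your trace argument is strictly more general: using only the unit-norm part of Assumption~\ref{as:algorithm_rand_var} you show $\tr(P_k)=|\mathcal{S}_k|$, whence $\nu=\mathbb{E}[\omega_k\tr P_k]=\tr(\mathbb{E}[\omega_k P_k])=\tr(\sigma\,\mathrm{Id})=nd\sigma$ for \emph{every} scheme satisfying Assumption~\ref{as:algorithm_rand_var}, not just the four listed ones. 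This is cleaner (no case analysis, no appeal to uniformity of $q$) and in fact proves a stronger statement than the lemma requires; the paper's approach, while more pedestrian, has the minor practical advantage of producing the explicit value of $q$ as a by-product of the same calculation used for $\sigma$.
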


\subsection{Strongly Convex Case}
We first consider the situation where each $f_i$ is convex and $f=\frac{1}{n} \sum_{i=1}^{n} f_{i}$ is $\mu$-strongly convex for some $\mu>0$.

The convergence results of \AlgAbbrv{} for the strongly convex case are summarized in the following theorem.
\begin{theorem}
  \label{tm:sc_main}
Suppose Assumptions~\ref{as:smooth} and \ref{as:algorithm_rand_var} hold, each $f_i$ is convex, and $f=\frac{1}{n}\sum_{i=1}^n f_i$ is $\mu$-strongly convex for some $\mu>0$. Let $x^\ast$ denote the optimal solution to~\eqref{eq:formulation}.
By setting the step size to be $\alpha = \frac{R}{2 L (36 d+R)}$, we have
  \refstepcounter{betacounter}
  \label{counter:sc_final}
  \begin{equation}
    \label{eq:sc_main}
    \mathbb{E}\!\left[\lVert x_{K} - x^{\ast}\rVert_{2}^{2}\right] 
    \leq 
    (1\!-\!\kappa)^{K}\!
    \left[\lVert x_{0} \!-\! x^{\ast}\rVert _{2}^{2}
    + \frac{8dR}{L^2n\sigma  (36 d\!+\!R)^2}
    \|J_0 \!-\! \nabla F(x^\ast)\|_F^2
    \right]
    + \sum_{k=1}^{K} (1\!-\!\kappa)^{K-k} C_{\thebetacounter}(\beta_{k}),
  \end{equation}
  for any positive integer $K$,
  where we denote $\kappa = \min\left\{\frac{\mu  R}{4 L (36 d+R)}, \frac{\sigma}{4} \right\}$ and
  \begin{align*}
    C_{\thebetacounter}(\beta_{k}) = \frac{\beta_k ^2 d R \left(d \left(32 \mu  \nu +36 L \sigma ^2+\mu  R \sigma ^2\right)+L R \sigma ^2\right)}{4 \mu  \sigma ^2 (36 d+R)^2}. 
  \end{align*}
  \end{theorem}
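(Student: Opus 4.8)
The plan is to construct a Lyapunov function coupling the squared distance to the optimum with the Frobenius-norm error of the Jacobian estimator, and to show it contracts geometrically at rate $\kappa$ up to an additive bias controlled by the smoothing radius. Concretely, I would work with
\[
\Phi_k \defeq \|x_k - x^\ast\|_2^2 + c\,\|J_k - \nabla F(x^\ast)\|_F^2
\]
for a constant $c>0$ fixed later, letting $\mathcal{F}_k$ denote the history up to and including $x_k$ and $J_k$. The target is a one-step inequality $\mathbb{E}[\Phi_{k+1}\mid\mathcal{F}_k]\leq(1-\kappa)\Phi_k+C_{1}(\beta_k)$; unrolling over $k=0,\ldots,K-1$ and using $\|x_K-x^\ast\|_2^2\leq\Phi_K$ then yields~\eqref{eq:sc_main}, with the initial Jacobian weight matching $c=\frac{8dR}{L^2 n\sigma(36d+R)^2}$.

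The first main step is a descent inequality for the iterate. Since $x^\ast=\operatorname{prox}_{\alpha\psi}(x^\ast-\alpha\nabla f(x^\ast))$ at optimality, nonexpansiveness of the proximal operator gives $\|x_{k+1}-x^\ast\|_2^2\leq\|x_k-x^\ast-\alpha(g_k-\nabla f(x^\ast))\|_2^2$. Expanding and taking $\mathbb{E}[\cdot\mid\mathcal{F}_k]$, I would use that $g_k$ is approximately unbiased, $\mathbb{E}[g_k\mid\mathcal{F}_k]=\nabla f(x_k)+O(\beta_k)$ (Lemma~\ref{lm:direc_bias}), combine the strong convexity of $f$ with co-coercivity for convex $L$-smooth functions to absorb the gradient-norm terms, and invoke the variance bound $\operatorname{Var}(g_k\mid\mathcal{F}_k)\leq\frac{d}{Rn}\|\nabla F(x_k)-J_k\|_F^2+O(\beta_k^2)$ sketched heuristically in Section~\ref{section:algorithm}. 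Splitting $\|\nabla F(x_k)-J_k\|_F^2\leq 2\|\nabla F(x_k)-\nabla F(x^\ast)\|_F^2+2\|J_k-\nabla F(x^\ast)\|_F^2$ and bounding the first summand by $2nL^2\|x_k-x^\ast\|_2^2$ via $L$-smoothness produces a recursion whose right-hand side is a nonnegative combination of $\|x_k-x^\ast\|_2^2$, $\|J_k-\nabla F(x^\ast)\|_F^2$, and an $O(\beta_k^2)$ term.

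The second main step is a contraction inequality for the Jacobian error $E_k\defeq J_k-\nabla F(x^\ast)$. Under the orthonormality in Assumption~\ref{as:algorithm_rand_var}, the operator $P_k$ acts columnwise as an orthogonal projection, hence is idempotent and self-adjoint in the Frobenius inner product. Using $J_{k+1}=J_k+\omega_k(G_{\beta_k}(x_k;\mathcal{S}_k)-P_k(J_k))$ with Lemma~\ref{lm:projected_bias}, the increment rewrites as $E_{k+1}=(I-\omega_k P_k)E_k+\omega_k P_k D_k+O(\beta_k)$ where $D_k\defeq\nabla F(x_k)-\nabla F(x^\ast)$; since $(I-P_k)E_k$ and $P_k D_k$ have orthogonal ranges columnwise, the cross term vanishes and $\|E_{k+1}\|_F^2=\|E_k\|_F^2-\omega_k\|P_k E_k\|_F^2+\omega_k\|P_k D_k\|_F^2+O(\beta_k^2)$. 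Taking $\mathbb{E}[\cdot\mid\mathcal{F}_k]$ and applying $\mathbb{E}[\omega_k P_k(A)]=\sigma A$ together with the projection identity $\|P_k(A)\|_F^2=\langle A,P_k(A)\rangle$---which turns $\mathbb{E}[\omega_k\|P_k(A)\|_F^2]$ into $\sigma\|A\|_F^2$---collapses both projection terms, yielding $\mathbb{E}[\|E_{k+1}\|_F^2\mid\mathcal{F}_k]\leq(1-\sigma)\|E_k\|_F^2+\sigma nL^2\|x_k-x^\ast\|_2^2+O(\beta_k^2)$.

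Finally, I would assemble $\mathbb{E}[\Phi_{k+1}\mid\mathcal{F}_k]$ from the two recursions and pick $c$ of order $\alpha^2 d/(Rn\sigma)$ so that the coefficient of $\|E_k\|_F^2$ shrinks by a factor $(1-\kappa)$; choosing $\kappa\leq\sigma/4$ leaves enough slack in the $(1-\sigma)$ Jacobian contraction to absorb the $c^{-1}$-weighted variance coupling. Substituting $\alpha=\frac{R}{2L(36d+R)}$ then pushes the coefficient of $\|x_k-x^\ast\|_2^2$ below $1-\kappa$ as well, the constant $36d$ being exactly what lets the strong-convexity gain $2\alpha\mu$ dominate the variance penalty of order $\alpha^2 dL^2/R$ up to the residual $\kappa=\min\{\frac{\mu R}{4L(36d+R)},\frac{\sigma}{4}\}$. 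I expect the principal obstacle to be the joint bookkeeping of constants---verifying that a single $c$ makes both coefficients contract at the common rate $\kappa$, while aggregating the several $O(\beta_k^2)$ contributions (from the bias and variance of $g_k$ and from the Jacobian update) into the stated closed form $C_{1}(\beta_k)$. Unrolling the one-step bound and using $\|x_K-x^\ast\|_2^2\leq\Phi_K$ then completes the argument.
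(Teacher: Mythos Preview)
Your Lyapunov strategy and the overall architecture (iterate recursion via prox nonexpansiveness, Jacobian recursion via the projection identity, then combine) are exactly what the paper does. However, there is a genuine gap in how you handle the term $\|\nabla F(x_k)-\nabla F(x^\ast)\|_F^2$.

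You propose, both in the variance bound and in the Jacobian recursion, to upper-bound $\|\nabla F(x_k)-\nabla F(x^\ast)\|_F^2$ by $nL^2\|x_k-x^\ast\|_2^2$ via smoothness, thereby folding everything into the $\|x_k-x^\ast\|_2^2$ coefficient. With the prescribed step size $\alpha=\frac{R}{2L(36d+R)}$, the positive contribution this creates is of order $\alpha^2 L^2\,(2+8d/R)=\Theta\!\big(R/(36d+R)\big)$, which is independent of $\mu$. The strong-convexity gain is only $\alpha\mu/2=\Theta\!\big(\mu R/(L(36d+R))\big)$. Your claimed domination ``$2\alpha\mu$ dominates $\alpha^2 dL^2/R$'' therefore forces $\mu\gtrsim L$ (up to an absolute constant), so the recursion does not contract at rate $\kappa$ for general $\mu\ll L$. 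Put differently, your route would only justify a step size $\alpha=O(\mu R/(dL^2))$, not $\alpha=\Theta(R/(dL))$.

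The paper avoids this by \emph{not} absorbing $\|\nabla F(x_k)-\nabla F(x^\ast)\|_F^2$ into $\|x_k-x^\ast\|_2^2$. Instead it keeps this quantity as a third floating term in both recursions: the cross term yields a negative contribution $-\frac{\alpha}{nL}\|\nabla F(x_k)-\nabla F(x^\ast)\|_F^2$ from per-component co-coercivity (Lemma~\ref{lm:smoothness}, inequality~\eqref{eq:sc_inner_prod}), while the variance bound (Lemma~\ref{lm:sc_gradient_bound}) and the Jacobian recursion (Lemma~\ref{lm:sc_jacobian_recur}) contribute positively. The constants $c_1=\frac{8dR}{L^2 n\sigma(36d+R)^2}$ and $\alpha=\frac{R}{2L(36d+R)}$ are chosen precisely so that the net coefficient of $\|\nabla F(x_k)-\nabla F(x^\ast)\|_F^2$ is \emph{exactly zero}:
\[
\frac{2\alpha^2}{n}+\frac{8d\alpha^2}{nR}-\frac{\alpha}{nL}+2c_1\sigma=0,
\]
and simultaneously the $\|J_k-\nabla F(x^\ast)\|_F^2$ coefficient becomes $(1-\sigma/4)c_1$. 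This cancellation is what permits $\alpha$ to scale like $1/L$ rather than $\mu/L^2$, and hence what produces the rate $\kappa=\min\{\alpha\mu/2,\sigma/4\}$. To fix your argument, retain $\|\nabla F(x_k)-\nabla F(x^\ast)\|_F^2$ as a separate term throughout and tune $c$ and $\alpha$ to annihilate its coefficient.
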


Theorem~\ref{tm:sc_main} establishes linear convergence of Algorithm~\ref{alg:zips} in the strongly convex case. However, the exact convergence rate still depends on $\sigma$ and $\nu$ which are further dependent on the specific distributions of $(\mathcal{R}_k,\mathcal{S}_k,\omega_k)$. For the special implementations mentioned in Section~\ref{section:algorithm}, we can establish more concrete convergence rates.

\begin{corollary}
  \label{cr:sc_main}
  Consider the implementations presented in Section~\ref{section:algorithm}. Suppose that the conditions of Theorem~\ref{tm:sc_main} hold, and that there is a constant $D>0$ such that $\|x^\ast\|_2\leq D$ and $L^{-1}\|\nabla f_i(x^\ast)\|_2\leq D$ for each $i$. Then, given any $\epsilon>0$, by letting $R\leq d$, $x_0=0$, $J_0=0$, and
  \[
  \alpha = \frac{R}{2 L (36 d+R)}, \quad
  \beta_{k} = \beta = \sqrt{\frac{R \mu^{2} \epsilon }{2 d^3 \mu  n^2 (36 L+\mu  n)+ 2d L^2 R}},\quad
  K=\Theta\!\left(\frac{d(n+L/\mu)}{R} \log\frac{n}{\epsilon}\right),
  \]
  we can achieve $\mathbb{E}[h(x_{K}) - h(x^\ast)] \leq \epsilon$. Consequently, the oracle complexity is bounded by
  \begin{equation*}
      O\!\left(d\!\left(n+\frac{L}{\mu}\right) \log\frac{n}{\epsilon}\right).
  \end{equation*}

\end{corollary}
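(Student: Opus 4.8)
The plan is to specialize the general estimate~\eqref{eq:sc_main} of Theorem~\ref{tm:sc_main} to the prescribed parameters and read off the complexity. First I would use Lemma~\ref{lm:param} to replace the distribution-dependent constants by closed forms, $\sigma=\Theta(R/(nd))$ and $\nu=nd\sigma=\Theta(R)$; together with the restriction $R\le d$ this gives $36d+R=\Theta(d)$, so every occurrence of $(36d+R)$, $\sigma$ and $\nu$ collapses to a monomial in $d,R,n$ up to absolute constants. With the initialization $x_0=0,\ J_0=0$ the two transient terms become $\|x^\ast\|_2^2\le D^2$ and $\|\nabla F(x^\ast)\|_F^2=\sum_i\|\nabla f_i(x^\ast)\|_2^2\le nL^2D^2$; multiplying the latter by its coefficient $8dR/(L^2n\sigma(36d+R)^2)=\Theta(1/L^2)$ produces a term of order $nD^2$, and this factor $n$ is precisely what later appears inside the logarithm.

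Next I would evaluate the contraction factor $\kappa=\min\{\mu R/(4L(36d+R)),\ \sigma/4\}=\Theta(\min\{\mu R/(Ld),\ R/(nd)\})=\Theta(R/(d(n+L/\mu)))$, so that $\kappa^{-1}=\Theta(d(n+L/\mu)/R)$. Forcing the transient term $(1-\kappa)^K\cdot\Theta(nD^2)$ below $\epsilon$ then gives $K=\Theta(\kappa^{-1}\log(n/\epsilon))=\Theta(\tfrac{d(n+L/\mu)}{R}\log\tfrac n\epsilon)$. The bias is handled separately: bounding the geometric sum by $\sum_{k=1}^K(1-\kappa)^{K-k}C_1(\beta)\le C_1(\beta)/\kappa$ and substituting the closed forms of $\sigma,\nu$ shows that $C_1(\beta)/\kappa$ equals $\beta^2$ times an explicit polynomial in $d,n,L,\mu$; a direct term-by-term comparison then verifies that the prescribed $\beta^2=\Theta(R\mu^2\epsilon/(d^3\mu n^2(L+\mu n)+dL^2R))$ makes this contribution at most $\epsilon$. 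This algebraic verification is tedious but routine.

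The delicate point is that Theorem~\ref{tm:sc_main} controls the iterate error $\mathbb{E}\|x_K-x^\ast\|_2^2$, whereas the corollary asserts a function-value bound, and for nonsmooth $\psi$ there is in general no inequality $h(x)-h(x^\ast)\le C\|x-x^\ast\|_2^2$. Rather than converting post hoc, I would extract the function gap from the one-step proximal descent inequality underlying the proof of Theorem~\ref{tm:sc_main}: such an inequality controls $2\alpha\,\mathbb{E}[h(x_{k+1})-h(x^\ast)]$ by the decrement of the Lyapunov function $\Phi_k=\|x_k-x^\ast\|_2^2+c\|J_k-\nabla F(x^\ast)\|_F^2$ plus an $O(\beta^2)$ bias, so that $\mathbb{E}[h(x_K)-h(x^\ast)]\le\tfrac{1}{2\alpha}\mathbb{E}[\Phi_{K-1}]+O(\beta^2/\alpha)$, where $\mathbb{E}[\Phi_{K-1}]$ already obeys the transient-plus-floor bound derived above. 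The factor $\alpha^{-1}=\Theta(Ld/R)$ is then absorbed in two ways: in the transient term it only rescales the argument of the logarithm, leaving the complexity at order $\log(n/\epsilon)$, while in the bias floor it is exactly compensated by the $R\epsilon$-proportional choice of $\beta^2$. I expect this function-value conversion, together with the precise constant bookkeeping needed to land on $\log(n/\epsilon)$ rather than a looser factor, to be the main obstacle.

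Finally, each implementation of Section~\ref{subsection:impl} issues $\Theta(R)$ oracle queries per iteration on average, so the total oracle complexity is $\Theta(R)\cdot K=\Theta(R\cdot\tfrac{d(n+L/\mu)}{R}\log\tfrac n\epsilon)=O(d(n+L/\mu)\log(n/\epsilon))$; the cancellation of $R$ is exactly what renders the final bound independent of the chosen batch size.
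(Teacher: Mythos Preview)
Your plan is correct. The paper does not supply a separate proof of Corollary~\ref{cr:sc_main}; it is presented as a direct specialization of Theorem~\ref{tm:sc_main} via Lemma~\ref{lm:param} and parameter substitution, which is exactly what your first two paragraphs carry out.

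You actually go beyond the paper by flagging and resolving the function-value conversion. Theorem~\ref{tm:sc_main} bounds only $\mathbb{E}\|x_K-x^\ast\|_2^2$, while the corollary asserts control of $\mathbb{E}[h(x_K)-h(x^\ast)]$, and for nonsmooth $\psi$ there is indeed no blanket inequality $h(x)-h(x^\ast)\le C\|x-x^\ast\|_2^2$; the paper is silent on this gap. Your remedy---a one-step proximal descent inequality to turn the Lyapunov bound into a function-value bound---is the right fix. One small correction: the inequality actually used in the proof of Theorem~\ref{tm:sc_main}, namely~\eqref{eq:sc_nonexpan}, is pure prox-nonexpansiveness and contains no $h(x_{k+1})-h(x^\ast)$ term, so it is not the right hook. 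The paper's Lemma~\ref{lm:c_alt_distance} (stated for the convex case, but its proof does not invoke strong convexity) is the natural tool, yielding precisely $2\alpha\,\mathbb{E}[h(x_{k+1})-h(x^\ast)\mid\mathcal{F}_k]\le \|x_k-x^\ast\|_2^2+\text{(Jacobian and bias terms)}$; applying it once at step $K{-}1$ and feeding in the already-established bound on $\mathbb{E}[\Phi_1^{K-1}]$ executes your plan. The extra factor $\alpha^{-1}=\Theta(Ld/R)$ inside the logarithm then produces $\log(ndL/(R\epsilon))$ rather than $\log(n/\epsilon)$; landing exactly on the stated $K$ requires treating $L,D$ as $O(1)$ and absorbing a benign $\log(d/R)$ term, which the paper's $\Theta$-expression appears to do implicitly as well.
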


As we can see, the oracle complexity $O\left(d(n+L/\mu) \log(n/\epsilon)\right)$ does not depend on the batch sizes $R$ . Thus in practice, we can use the purely incremental update scheme (Implementation~I in Section~\ref{subsection:impl}) with $R = 1$, for which one only needs to construct one $2$-point estimator per iteration. As a comparison, simply using the $2$-point zeroth-order gradient estimator without variance reduction can only achieve sublinear convergence. Moreover, our complexity bound is even better than the $O(dnL/\mu\cdot\log(1/\epsilon))$ complexity bound of zeroth-order full-batch algorithms when $n$ and the condition number $L/\mu$ are large.

Compared to the best known complexity bound $O\left((n+L/\mu) \log(1/\epsilon)\right)$ achieved by first-order variance reduction algorithms~\cite{defazio2014saga, johnson2013accelerating}, our proved complexity is only $O(d)$ times higher (neglecting terms logarithmic in $n$), bridging the gap between first-order and zeroth-order algorithms.

Among existing zeroth-order algorithms, ZPDVR \cite{di2024double} is the only method with guaranteed linear convergence on the composite finite-sum problem \eqref{eq:formulation} for strongly convex objective functions. Although their complexity bound $O\left(d(n+L/\mu) \log(1/\epsilon)\right)$ does not contain the $\log(n)$ term, their algorithm requires a $\Theta(n)$ batch sampling of the component functions from time to time. Moreover, their algorithm applies variance reduction twice---constructing one variance-reduced estimator on top of another---which makes their analysis considerably more involved. In contrast, our algorithm handles the two sources of variance in a single step, resulting in a simpler and more general method.

\subsection{Convex Case}
We then consider the case where each function $f_i$ is convex but $f$ is not assumed to be strongly convex. The following theorem summarizes the general convergence guarantees for Algorithm~\ref{alg:zips} in the convex case:
\begin{theorem}
  \label{tm:c_final}
\refstepcounter{betacounter}\label{counter:c_constant}
Suppose Assumptions~\ref{as:smooth} and \ref{as:algorithm_rand_var} hold, each $f_i$ is convex, and $x^\ast$ is a solution to~\eqref{eq:formulation}. Let the step size and the smoothing radii satisfy
\[
\alpha =\frac{R}{2L(40 d +R)}
\qquad\text{and}\qquad
\sum_{k=0}^{\infty} \beta_{k}<+\infty,
\]
with $\beta_k$ being non-increasing. Then we have
\refstepcounter{betacounter}\label{counter:c_final}
  \begin{equation*}
  \begin{aligned}
    \frac{1}{K} \sum_{k=1}^{K}\mathbb{E}[h(x_{k})] - h(x^{\ast})
\leq
  \frac{\sqrt{\mathrm{e}}(1\!+\!40d/R)L}{K}\!
    \left[2\lVert x_{0} \!-\! x^{\ast}\rVert_{2}^{2}
      + \frac{8dR}{L^2n\sigma(40d\!+\!R)^2}\lVert J_{0} \!-\! \nabla F(x^{\ast})\rVert_{F}^{2}
    + C_{\ref{counter:c_final}}\right]\!,
    \end{aligned}
  \end{equation*}
  for all positive integer $K$, where we denote $C_{\ref{counter:c_constant}}=\sum_{k=0}^{\infty} \beta_{k}$ and
  \begin{align*}
      C_{\ref{counter:c_final}} = 
    \frac{1}{2}
    \alpha L d C_2^2
    +
    \left(2+\frac{32\nu}{R\sigma^2}\right)\alpha^2 L^2 d^2\beta_0C_2.
  \end{align*}
\end{theorem}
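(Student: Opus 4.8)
The plan is to track progress through a coupled Lyapunov function
\[
V_k = \lVert x_k - x^\ast\rVert_2^2 + c\,\lVert J_k - \nabla F(x^\ast)\rVert_F^2,
\]
where $c>0$ is a coefficient to be fixed during the analysis. First I would derive a one-step bound for the proximal update~\eqref{eq:alg_prox_grad}. Using firm nonexpansiveness of $\operatorname{prox}_{\alpha\psi}$ and the optimality condition $-\nabla f(x^\ast)\in\partial\psi(x^\ast)$, together with $L$-smoothness from Assumption~\ref{as:smooth}, I would obtain an inequality isolating the genuine descent term $-2\alpha(h(x_{k+1})-h(x^\ast))$, a cross term $2\alpha\langle\nabla f(x_k)-g_k,\,x_{k+1}-x^\ast\rangle$, and a quadratic term in $\lVert x_{k+1}-x_k\rVert_2^2$. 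Taking the conditional expectation $\mathbb{E}[\,\cdot\mid\mathcal F_k]$ over the history up to step $k$ and invoking the approximate unbiasedness of $g_k$ recorded in~\eqref{eq:2_point_bias_var} (made precise by Lemmas~\ref{lm:direc_bias} and~\ref{lm:projected_bias}), the cross term splits into a mean part that cancels up to an $O(\beta_k)$ bias and a variance part handled by Young's inequality; the bias contributions are exactly where the constant $C_2$ and the summand $\beta_k$ enter the final $C_{\ref{counter:c_final}}$.

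The two technical estimates driving the argument are (i) a second-moment bound $\mathbb{E}[\lVert g_k-\nabla f(x_k)\rVert_2^2\mid\mathcal F_k]\lesssim \tfrac{d}{Rn}\lVert\nabla F(x_k)-J_k\rVert_F^2 + \text{bias}$, where the orthonormality in Assumption~\ref{as:algorithm_rand_var} kills the cross terms and the $\tfrac1n$ arises from averaging over the uniformly sampled component indices, and (ii) the Jacobian recursion $\mathbb{E}[\lVert J_{k+1}-\nabla F(x^\ast)\rVert_F^2\mid\mathcal F_k]\le(1-\sigma)\lVert J_k-\nabla F(x^\ast)\rVert_F^2 + \sigma\lVert\nabla F(x_k)-\nabla F(x^\ast)\rVert_F^2 + \text{bias}$, which follows from the update~\eqref{eq:alg_J_update_final} and the contraction identity~\eqref{eq:alg_omega_req1}. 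In both estimates I would split $\lVert\nabla F(x_k)-J_k\rVert_F^2\le 2\lVert\nabla F(x_k)-\nabla F(x^\ast)\rVert_F^2+2\lVert J_k-\nabla F(x^\ast)\rVert_F^2$; the second summand feeds the Lyapunov's Jacobian term, while the first is converted through co-coercivity of the convex, $L$-smooth $f_i$ into Bregman divergences. Crucially, the optimality condition for $\psi$ then yields $\tfrac1n\lVert\nabla F(x_k)-\nabla F(x^\ast)\rVert_F^2\le 2L\,(h(x_k)-h(x^\ast))$, converting every gradient-difference error into the very composite gap we intend to bound.

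Assembling the two estimates into $\mathbb{E}[V_{k+1}\mid\mathcal F_k]$ and choosing $c=\Theta(\alpha^2 d/(Rn\sigma))$ (the source of the Jacobian coefficient $\tfrac{8dR}{L^2n\sigma(40d+R)^2}$ in the statement) so that $c\sigma$ dominates the variance's Jacobian contribution, the Jacobian term telescopes and we are left with a per-step inequality of the form $\mathbb{E}[V_{k+1}\mid\mathcal F_k]\le V_k - 2\alpha(h(x_{k+1})-h(x^\ast)) + (A+cA')(h(x_k)-h(x^\ast)) + \text{bias}_k$, where $A,A'$ are the co-coercivity conversion constants. Summing over $k=0,\ldots,K-1$ and regrouping the index-shifted gaps gives $(2\alpha-(A+cA'))\sum_{k=1}^K(h(x_k)-h(x^\ast))\le V_0 + (A+cA')(h(x_0)-h(x^\ast)) + \sum_k\text{bias}_k$. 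Bounding the stray $h(x_0)-h(x^\ast)$ by smoothness merges it with $\lVert x_0-x^\ast\rVert_2^2$ to produce the factor $2$, while dividing by $2\alpha K$ and using $\nu=nd\sigma$ from Lemma~\ref{lm:param} yields the stated prefactor $\tfrac{(1+40d/R)L}{K}=\tfrac{1}{2\alpha K}$.

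The main obstacle is the constant chasing in the final rearrangement. Because there is no strong convexity, the Lyapunov only telescopes rather than contracts, and the descent term lives at index $k+1$ whereas the co-coercivity conversions live at index $k$; controlling this mismatch requires that the net conversion coefficient $A+cA'$ stay a definite fraction below $2\alpha$, so that $\tfrac{1}{2\alpha-(A+cA')}\le\tfrac{\sqrt{\mathrm e}}{2\alpha}$ via $\tfrac{1}{1-x}\le\mathrm e^{x/(1-x)}\le\sqrt{\mathrm e}$. Verifying this margin is precisely what forces the step size $\alpha=\tfrac{R}{2L(40d+R)}$ with the enlarged constant $40d$ (as opposed to the $36d$ of the strongly convex case), and propagating the $\beta_k$-dependent bias through all of these manipulations---so that it aggregates into $C_{\ref{counter:c_final}}$ and $C_{\ref{counter:c_constant}}=\sum_k\beta_k$---is the most laborious bookkeeping.
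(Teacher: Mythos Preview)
Your route diverges from the paper's in two structural ways, and one of these hides a genuine gap.

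\textbf{The gap: the bias cross term and the true source of $\sqrt{\mathrm e}$.} You write that after conditioning, ``the cross term splits into a mean part that cancels up to an $O(\beta_k)$ bias,'' and that $\sqrt{\mathrm e}$ arises from the index-shift inequality $\tfrac{1}{2\alpha-(A+cA')}\le\tfrac{\sqrt{\mathrm e}}{2\alpha}$. Neither is correct. The mean part $\langle \nabla f(x_k)-\mathbb{E}[g_k\mid\mathcal F_k],\,x^\ast-\bar x_{k+1}\rangle$ (with $\bar x_{k+1}$ the deterministic proximal step) is bounded by $\tfrac{L\beta_k\sqrt d}{2}\lVert x_k-x^\ast\rVert_2$ via Lemma~\ref{lm:estimator_bias} and nonexpansiveness, which is \emph{not} $O(\beta_k)$: without strong convexity there is no a~priori control on $\lVert x_k-x^\ast\rVert_2$. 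The paper handles this via Young's inequality with the $\beta_k$-dependent split $\tfrac{\alpha L\beta_k}{C_2}\lVert x_k-x^\ast\rVert_2^2+\tfrac{\alpha L\beta_k dC_2}{4}$, which makes the Lyapunov recursion \emph{multiplicative}: $\mathbb{E}[\Phi_2^{k+1}\mid\mathcal F_k]\le(1+\alpha L\beta_k/C_2)\Phi_2^k-2\alpha\,\mathbb{E}[h(x_{k+1})-h^\ast\mid\mathcal F_k]+C(\beta_k)$. The factor $\sqrt{\mathrm e}$ is then precisely the product $\prod_k(1+\alpha L\beta_k/C_2)\le\exp(\alpha L\sum_k\beta_k/C_2)=\exp(\alpha L)\le\sqrt{\mathrm e}$, using $\sum_k\beta_k=C_2$ and $\alpha L\le 1/2$. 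Your index-shift mechanism does not produce this constant, and your scheme for the bias never accounts for the $\lVert x_k-x^\ast\rVert_2$ factor.

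\textbf{The route difference: how $\lVert\nabla F(x_k)-\nabla F(x^\ast)\rVert_F^2$ is eliminated.} You propose to convert this term to $h(x_k)-h(x^\ast)$ via the Bregman bound~\eqref{eq:sc_bregman_bound} and then absorb it by the index-shift. The paper instead avoids any such conversion: it derives \emph{two} separate distance recursions for $\lVert x_{k+1}-x^\ast\rVert_2^2$ --- Lemma~\ref{lm:c_alt_distance} (which carries the $-2\alpha(h(x_{k+1})-h^\ast)$ term) and a second one from nonexpansiveness plus~\eqref{eq:sc_smoothness} (which carries a \emph{negative} $-\tfrac{\alpha}{nL}\lVert\nabla F(x_k)-\nabla F(x^\ast)\rVert_F^2$ term) --- adds them, and then chooses $c_2$ and $\alpha=\tfrac{R}{2L(40d+R)}$ so that the net coefficient of $\lVert\nabla F(x_k)-\nabla F(x^\ast)\rVert_F^2$ is exactly zero. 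The factor $2$ in front of $\lVert x_0-x^\ast\rVert_2^2$ in the statement is structural (the Lyapunov is $\Phi_2^k=2\lVert x_k-x^\ast\rVert_2^2+c_2\lVert J_k-\nabla F(x^\ast)\rVert_F^2$), not a consequence of bounding $h(x_0)-h(x^\ast)$. In fact your proposed bound $h(x_0)-h(x^\ast)\lesssim L\lVert x_0-x^\ast\rVert_2^2$ is generally false for composite $h$, since $\psi$ need not be smooth or even finite at $x_0$.
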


The following corollary then provides oracle complexity bounds for the special implementations detailed in Section~\ref{section:algorithm}:

\begin{corollary}
  \label{cr:c_final}
  Consider the implementations presented in Section~\ref{section:algorithm}. Suppose that the conditions of Theorem~\ref{tm:c_final} hold, and that there is a constant $D>0$ such that $\|x^\ast\|_2\leq D$ and $L^{-1}\|\nabla f_i(x^\ast)\|_2\leq D$ for each $i$. 
  Then, given any $\epsilon>0$, by choosing $R\leq d$, $x_0=0$, $J_0=0$, and
  \[
  \alpha =\frac{R}{2L(40 d +R)},
  \quad
  \beta_k=\beta=O\!\left(\min\left\{
  \frac{\epsilon}{\sqrt{n}d},\frac{\sqrt{R\epsilon}}{nd^{3/2}}\right\}\right)\ \ \text{for }k\leq K,\quad
  K=\Theta\!\left(\frac{nd}{R\epsilon}\right),
  \]
  we can achieve achieve $\mathbb{E}[h(\bar{x}_{K}) - h(x^{\ast})] \leq \epsilon$ where $\bar{x}_K=\frac{1}{K}\sum_{k=1}^K x_k$. Consequently, the oracle complexity is bounded by
  \begin{equation*}
      O\left(\frac{nd}{\epsilon}\right).
  \end{equation*}
\end{corollary}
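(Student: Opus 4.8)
The plan is to specialize the general convergence bound of Theorem~\ref{tm:c_final} to the stated parameter choices and then count oracle queries. Since each $f_i$ (and hence $h$) is convex, Jensen's inequality gives $h(\bar x_K)\le\frac{1}{K}\sum_{k=1}^K h(x_k)$ with $\bar x_K=\frac1K\sum_{k=1}^K x_k$, so $\mathbb{E}[h(\bar x_K)]-h(x^\ast)$ is bounded by exactly the averaged suboptimality appearing on the left-hand side of Theorem~\ref{tm:c_final}. It therefore suffices to force the right-hand side of that theorem below $\epsilon$ under the stated choices of $\alpha$, $\beta$, and $K$.

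First I would substitute the initial conditions and problem data. With $x_0=0$ and $J_0=0$ we have $\|x_0-x^\ast\|_2^2\le D^2$ and $\|J_0-\nabla F(x^\ast)\|_F^2=\sum_{i=1}^n\|\nabla f_i(x^\ast)\|_2^2\le nL^2D^2$. Using Lemma~\ref{lm:param} I would replace $\sigma$ by its range $\Theta(R/(nd))$ and $\nu$ by $nd\sigma$, and use $R\le d$ to reduce $40d+R=\Theta(d)$, the prefactor $1+40d/R=\Theta(d/R)$, and $\alpha=\Theta(R/(Ld))$. Inserting $K=\Theta(nd/(R\epsilon))$ collapses the leading factor $\frac{\sqrt{\mathrm e}(1+40d/R)L}{K}$ to $\Theta(L\epsilon/n)$. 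The two initial-condition terms are then handled directly: the $\|x_0-x^\ast\|_2^2$ term is $\Theta(L\epsilon/n)\cdot O(D^2)=O(\epsilon)$, while in the Jacobian term the explicit $1/n$ cancels the $n$ in $nL^2D^2$ but $\sigma^{-1}=\Theta(nd/R)$ reintroduces one factor of $n$, leaving $\Theta(nD^2)$ which the $1/n$ inside $\Theta(L\epsilon/n)$ again cancels, giving $O(\epsilon)$ (treating $L,D$ as constants).

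The core step is the smoothing-bias constant $C_{\ref{counter:c_final}}$. Reading $C_{\ref{counter:c_constant}}=\sum_{k=0}^K\beta_k=\Theta(K\beta)$ over the finite horizon, its two summands scale as $\tfrac12\alpha L d\,C_{\ref{counter:c_constant}}^2=\Theta(RK^2\beta^2)$ and $(2+32\nu/(R\sigma^2))\alpha^2L^2d^2\beta_0 C_{\ref{counter:c_constant}}=\Theta(n^2d^2 K\beta^2)$, where I used $32\nu/(R\sigma^2)=\Theta(n^2d^2/R^2)$, $\alpha Ld=\Theta(R)$, and $\alpha^2L^2d^2=\Theta(R^2)$. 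Multiplying each by $\Theta(L\epsilon/n)$ and inserting $K=\Theta(nd/(R\epsilon))$ converts the two requirements ``$\le\epsilon$'' into $\beta\lesssim\sqrt R\,\epsilon/(d\sqrt n)$ and $\beta\lesssim\sqrt{R\epsilon}/(nd^{3/2})$; taking $\beta$ to be the minimum gives the stated choice (the first can be stated in the slightly stronger, $R$-free form $\epsilon/(\sqrt n d)$, which is certainly admissible). Finally, the oracle complexity follows by multiplying $K=\Theta(nd/(R\epsilon))$ by the average per-iteration cost, which is $\Theta(R)$ for every implementation in Section~\ref{subsection:impl} (infrequent full updates of $J_k$ are amortized), yielding $\Theta(R)\cdot\Theta(nd/(R\epsilon))=O(nd/\epsilon)$, independent of $R$.

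I expect the main obstacle to be the bias bookkeeping. Because $C_{\ref{counter:c_constant}}=\sum_k\beta_k$ accumulates linearly in $K$, the two bias terms grow like $K^2\beta^2$ and $K\beta^2$, so one must verify that shrinking $\beta$ polynomially in $\epsilon$ suppresses both simultaneously while confirming (as it does) that $\beta$ never enters the complexity count. Making the finite-horizon interpretation of the infinite tail $\sum_{k=0}^\infty\beta_k$ precise, and tracking the interacting powers of $n,d,R$ through $\sigma,\nu,\alpha$, and $K$ without dropping or gaining a stray factor (as in the cancellation of $n$ in the Jacobian term), is where the care lies.
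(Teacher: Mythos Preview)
Your proposal is correct and follows the natural route: the paper does not give an explicit proof of this corollary, but your derivation is exactly the intended specialization of Theorem~\ref{tm:c_final} together with Lemma~\ref{lm:param}, Jensen's inequality for $\bar x_K$, and the $\Theta(R)$ amortized per-iteration oracle cost. The finite-horizon handling of $C_{\ref{counter:c_constant}}$ (setting $\beta_k=0$ for $k>K$, so $C_{\ref{counter:c_constant}}=\Theta(K\beta)$) and your tracking of the $n,d,R$ powers through $\sigma,\nu,\alpha$ are all correct.
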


    We can see from Corollary~\ref{cr:c_final} that, in the convex case, the oracle complexity of Algorithm~\ref{alg:zips} can be independent of the batch size. Moreover, this complexity result, achieved by only two oracle queries per iteration, is the same as the complexity of the full-batch zeroth-order algorithm that requires $\Theta(nd)$ oracle queries per iteration. Although in the convex setting, variance reduction does not provide a better convergence rate compared to full-batch algorithms, our proposed algorithm has the clear advantage of taking $\Theta(1)$ instead of $\Theta(nd)$ oracle queries per iteration.

    We note that for the convex setting, the convergence results of both first-order and zeroth-order variance-reduced algorithms are scanty. Among the first-order ones, SAGA achieves an oracle complexity of $O(n/\epsilon)$, the same as first-order full-batch proximal gradient descent algorithm. The zeroth-order algorithm in~\cite{ji2019improved} achieves a complexity of $\displaystyle O\!\left(d \max\!\left\{n, \frac{1}{\epsilon}\right\} \ln\frac{1} {\epsilon}\right)$ but is designed for unconstrained problems. Moreover, the analysis in~\cite{ji2019improved} needs to explicitly impose boundedness of the iterates $x_k$ as an assumption in the unconstrained setting, which is nonstandard and seems to be a theoretical limitation.

\subsection{Non-convex Case}
When $f$ is non-convex, we introduce the standard gradient mapping~\cite{parikh2014proximal} $\mathfrak{g}_{\alpha}$ defined by
\begin{equation*}
  \mathfrak{g}_{\alpha}(x) = \frac{1}{\alpha}(x - \text{prox}_{\alpha \psi} (x - \alpha \nabla f(x)))
\end{equation*}
to measure the stationarity of a point. The quantity $\|\mathfrak{g}_{\alpha}(x)\|_2$ captures the size of a proximal-gradient step and therefore serves as a natural stationarity measure for composite optimization problems. Particularly, $x\mapsto \|\mathfrak{g}_{\alpha}(x)\|_2$ is continuous, and if $x$ is a local minimum point, then $\|\mathfrak{g}_{\alpha}(x)\|_2 = 0$. Hence we introduce the following convergence metric.
\begin{definition}
For each $\epsilon>0$, a point $x$ returned by \AlgAbbrv{} is called an $\epsilon$-accurate solution to the problem~\eqref{eq:formulation} if $\mathbb{E}\!\left[\lVert \mathfrak{g}_{\alpha}(x) \rVert_2^{2}\right] \leq \epsilon$ for some $\alpha > 0$.
\end{definition}
Now we are ready to present our main result.

\begin{theorem}
  \label{tm:nc_main}
Suppose Assumptions~\ref{as:smooth} and~\ref{as:algorithm_rand_var} hold, and $h^\ast\defeq \inf_{x\in\mathbb{R}^d} h(x)>-\infty$. Furthermore, suppose $R \leq d/\sigma^{2}$.
Then by setting the step size to be $\alpha = \frac{\sqrt{R} \sigma }{5 \sqrt{d} L}$,

we have
  \refstepcounter{betacounter}
  \label{counter:nc_final}
  \begin{equation}
    \label{eq:nc_main}
    \frac{1}{K}\sum_{k=0}^{K} \lVert \mathfrak{g}_{\alpha}(x_k) \rVert_{2}^{2} 
    \leq
    \frac{20 \sqrt{d} L}{K\sqrt{R} \sigma }
    \left(
    h(x_{0}) - h(x^{\ast}) + \frac{2\sqrt{d}}{5Ln\sqrt{R}}\|J_0-\nabla F(x_0)\|_F^2
    + \sum_{k=0}^{K} C_{\ref{counter:nc_final}}(\beta_{k})
    \right)
  \end{equation}
  for all positive integer $K$, where
  \begin{equation*}
    C_{\thebetacounter}(\beta_{k}) =
    \frac{\beta_k^2 d^{3/2} L \left(20 \nu +R \sigma ^2\right)}{20 \sqrt{R} \sigma }.
  \end{equation*}
\end{theorem}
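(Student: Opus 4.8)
The plan is to track a Lyapunov function of the form
\[
\Phi_k \defeq h(x_k) - h^\ast + \frac{c}{n}\lVert J_k - \nabla F(x_k)\rVert_F^2,
\]
where $c>0$ is a constant to be tuned, and to show that a single proximal step decreases $\Phi_k$ in expectation by an amount proportional to $\lVert \mathfrak{g}_\alpha(x_k)\rVert_2^2$, up to an additive bias that is $O(\beta_k^2)$. Summing this one-step inequality over $k$ and using $h(x_{K+1})\geq h^\ast$ then yields~\eqref{eq:nc_main} after multiplying by $\tfrac{4}{\alpha K}=\tfrac{20\sqrt{d}L}{K\sqrt{R}\sigma}$, with $c$ chosen as $\tfrac{2\sqrt{d}}{5L\sqrt{R}}$ to match the coefficient of $\lVert J_0-\nabla F(x_0)\rVert_F^2$ in the statement.

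First I would derive a descent inequality for the proximal step. Writing the optimality condition of $x_{k+1}=\operatorname{prox}_{\alpha\psi}(x_k-\alpha g_k)$ as $\tfrac{1}{\alpha}(x_k-x_{k+1})-g_k\in\partial\psi(x_{k+1})$, combining the resulting subgradient inequality (evaluated at $y=x_k$) with the $L$-smoothness of $f$, and applying Young's inequality with a free parameter $\gamma>0$ to the cross term $\langle \nabla f(x_k)-g_k,\,x_{k+1}-x_k\rangle$, gives
\[
h(x_{k+1}) \leq h(x_k) - \Big(\tfrac{1}{\alpha}-\tfrac{L}{2}-\tfrac{\gamma}{2}\Big)\lVert x_{k+1}-x_k\rVert_2^2 + \tfrac{1}{2\gamma}\lVert g_k-\nabla f(x_k)\rVert_2^2 .
\]
To convert the $\lVert x_{k+1}-x_k\rVert_2^2$ term into the stationarity measure, I would use the nonexpansiveness of the proximal operator: comparing $x_{k+1}$ with the exact prox step $\operatorname{prox}_{\alpha\psi}(x_k-\alpha\nabla f(x_k))$ that defines $\mathfrak{g}_\alpha(x_k)$ yields $\lVert x_{k+1}-x_k\rVert_2^2 \geq \tfrac{\alpha^2}{2}\lVert \mathfrak{g}_\alpha(x_k)\rVert_2^2 - \alpha^2\lVert g_k-\nabla f(x_k)\rVert_2^2$, so that the $\lVert x_{k+1}-x_k\rVert_2^2$ term (whose coefficient is negative for small $\alpha$) can be traded for a genuine $-\lVert\mathfrak{g}_\alpha(x_k)\rVert_2^2$ term plus additional gradient error.

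Next I would control the two error sources in expectation. Conditioning on $(x_k,J_k)$ and using the unbiasedness and variance computations already recorded in Section~\ref{section:algorithm}, together with the orthonormality of the directions in $\mathcal{S}_k$ and the directional bias estimate of Lemma~\ref{lm:direc_bias}, the gradient error splits as
\[
\mathbb{E}\Econd*{\lVert g_k-\nabla f(x_k)\rVert_2^2}{x_k,J_k} \lesssim \frac{d}{Rn}\lVert J_k-\nabla F(x_k)\rVert_F^2 + O(\beta_k^2 d^2),
\]
the first term feeding the Lyapunov coupling and the $O(\beta_k^2)$ bias feeding $C_{\ref{counter:nc_final}}(\beta_k)$. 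Separately, the Jacobian update~\eqref{eq:alg_J_update_final} together with $\mathbb{E}[\omega_k P_k(A)]=\sigma A$ (Assumption~\ref{as:algorithm_rand_var}) and Lemma~\ref{lm:projected_bias} gives a contraction toward the \emph{current} Jacobian, $\mathbb{E}\lVert J_{k+1}-\nabla F(x_k)\rVert_F^2 \leq (1-\sigma)\lVert J_k-\nabla F(x_k)\rVert_F^2 + O(\beta_k^2)$. Since the potential must track $\nabla F(x_{k+1})$, I would then shift the anchor using $\lVert\nabla F(x_{k+1})-\nabla F(x_k)\rVert_F^2\leq nL^2\lVert x_{k+1}-x_k\rVert_2^2$ and a Young split with parameter $\eta\asymp\sigma$, producing $\mathbb{E}\lVert J_{k+1}-\nabla F(x_{k+1})\rVert_F^2 \leq (1-\tfrac{\sigma}{2})\lVert J_k-\nabla F(x_k)\rVert_F^2 + O\!\big(\tfrac{nL^2}{\sigma}\big)\lVert x_{k+1}-x_k\rVert_2^2 + O(\beta_k^2)$.

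The main obstacle is the bookkeeping in the final combination. After adding $\tfrac{c}{n}$ times the Jacobian recursion to the descent inequality, the $\lVert x_{k+1}-x_k\rVert_2^2$ terms appear with opposite signs, and one must choose $c$, the Young parameter $\gamma$, and the step size $\alpha=\tfrac{\sqrt{R}\sigma}{5\sqrt{d}L}$ so that their net coefficient is nonpositive while a term $-\tfrac{\alpha}{4}\lVert\mathfrak{g}_\alpha(x_k)\rVert_2^2$ survives. This is exactly where the standing restriction $R\leq d/\sigma^2$ enters: it keeps $\alpha$ (equivalently $\tfrac{c L^2}{\sigma}\alpha^2$) small enough that the leakage from the anchor shift is absorbed by the $(1-\tfrac{\sigma}{2})$ contraction and by the coupling constant $\tfrac{c}{n}\cdot\tfrac{d}{Rn}$. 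Once the constants are fixed so that $\mathbb{E}[\Phi_{k+1}]\leq \mathbb{E}[\Phi_k]-\tfrac{\alpha}{4}\lVert\mathfrak{g}_\alpha(x_k)\rVert_2^2 + C_{\ref{counter:nc_final}}(\beta_k)$, summing from $k=0$ to $K$, dropping the nonnegative $\Phi_{K+1}$, and multiplying by $\tfrac{4}{\alpha K}$ delivers precisely~\eqref{eq:nc_main}, with $\nabla F(x_0)$ anchoring the initial Jacobian term. The detailed constant verification will be carried out in Section~\ref{section:proof}.
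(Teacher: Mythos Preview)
Your proposal is essentially correct and follows the same Lyapunov architecture as the paper: couple $h(x_k)$ with $\tfrac{c}{n}\lVert J_k-\nabla F(x_k)\rVert_F^2$, use the gradient--error bound $\mathbb{E}\lVert g_k-\nabla f(x_k)\rVert_2^2\lesssim \tfrac{d}{nR}\lVert J_k-\nabla F(x_k)\rVert_F^2 + O(\beta_k^2 d^2)$, and use a Jacobian recursion that contracts by $(1-\tfrac{\sigma}{2})$ after an anchor shift costing $O(\tfrac{nL^2}{\sigma})\lVert x_{k+1}-x_k\rVert_2^2$. Your choice $c=\tfrac{2\sqrt{d}}{5L\sqrt{R}}$ coincides with the paper's $c_3=\tfrac{2\alpha d}{nR\sigma}$ once $\alpha$ is substituted, and the role of $R\leq d/\sigma^2$ is identified correctly.

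The one genuine difference is how the $\lVert\mathfrak{g}_\alpha(x_k)\rVert_2^2$ term is produced. You extract it indirectly: first obtain a descent inequality featuring only $-\lVert x_{k+1}-x_k\rVert_2^2$, then lower-bound $\lVert x_{k+1}-x_k\rVert_2^2$ by $\tfrac{\alpha^2}{2}\lVert\mathfrak{g}_\alpha(x_k)\rVert_2^2-\alpha^2\lVert g_k-\nabla f(x_k)\rVert_2^2$ via nonexpansiveness. The paper instead invokes a three-point lemma (Lemma~2 of \cite{j2016proximal}) twice---once at $y=\bar x_{k+1}$ and once at $y=x_k$---to obtain in a single stroke
\[
\mathbb{E}[h(x_{k+1})\mid\mathcal{F}_k] \leq h(x_k) + \Big(\tfrac{L}{2}-\tfrac{1}{2\alpha}\Big)\mathbb{E}\lVert x_{k+1}-x_k\rVert_2^2 + \Big(\alpha^2 L-\tfrac{\alpha}{2}\Big)\lVert\mathfrak{g}_\alpha(x_k)\rVert_2^2 + \tfrac{\alpha}{2}\mathbb{E}\lVert g_k-\nabla f(x_k)\rVert_2^2,
\]
with the $\lVert x_{k+1}-x_k\rVert_2^2$ and $\lVert\mathfrak{g}_\alpha(x_k)\rVert_2^2$ terms already decoupled. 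This makes the final constant balancing cleaner: the $\lVert x_{k+1}-x_k\rVert_2^2$ term is used solely to absorb the anchor-shift leakage, and no extra gradient-error term is generated in the conversion. Your route is more elementary and still works, but you will have to split the $\lVert x_{k+1}-x_k\rVert_2^2$ budget and reabsorb the additional $A\alpha^2\lVert g_k-\nabla f(x_k)\rVert_2^2$ contribution into the Jacobian coupling, so expect the bookkeeping to be slightly heavier than in the paper.
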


Again we give the following corollary for the special implementations in Section~\ref{section:algorithm}.
\begin{corollary}
\label{corollary:nonconvex_complexity1}
  Consider the implementations presented in Section~\ref{section:algorithm}. Suppose that the conditions of Theorem~\ref{tm:nc_main} hold, and that there is a constant $V>0$ such that $h(0)-h(x^\ast)\leq V$ and $L^{-1}\|\nabla f_i(0)\|_2^2\leq V$ for each $i$. Then, given any $\epsilon>0$, by choosing $R\leq n^{2/3}d$, $x_0=0$, $J_0=0$, and
  \[
  \alpha = \frac{1}{5 nL}\!\left(\frac{R}{d}\right)^{\!3/2},
  \quad\beta_{k} = \beta = \frac{ \sqrt{\epsilon n }  R^{1/4}}{ 5\sqrt{d^{3/2} L \left( n^2+ R^2\right)}},
  \quad
  K = \Theta\!\left(\frac{n}{\epsilon}\!\left(\frac{d}{R}\right)^{\!3/2}\right),
  \]
  we can achieve $\mathbb{E}\!\left[\lVert\mathfrak{g}_{\alpha}(\widehat{x}_{K}) \rVert_{2}^{2}\right] \leq \epsilon$, where $\widehat{x}_K$ is uniformly chosen from $\{x_1,\ldots,x_K\}$ at random. Consequently, the oracle complexity to find an $\epsilon$-accurate solution is bounded by
  \begin{equation*}
    O\left(\frac{nd^{3/2}}{\epsilon \sqrt{R}}\right).
  \end{equation*}
Particularly, by letting $R=\lfloor n^{2/3}d\rfloor$, the corresponding oracle complexity is upper bounded by
\[
O\!\left(\frac{n^{2/3}d}{\epsilon}\right).
\]
  \end{corollary}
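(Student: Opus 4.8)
The plan is to specialize the general bound \eqref{eq:nc_main} of Theorem~\ref{tm:nc_main} to the concrete constants provided by Lemma~\ref{lm:param}, and then to calibrate the smoothing radius $\beta$ and the iteration count $K$ so that each term on the right-hand side is at most a constant fraction of $\epsilon$. Since $\widehat x_K$ is drawn uniformly from the generated iterates, $\mathbb{E}[\|\mathfrak g_\alpha(\widehat x_K)\|_2^2]$ equals, up to a harmless $O(1/K)$ boundary term, the averaged stationarity measure on the left of \eqref{eq:nc_main}, so it suffices to control that average. I would then invoke Lemma~\ref{lm:param} to substitute $\sigma\in[R/(nd),2R/(nd)]$ and $\nu=nd\sigma$ everywhere. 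Taking $\sigma=R/(nd)$ shows that the step size $\alpha=\sqrt R\,\sigma/(5\sqrt d L)$ of Theorem~\ref{tm:nc_main} coincides with the stated $\alpha=\frac{1}{5nL}(R/d)^{3/2}$, turns the prefactor $\frac{20\sqrt d L}{K\sqrt R\sigma}$ into $\Theta(nd^{3/2}L/(KR^{3/2}))$, and confirms that the standing hypothesis $R\le d/\sigma^2$ of Theorem~\ref{tm:nc_main} is exactly the assumed $R\le n^{2/3}d$.

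Next I would bound the three quantities inside the parenthesis of \eqref{eq:nc_main}. The optimality gap is immediate, $h(x_0)-h(x^\ast)=h(0)-h(x^\ast)\le V$. For the initialization term, $J_0=0$ and $x_0=0$ give $\|J_0-\nabla F(x_0)\|_F^2=\sum_{i=1}^n\|\nabla f_i(0)\|_2^2\le nLV$ using the hypothesis $L^{-1}\|\nabla f_i(0)\|_2^2\le V$. The smoothing-bias term is $\sum_{k=0}^K C_{\ref{counter:nc_final}}(\beta_k)=(K+1)\,C_{\ref{counter:nc_final}}(\beta)$ under $\beta_k\equiv\beta$; after multiplying by the prefactor and substituting $\sigma,\nu$ it collapses to an expression of order $\beta^2 d^2 L^2(n^2d^2+R^2)/R^2$, and the prescribed $\beta$ is chosen exactly so that this contribution is $O(\epsilon)$.

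Finally I would balance the surviving terms. Multiplying the prefactor $\Theta(nd^{3/2}L/(KR^{3/2}))$ by the optimality gap $\le V$ forces $K=\Theta\!\big(\tfrac{n}{\epsilon}(d/R)^{3/2}\big)$, which is the binding requirement in the regime $R\ge d$ that contains the optimal choice; the initialization term, which carries an extra factor $\sqrt{d/R}\le1$ there, is dominated under the same $K$. The oracle complexity is then $K$ times the $\Theta(R)$ average per-iteration query cost of the implementations --- $g_k$ uses $2R$ evaluations and the expected cost of refreshing $J_k$ is $\nu=nd\sigma=\Theta(R)$ --- so it equals $O(nd^{3/2}/(\epsilon\sqrt R))$. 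Since this decreases in $R$, taking $R=\lfloor n^{2/3}d\rfloor$, the largest value permitted by $R\le d/\sigma^2$, gives the advertised $O(n^{2/3}d/\epsilon)$.

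The delicate part is the calibration of $\beta$: one must track how the substitutions of Lemma~\ref{lm:param} propagate through both $C_{\ref{counter:nc_final}}(\beta_k)$ and the prefactor, so that the aggregated $(K+1)$-fold bias stays $O(\epsilon)$ without inflating the iteration count, while simultaneously verifying that the initialization term never becomes the binding constraint at the optimizing $R$. Managing these explicit $n,d,R$ dependencies consistently, rather than the routine algebra, is the main obstacle.
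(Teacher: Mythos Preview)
Your proposal is correct and follows exactly the approach the paper intends: the corollary is stated without an explicit proof because it is meant to be read off Theorem~\ref{tm:nc_main} once one substitutes the values of $\sigma$ and $\nu$ supplied by Lemma~\ref{lm:param}, which is precisely what you do. Your identification of the step size, the condition $R\le d/\sigma^2\Leftrightarrow R\le n^{2/3}d$, the bound $\|J_0-\nabla F(x_0)\|_F^2\le nLV$, the $\Theta(R)$ per-iteration query cost, and the balancing of the three terms all match the intended derivation; your remark that the optimality-gap term is the binding one in the regime $R\ge d$ (which contains the optimizing $R=\lfloor n^{2/3}d\rfloor$) is a correct and careful observation that the paper leaves implicit.
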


Corollary~\ref{corollary:nonconvex_complexity1} shows that, in the non-convex case, the overall oracle complexity bound is dependent on $R$, and is decreasing in $R$ until $R$ hits the upper bound $R\leq n^{2/3}d$ that results from the condition $R\leq d/\sigma^2$. Thus, if we set $R = 1$, the resulting complexity is still inferior to the $O(nd/\epsilon)$ oracle complexity of the full-batch zeroth-order algorithms. Fortunately, we can employ proper batch sampling $R=\lfloor n^{2/3}d\rfloor$ to get a better complexity bound.

In this case, the oracle complexity is better than the full-batch algorithm without variance reduction, while the number of oracle queries per iteration is $\Theta(n^{2/3} d)$, which is still less than the full-batch algorithm.
    
    The $O(n^{2/3}d/\epsilon)$ complexity is the same as that of the VR-SZD algorithm in~\cite{rando2025structured}, but their algorithm requires a maximum batch of size $\Theta(nd)$. The ZO-ProxSAGA algorithm~\cite{huang2019faster} uses a similar incremental update scheme as Implementation~I of our algorithm; however, their analysis assumes bounded gradient $\|\nabla f_i(x)\|_{2}^2 \leq \sigma^2$ for all $x$, and only convergence with limited precision $O(d\sigma^2)$ is established.
    
    Variance reduced algorithms that use biased estimators such as SPIDER~\cite{fang2018spider} and SARAH~\cite{nguyen2017sarah} are known to outperform those with unbiased estimators under non-convex settings. For zeroth-order problems, the SPIDER-SZO~\cite{fang2018spider} achieves a better oracle complexity of $O\!\left(d \min\left\{\frac{n^{1/2}}{\epsilon}, \frac{1}{\epsilon^{3/2}}\right\}\right)$; however, this result is restricted to unconstrained problems.
 \section{Convergence Analysis}
\label{section:proof}
In this section, we provide proof outlines for the theorems presented in Section~\ref{section:result}. We shall let $\mathcal{F}_k$ denote the $\sigma$-algebra generated by $(\omega_\tau,\mathcal{R}_\tau,\mathcal{S}_\tau)$ for all $\tau< k$. Note that $x_k$ and $J_k$ are $\mathcal{F}_{k}$-measurable.

We first present some auxiliary technical lemmas. The following lemma bounds the error of the zeroth-order estimate $G_{\beta_{k}}(x_k; \mathcal{S}_{k})$ with respect to $P_k (\nabla F(x_k) )$.
\begin{lemma}
  \label{lm:projected_bias}
For each $k$, we have
    \begin{equation}
    \label{eq:sc_projected_bias}
      \left\lVert G_{\beta_{k}}(x_k; \mathcal{S}_{k}) - P_k(\nabla F(x_k)) \right\rVert^{2}_{F}
      \leq \frac{ \lvert \mathcal{S}_{k}\rvert L^{2} \beta_{k}^{2}}{4}.
    \end{equation}
\end{lemma}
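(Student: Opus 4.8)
The plan is to prove the bound pathwise, for each realized $\mathcal{S}_k$, by reducing the matrix estimation error to a sum of scalar finite-difference errors. First I would write out the difference $G_{\beta_k}(x_k;\mathcal{S}_k) - P_k(\nabla F(x_k))$ explicitly using the definitions \eqref{eq:biG_def} and \eqref{eq:alg_p_def}. Since $\nabla F(x_k)e_i = \nabla f_i(x_k)$, each summand indexed by $(i,u)\in\mathcal{S}_k$ becomes $\bigl(\widehat\nabla_u f_i(x_k;\beta_k) - uu^T\nabla f_i(x_k)\bigr)e_i^T$. Substituting the definition \eqref{eq:fm_2p} of the two-point estimator, I would observe that this equals $c_{i,u}\,u\,e_i^T$, where the scalar $c_{i,u} \defeq \frac{f_i(x_k+\beta_k u)-f_i(x_k)}{\beta_k} - u^T\nabla f_i(x_k)$ is exactly the error of the finite-difference approximation to the directional derivative of $f_i$ along $u$. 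Thus the whole difference is a sum of rank-one matrices $\sum_{(i,u)\in\mathcal{S}_k} c_{i,u}\,u\,e_i^T$.

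Next I would expand the squared Frobenius norm of this sum. The key algebraic identity is $\langle u\,e_i^T,\, v\,e_j^T\rangle = (u^Tv)(e_i^Te_j)$, so cross-terms survive only when $i=j$. Among same-index terms, the orthonormality guaranteed by the second bullet of Assumption~\ref{as:algorithm_rand_var} forces $u^Tv=0$ for distinct directions in $\mathcal{U}_{k,i}$ and $\|u\|_2^2=1$ for each direction. Hence every off-diagonal contribution vanishes and the squared Frobenius norm collapses to the diagonal sum $\sum_{(i,u)\in\mathcal{S}_k} c_{i,u}^2$. This cancellation is precisely what the orthogonality assumption is designed to enable.

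Finally I would bound each $c_{i,u}$ via the standard consequence of $L$-smoothness. Applying Assumption~\ref{as:smooth} to $f_i$ with the displacement $\beta_k u$ and using $\|u\|_2=1$ gives $\lvert f_i(x_k+\beta_k u)-f_i(x_k)-\beta_k u^T\nabla f_i(x_k)\rvert \le \tfrac{L}{2}\beta_k^2$, and dividing through by $\beta_k$ yields $\lvert c_{i,u}\rvert \le \tfrac{L\beta_k}{2}$, so $c_{i,u}^2 \le \tfrac{L^2\beta_k^2}{4}$. Summing over the $\lvert\mathcal{S}_k\rvert$ pairs in $\mathcal{S}_k$ then produces the claimed bound $\tfrac{\lvert\mathcal{S}_k\rvert L^2\beta_k^2}{4}$.

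I expect no serious obstacle, as this lemma is essentially a bookkeeping computation; the only point requiring care is the cross-term cancellation, which depends crucially on both the orthonormality of $\mathcal{U}_{k,i}$ and the disjointness of the columns selected for distinct $i$. Without this structure the Frobenius norm would pick up additional inner-product terms between directions, inflating the bound.
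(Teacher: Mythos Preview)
Your proposal is correct and follows essentially the same approach as the paper: write the difference as a sum of rank-one terms $c_{i,u}\,u\,e_i^T$, use column orthogonality (via $e_i^Te_j$) together with the orthonormality of $\mathcal{U}_{k,i}$ from Assumption~\ref{as:algorithm_rand_var} to kill all cross-terms, and then bound each scalar $|c_{i,u}|\le L\beta_k/2$ by $L$-smoothness. The only cosmetic difference is that the paper packages the per-term bound as a separate lemma proved via the integral form of smoothness, whereas you invoke the descent-lemma inequality directly.
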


Then, the next lemma shows that our gradient estimator $g_k$ is approximately unbiased.

\begin{lemma}
  \label{lm:estimator_bias}
  For each $k$, we have
    \begin{equation}
      \label{eq:sc_estimator_bias}
     \left\lVert \mathbb{E}\Econd{g_{k}}{\mathcal{F}_{k}} - \nabla f(x_{k})\right\rVert_{2}^{2}
      \leq \frac{d L^{2} \beta_k^{2}}{4}.
    \end{equation}
\end{lemma}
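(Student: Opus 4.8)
The plan is to compute the conditional expectation $\mathbb{E}\Econd{g_k}{\mathcal{F}_k}$ in closed form, observe that the historical Jacobian term $J_k$ cancels out entirely (so that the estimator is unbiased up to the intrinsic smoothing error, regardless of how well $J_k$ tracks $\nabla F$), and then bound the residual bias of the resulting zeroth-order smoothing estimator.

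First I would condition on $\mathcal{F}_k$, which fixes $x_k$ and $J_k$, leaving only the batch $\mathcal{R}_k$ random. Since each index $i$ appearing in a pair $(i,u)\in\mathcal{R}_k$ is marginally uniform on $[n]$ and independent of its associated direction $u$, and since for both the basis and the spherical sampling schemes one has $\mathbb{E}[uu^T]=\frac1d I$, I would compute $\mathbb{E}[d\,uu^T J_k^{(i)}\mid\mathcal{F}_k]=\frac1n J_k\mathbf{1}$ and $\mathbb{E}[\widehat\nabla_u f_i(x_k;\beta_k)\mid\mathcal{F}_k]=\mathbb{E}_u[\widehat\nabla_u f(x_k;\beta_k)]$, where I use linearity to replace the average over component functions by $f$ itself. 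Summing the $R$ identically distributed terms in the definition~\eqref{eq:alg_gk_def} of $g_k$, the control-variate term $\frac1n J_k\mathbf{1}$ is exactly cancelled, leaving
\[
\mathbb{E}\Econd{g_k}{\mathcal{F}_k}=d\,\mathbb{E}_u\!\left[\widehat\nabla_u f(x_k;\beta_k)\right].
\]

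Next, using $\mathbb{E}[uu^T]=\frac1d I$ once more, I would rewrite the exact gradient as $\nabla f(x_k)=d\,\mathbb{E}_u[uu^T\nabla f(x_k)]$, so that the bias equals $d\,\mathbb{E}_u[\delta(u)]$ with $\delta(u)\defeq\widehat\nabla_u f(x_k;\beta_k)-uu^T\nabla f(x_k)$. The key structural observation is that $\delta(u)$ is parallel to $u$: one can write $\delta(u)=s(u)\,u$ with scalar $s(u)=\beta_k^{-1}\big(f(x_k+\beta_k u)-f(x_k)-\beta_k u^T\nabla f(x_k)\big)$, and $L$-smoothness of $f$ (equivalently Lemma~\ref{lm:direc_bias} applied to $f$) gives $|s(u)|\leq \tfrac12 L\beta_k\|u\|_2^2=\tfrac12 L\beta_k$ for every unit $u$.

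The main obstacle is that naively bounding $\|\mathbb{E}_u[\delta(u)]\|_2$ by $\mathbb{E}_u\|\delta(u)\|_2\leq\frac12 L\beta_k$ via Jensen, then multiplying by $d$, yields $\frac{d^2 L^2\beta_k^2}{4}$ --- worse than the claimed bound by a factor of $d$. To recover the tight bound I would instead use a self-bounding (duality) argument that exploits both the parallel structure and the averaging: writing $w=\mathbb{E}_u[\delta(u)]$,
\[
\|w\|_2^2=\mathbb{E}_u\big[s(u)\langle w,u\rangle\big]\leq \tfrac12 L\beta_k\,\mathbb{E}_u|\langle w,u\rangle|\leq \tfrac12 L\beta_k\big(w^T\mathbb{E}_u[uu^T]w\big)^{1/2}=\tfrac{L\beta_k}{2\sqrt d}\|w\|_2,
\]
where the middle inequality is Cauchy--Schwarz. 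This gives $\|w\|_2\leq \frac{L\beta_k}{2\sqrt d}$, whence
\[
\left\lVert\mathbb{E}\Econd{g_k}{\mathcal{F}_k}-\nabla f(x_k)\right\rVert_2^2=\|d\,w\|_2^2=d^2\|w\|_2^2\leq \frac{dL^2\beta_k^2}{4},
\]
as required. I would note that this computation covers the basis sampling case verbatim, since there too $\mathbb{E}_u[uu^T]=\frac1d I$.
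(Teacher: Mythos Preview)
Your proof is correct but proceeds differently from the paper. Both arguments begin identically by showing that the $J_k$ terms cancel in the conditional expectation, leaving $\mathbb{E}\Econd{g_k}{\mathcal{F}_k}=d\,\mathbb{E}_u[\widehat\nabla_u f(x_k;\beta_k)]$. From there the paper splits into two cases: for coordinate sampling it expands the expectation as $\sum_{j=1}^d\big(\widehat\nabla_{e_j}f(x_k;\beta_k)-e_je_j^T\nabla f(x_k)\big)$ and uses orthogonality of the $e_j$ to turn the squared norm into a sum of $d$ terms, each bounded by $L^2\beta_k^2/4$ via Lemma~\ref{lm:direc_bias}; for spherical sampling it invokes an external smoothing lemma \cite[Lemma~14]{malik2020derivative}, which actually yields the sharper bound $L^2\beta_k^2/4$ before relaxing to the stated $dL^2\beta_k^2/4$. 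Your self-bounding argument---writing $w=\mathbb{E}_u[s(u)u]$, then using $\|w\|_2^2=\mathbb{E}_u[s(u)\langle w,u\rangle]$ together with $\mathbb{E}_u[uu^T]=d^{-1}I$ to get $\|w\|_2\leq L\beta_k/(2\sqrt d)$---handles both sampling schemes at once and is entirely self-contained, at the mild cost of not exposing the tighter constant in the spherical case.
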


The proofs of Lemmas~\ref{lm:projected_bias} and~\ref{lm:estimator_bias} are postponed to Appendix~\ref{appendix:auxiliary}.

We also need the following lemma that summarizes some well-known properties for smooth and convex functions.

\begin{lemma}
  \label{lm:smoothness}
Suppose Assumption~\ref{as:smooth} holds and each $f_i$ is convex. Then for all $x,y\in\mathbb{R}^d$, we have
\begin{equation}
\label{eq:sc_smoothness}
      \langle \nabla f(x) - \nabla f(y), x-y\rangle
      \geq \frac{1}{nL} \lVert \nabla F(x) - \nabla F(y)\rVert^{2}_{2},
    \end{equation}
  \begin{equation}
  \label{eq:sc_bregman_bound}
    f(x) - f(y) - \langle \nabla f(y), x-y\rangle  
    \geq \frac{1}{2nL}\|\nabla F(x) - \nabla F(y)\|_F^2.
  \end{equation}
Furthermore, if $f$ is $\mu$-strongly convex, then for all $x,y\in\mathbb{R}^d$,
\begin{equation}
\label{eq:sc_inner_prod}
\langle\nabla f(x)-\nabla f(y),x-y\rangle
\geq\frac{\mu}{2}\|x-y\|_2^2+\frac{1}{2nL}\|\nabla F(x)-\nabla F(y)\|_F^2.
\end{equation}
\end{lemma}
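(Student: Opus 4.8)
The plan is to reduce all three inequalities to standard per-component properties of $L$-smooth convex functions and then average over $i\in[n]$, exploiting the two identities $\nabla f(x)=\frac{1}{n}\sum_{i=1}^n\nabla f_i(x)$ and $\|\nabla F(x)-\nabla F(y)\|_F^2=\sum_{i=1}^n\|\nabla f_i(x)-\nabla f_i(y)\|_2^2$, the latter holding because the columns of $\nabla F$ are exactly the $\nabla f_i$.

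First I would recall the two classical facts that hold for every $L$-smooth convex function $g$ on $\mathbb{R}^d$: the Bregman lower bound $g(x)-g(y)-\langle\nabla g(y),x-y\rangle\geq\frac{1}{2L}\|\nabla g(x)-\nabla g(y)\|_2^2$, and the co-coercivity of the gradient $\langle\nabla g(x)-\nabla g(y),x-y\rangle\geq\frac{1}{L}\|\nabla g(x)-\nabla g(y)\|_2^2$. The co-coercivity follows from the Bregman bound by adding the bound for the pair $(x,y)$ to the bound for $(y,x)$. Applying the co-coercivity bound to each $f_i$, summing over $i$, and dividing by $n$ yields \eqref{eq:sc_smoothness}; applying the Bregman bound to each $f_i$ and averaging in the same way yields \eqref{eq:sc_bregman_bound}.

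For the strongly convex inequality~\eqref{eq:sc_inner_prod}, I would not re-derive everything from the component functions, but instead combine \eqref{eq:sc_smoothness}, already established above, with the defining monotonicity inequality of $\mu$-strong convexity, namely $\langle\nabla f(x)-\nabla f(y),x-y\rangle\geq\mu\|x-y\|_2^2$. Taking the convex combination of these two inequalities, each with weight $\tfrac{1}{2}$, produces exactly $\langle\nabla f(x)-\nabla f(y),x-y\rangle\geq\frac{\mu}{2}\|x-y\|_2^2+\frac{1}{2nL}\|\nabla F(x)-\nabla F(y)\|_F^2$, as claimed.

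Since every ingredient is a textbook property of smooth and (strongly) convex functions, there is no genuine obstacle here; the only points requiring care are bookkeeping ones. I would state the Frobenius-norm identity for $\nabla F$ explicitly so that the averaging over $n$ correctly produces the $\frac{1}{nL}$ and $\frac{1}{2nL}$ prefactors, and I would verify that the weight-$\tfrac{1}{2}$ split in the final step reproduces both the $\frac{\mu}{2}$ and the $\frac{1}{2nL}$ coefficients simultaneously. I would also note that \eqref{eq:sc_smoothness}, although written with a $\|\cdot\|_2$ subscript, refers to the Frobenius norm of the matrix $\nabla F(x)-\nabla F(y)$, consistent with \eqref{eq:sc_bregman_bound}.
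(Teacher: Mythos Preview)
Your proposal is correct and follows essentially the same approach as the paper: apply the standard per-component smoothness inequalities (co-coercivity and the Bregman lower bound) to each $f_i$, average over $i$, and identify the sum $\sum_i\|\nabla f_i(x)-\nabla f_i(y)\|_2^2$ with $\|\nabla F(x)-\nabla F(y)\|_F^2$. The only minor variation is in deriving \eqref{eq:sc_inner_prod}: you average \eqref{eq:sc_smoothness} with the gradient-monotonicity form of $\mu$-strong convexity, whereas the paper adds \eqref{eq:sc_bregman_bound} to the function-value form $f(y)\geq f(x)+\langle\nabla f(x),y-x\rangle+\frac{\mu}{2}\|y-x\|_2^2$; both routes are equally standard and yield the same bound.
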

\begin{proof}
The inequalities ~\eqref{eq:sc_smoothness} and~\eqref{eq:sc_bregman_bound} directly follow by applying {\cite[Eq. (2.1.10) and (2.1.11)]{nesterov2018lectures}} to each $f_i$, taking the sum and noting that $\sum_{i=1}^n\lVert\nabla f_i(x)-\nabla f_i(y)\rVert^2=\lVert\nabla F(x)-\nabla F(y)\rVert_F^2$. The last inequality follows by adding~\eqref{eq:sc_bregman_bound} with
\[
f(y)\geq f(x)+\langle \nabla f(x),y-x\rangle+\frac{\mu}{2}\lVert y-x\rVert_2^2,
\]
which follows from the $\mu$-strong convexity of $f$.
\end{proof}

 \subsection{Analysis for the Strongly Convex Case}

In this subsection, we shall always assume that Assumptions~\ref{as:smooth} and~\ref{as:algorithm_rand_var} hold, each $f_i$ is convex, and $f$ is $\mu$-strongly convex.

We start by giving a bound on the expected deviation of the gradient estimator $g_{k}$ from the gradient at the optimal point $x^{\ast}$. \begin{lemma}
  \label{lm:sc_gradient_bound}
For each $k=0,1,2,\ldots$, we have
  \begin{equation}
    \mathbb{E}\Econd*{\|g_k - \nabla f(x^{\ast})\|_2^2}{\mathcal{F}_{k}}
    \leq
    \left(\frac{2}{n}\!+\! \frac{8d}{n R}\right)\left\| \nabla F(x_{k}) - \nabla F(x^{\ast})\right\|_{F}^2
    + \frac{8d}{nR} \| J_k -\nabla F(x^{\ast}) \|_F^2
    + d^2 L^2 \beta_k^2.
   \label{eq:sc_gradient_bound}
  \end{equation}
    
\end{lemma}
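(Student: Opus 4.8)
The plan is to begin from the conditional bias--variance decomposition
\begin{equation*}
\mathbb{E}\Econd*{\|g_k-\nabla f(x^\ast)\|_2^2}{\mathcal{F}_k}
=\bigl\|\mathbb{E}\Econd*{g_k}{\mathcal{F}_k}-\nabla f(x^\ast)\bigr\|_2^2
+\operatorname{Var}\Vcond*{g_k}{\mathcal{F}_k}
\end{equation*}
and estimate the two terms separately. For the squared bias I would insert $\nabla f(x_k)$ and use $\|a+b\|_2^2\le 2\|a\|_2^2+2\|b\|_2^2$: the term $\|\mathbb{E}\Econd*{g_k}{\mathcal{F}_k}-\nabla f(x_k)\|_2^2$ is at most $dL^2\beta_k^2/4$ by Lemma~\ref{lm:estimator_bias}, while for $\|\nabla f(x_k)-\nabla f(x^\ast)\|_2^2$ I would use the identity $\nabla f(x)=\tfrac1n\nabla F(x)\mathbf{1}$ together with Jensen's inequality to obtain $\|\nabla f(x_k)-\nabla f(x^\ast)\|_2^2\le\tfrac1n\|\nabla F(x_k)-\nabla F(x^\ast)\|_F^2$. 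This already yields the $\tfrac2n$ coefficient attached to the Jacobian gap.

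The core of the argument is the variance term. Writing the stochastic part of $g_k$ as $\tfrac{d}{R}\sum_{j=1}^R Z_j$ with $Z_j=\widehat\nabla_{u_k^j}f_{i_k^j}(x_k;\beta_k)-u_k^j(u_k^j)^T J_k^{(i_k^j)}$, I would observe that, conditioned on the drawn indices $i_k^1,\dots,i_k^R$, each $Z_j$ is a function of its own independent direction $u_k^j$, so the $Z_j$ are conditionally independent; moreover, because the indices are sampled without replacement, the pairwise covariances are nonpositive. Concretely, letting $m_a$ denote the conditional mean of a single summand given that its index equals $a$ and $\bar Z=\tfrac1n\sum_a m_a$ its overall mean, one has $\sum_{a\ne b}\langle m_a-\bar Z,\,m_b-\bar Z\rangle=-\sum_a\|m_a-\bar Z\|_2^2\le 0$. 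Dropping these nonpositive cross terms preserves the full $\tfrac1R$ variance-reduction factor and gives $\operatorname{Var}\Vcond*{g_k}{\mathcal{F}_k}\le\tfrac{d^2}{R}\,\mathbb{E}\|Z_1\|_2^2$, where the expectation is over a single index $i\sim\mathrm{Unif}[n]$ and direction $u$.

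It then remains to bound $\mathbb{E}\|Z_1\|_2^2$. I would split $Z_1=(\widehat\nabla_u f_i(x_k;\beta_k)-uu^T\nabla f_i(x_k))+uu^T(\nabla f_i(x_k)-J_k^{(i)})$ and apply $\|a+b\|_2^2\le 2\|a\|_2^2+2\|b\|_2^2$. The first summand is the directional-derivative error $\bigl(\tfrac{f_i(x_k+\beta_k u)-f_i(x_k)}{\beta_k}-u^T\nabla f_i(x_k)\bigr)u$, which the $L$-smoothness Taylor estimate (as used in Lemma~\ref{lm:projected_bias}) bounds by $L^2\beta_k^2/4$ uniformly in $i,u$. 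For the second summand, using $\|u\|_2=1$ and the identity $\mathbb{E}_u[uu^T]=\tfrac1d I$---which holds both for $u$ uniform on $\{e_1,\dots,e_d\}$ and for $u$ uniform on $\mathbb{S}^{d-1}$---yields $\mathbb{E}\|uu^T(\nabla f_i(x_k)-J_k^{(i)})\|_2^2=\tfrac1{nd}\|\nabla F(x_k)-J_k\|_F^2$. Collecting these gives $\operatorname{Var}\Vcond*{g_k}{\mathcal{F}_k}\le\tfrac{d^2L^2\beta_k^2}{2R}+\tfrac{2d}{nR}\|\nabla F(x_k)-J_k\|_F^2$, and a final split $\|\nabla F(x_k)-J_k\|_F^2\le 2\|\nabla F(x_k)-\nabla F(x^\ast)\|_F^2+2\|J_k-\nabla F(x^\ast)\|_F^2$ produces coefficients of the claimed form (in fact $4d/(nR)$, which is within the stated $8d/(nR)$). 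The separate $\beta_k^2$ contributions $\tfrac{dL^2\beta_k^2}{2}$ and $\tfrac{d^2L^2\beta_k^2}{2R}$ are each at most $\tfrac{d^2L^2\beta_k^2}{2}$ since $d\ge1$ and $R\ge1$, so they combine into the stated $d^2L^2\beta_k^2$.

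I expect the variance step to be the main obstacle: one must argue carefully that the mixed sampling scheme---indices without replacement but directions i.i.d.---still delivers the $\tfrac1R$ reduction, which rests on the conditional-independence observation and the nonpositivity of the cross-covariances above. Everything else reduces to the $L$-smoothness Taylor bound and the second-moment identity $\mathbb{E}_u[uu^T]=\tfrac1d I$.
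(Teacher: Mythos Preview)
Your proposal is correct and follows essentially the same route as the paper. The paper's proof opens with the Young split $\|g_k-\nabla f(x^\ast)\|_2^2\le 2\|\nabla f(x_k)-\nabla f(x^\ast)\|_2^2+2\|g_k-\nabla f(x_k)\|_2^2$ and then invokes Lemma~\ref{lm:nc_grad_var_bound} for the second term, whereas you use the exact bias--variance decomposition; this is why you obtain $4d/(nR)$ rather than $8d/(nR)$. The core computation---bounding the second moment of $\tfrac{d}{R}\sum_j uu^T(\nabla f_i-J_k^{(i)})$ via $\mathbb{E}_u[uu^T]=d^{-1}I$ and dropping nonpositive cross terms---is identical to the paper's proof of Lemma~\ref{lm:nc_grad_var_bound}, and your treatment of the without-replacement covariances is in fact more explicit than what the paper writes.
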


We will give a recursive bound on $\mathbb{E}\Econd*{\|J_{k+1} - \nabla F(x^{\ast})\|_F^2}{\mathcal{F}_{k}}$ as well:

\begin{lemma}
For each $k=0,1,2,\ldots$, we have
  \label{lm:sc_jacobian_recur}
  \begin{equation}
    \mathbb{E}\Econd*{\|J_{k+1} - \nabla F(x^{\ast})\|_F^2}{\mathcal{F}_{k}}
    \leq \left(1\!-\! \frac{\sigma}{2}\right)\|J_k - \nabla F(x^{\ast})\|_{F}^{2}
    +2 \sigma\| \nabla F(x_k)  -\nabla F(x^{\ast})\|_{F}^{2} 
    +  \frac{ \nu nd L^{2} \beta_{k}^{2}}{\sigma}.
    \label{eq:sc_jacobian_recur}
  \end{equation}
\end{lemma}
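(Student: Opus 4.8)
The plan is to substitute the compact update rule into the squared Frobenius distance, take the conditional expectation given $\mathcal{F}_k$, and exploit both the orthogonal-projection structure of $P_k$ and the unbiasedness identity \eqref{eq:alg_omega_req1}. I would abbreviate $\Delta_k = J_k - \nabla F(x^\ast)$, $\Gamma_k = \nabla F(x_k) - \nabla F(x^\ast)$, and $M_k = J_k - \nabla F(x_k) = \Delta_k - \Gamma_k$, all of which are $\mathcal{F}_k$-measurable. Writing $E_k = G_{\beta_k}(x_k;\mathcal{S}_k) - P_k(\nabla F(x_k))$ and using the update $J_{k+1} = J_k + \omega_k(G_{\beta_k}(x_k;\mathcal{S}_k) - P_k(J_k))$ together with the linearity of $P_k$, the first step is the identity $J_{k+1} - \nabla F(x^\ast) = \Delta_k - \omega_k P_k(M_k) + \omega_k E_k$. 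This separates a deterministic-in-expectation ``main part'' $\Delta_k - \omega_k P_k(M_k)$ from the zeroth-order estimation error $\omega_k E_k$, whose magnitude is controlled by Lemma~\ref{lm:projected_bias}.

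The heart of the argument is evaluating $\mathbb{E}[\|\Delta_k - \omega_k P_k(M_k)\|_F^2 \mid \mathcal{F}_k]$ \emph{exactly}. The orthonormality requirement in Assumption~\ref{as:algorithm_rand_var} makes each column map $v \mapsto \sum_{u \in \mathcal{U}_{k,i}} uu^T v$ an orthogonal projection, so $P_k$ is self-adjoint and idempotent; in particular $\|P_k(M_k)\|_F^2 = \langle M_k, P_k(M_k)\rangle$. Expanding the square (using $\omega_k^2 = \omega_k$) and conditioning, the identity $\mathbb{E}[\omega_k P_k(A) \mid \mathcal{F}_k] = \sigma A$ — valid for $\mathcal{F}_k$-measurable $A$ because $(\omega_k,\mathcal{S}_k)$ is independent of $\mathcal{F}_k$ — converts both the cross term and the quadratic term into $\sigma$ times $\langle \Delta_k, M_k\rangle$ and $\|M_k\|_F^2$ respectively. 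Substituting $M_k = \Delta_k - \Gamma_k$, the terms linear in $\langle \Delta_k, \Gamma_k\rangle$ cancel, leaving the clean expression $(1-\sigma)\|\Delta_k\|_F^2 + \sigma\|\Gamma_k\|_F^2$. I expect this exact cancellation to be the key step, and it is precisely where the orthonormality assumption earns its keep: it is the projection \emph{identity} $\mathbb{E}[\omega_k\|P_k(M_k)\|_F^2\mid\mathcal{F}_k] = \sigma\|M_k\|_F^2$, rather than a mere inequality, that forces the cross terms to vanish.

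Finally, to reattach the error term I would apply Young's inequality $\|X+Y\|_F^2 \le (1+\rho)\|X\|_F^2 + (1+\rho^{-1})\|Y\|_F^2$ with $\rho = \sigma/2$. Since $\|E_k\|_F^2 \le |\mathcal{S}_k| L^2\beta_k^2/4$ by Lemma~\ref{lm:projected_bias} and $\omega_k^2 = \omega_k$, the error contributes $\mathbb{E}[\omega_k\|E_k\|_F^2 \mid \mathcal{F}_k] \le \nu L^2 \beta_k^2/4$, using $\nu = \mathbb{E}[\omega_k|\mathcal{S}_k|]$ and independence of $(\omega_k,\mathcal{S}_k)$ from $\mathcal{F}_k$. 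The choice $\rho = \sigma/2$ gives $(1+\rho)(1-\sigma) \le 1 - \sigma/2$ and $(1+\rho)\sigma \le 2\sigma$ for the two signal coefficients, while $(1+2/\sigma)\tfrac14 \le \tfrac{3}{4\sigma} \le \tfrac{nd}{\sigma}$ (using $\sigma \in (0,1)$ and $nd \ge 1$) absorbs the error coefficient into the claimed $\nu nd L^2\beta_k^2/\sigma$. Collecting these bounds yields exactly~\eqref{eq:sc_jacobian_recur}. The remaining work is routine algebra; the only points needing care are verifying the self-adjoint/idempotent properties of $P_k$ and confirming that $\sigma<1$ together with $nd\ge 1$ provides enough slack for the final coefficient comparisons.
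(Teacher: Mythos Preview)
Your proposal is correct and follows essentially the same approach as the paper's proof: the paper introduces the same decomposition $J_{k+1}-\nabla F(x^\ast)=Y_k+Z_k$ with $Y_k=\Delta_k-\omega_kP_k(M_k)$ and $Z_k=\omega_kE_k$, applies Young's inequality with the same parameter $\rho=\sigma/2$, and computes $\mathbb{E}[\|Y_k\|_F^2\mid\mathcal{F}_k]=(1-\sigma)\|\Delta_k\|_F^2+\sigma\|\Gamma_k\|_F^2$ using exactly the projection identity $\|P_k(A)\|_F^2=\langle P_k(A),A\rangle$ (stated as a separate lemma) that you identify as the key step. The final coefficient relaxations, including the slack from $nd\ge 1$ to absorb the error term, match as well.
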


The proofs of the two lemmas are postponed to Appendix~\ref{appendix:proof_strongly_convex}.

Now we are ready to prove our main result.

\begin{proof}[Proof of Theorem~\ref{tm:sc_main}]
  We start by the non-expansiveness of the proximal operator
    \begin{align*}
      \mathbb{E}\Econd*{\|x_{k+1}-x^*\|_{2}^{2}}{\mathcal{F}_{k}} 
      &= \mathbb{E}\Econd*{\|\operatorname{prox}_{\alpha R}(x_{k}-\alpha g_{k})-\operatorname{prox}_{\alpha R}(x^*-\alpha\nabla f(x^*))\|_{2}^{2}}{\mathcal{F}_k} \\
      &\leq \mathbb{E}\Econd*{\|x_{k}-\alpha g_{k}-(x^*-\alpha\nabla f(x^*))\|_2^{2}}{\mathcal{F}_k} \\
      &= \|x_{k}-x^*\|_{2}^{2}
      -2\alpha\mathbb{E}\Econd*{\langle g_{k}-\nabla f(x^\ast), x_{k}-x^\ast\rangle }{\mathcal{F}_k} \\
      &\quad +\alpha^{2}\mathbb{E}\Econd*{\|g_{k}-\nabla f(x^*)\|_{2}^{2}}{\mathcal{F}_k}. \numberthis \label{eq:sc_nonexpan}
\end{align*}
For the second term, we give the following bound
\begin{align*}
      & -2\alpha\mathbb{E}\Econd{\langle g_{k}-\nabla f(x^\ast), x_{k}-x^\ast\rangle }{\mathcal{F}_k} \\
      ={} & -2\alpha\langle \nabla f(x_{k})-\nabla f(x^\ast), x_{k}-x^\ast\rangle
      + 2\alpha \langle \nabla f(x_{k})-\mathbb{E}\Econd{g_k}{\mathcal{F}_k}, x_{k}-x^\ast\rangle\\
      \stackrel{\eqref{eq:sc_inner_prod}}{\leq}{} &-\alpha \mu \lVert x_{k} - x^{\ast}\rVert^{2}_{2}
      - \frac{\alpha}{nL} \lVert \nabla F(x_k) - \nabla F(x^{\ast})\rVert_{F}^{2}
      + 2\alpha \langle \nabla f(x_{k})-\mathbb{E}\Econd{g_k}{\mathcal{F}_k}, x_{k}-x^\ast\rangle
      \numberthis \label{eq:sc_4convex}
      \\ 
      \leq{} &
      -\frac{\alpha \mu }{2} \lVert x_{k} - x^{\ast}\rVert^{2}_{2}
      - \frac{\alpha}{nL} \lVert \nabla F(x_k) - \nabla F(x^{\ast})\rVert_{F}^{2} 
      + \frac{2 \alpha}{\mu} \lVert \nabla f(x_{k})-\mathbb{E}\Econd{g_k}{\mathcal{F}_k}\rVert_{2}^{2}
      \\ 
      \stackrel{\eqref{eq:sc_estimator_bias}}{\leq}{} &
       -\frac{\alpha \mu }{2} \lVert x_{k} - x^{\ast}\rVert^{2}_{2}
      - \frac{\alpha}{nL} \lVert \nabla F(x_k) - \nabla F(x^{\ast})\rVert_{F}^{2} 
      + \frac{ d L^{2} \beta_{k}^{2}  \alpha}{2 \mu},
      \numberthis \label{eq:sc_cross_term}
\end{align*}
where the third step follows from Young's inequality.
For the last term of~\eqref{eq:sc_nonexpan}, we can apply Lemma~\ref{lm:sc_gradient_bound}, which leads to
\begin{align*}
      &\mathbb{E}\Econd*{\|x_{k+1}-x^*\|_{2}^{2}}{\mathcal{F}_{k}} \\
      \leq{} & \left(1- \frac{\alpha \mu}{2}\right) \|x_{k}-x^*\|_{2}^{2} 
      + \frac{\alpha}{n}\left( 2 \alpha + \frac{8d \alpha}{R}- \frac{1}{L}\right)
      \lVert \nabla F(x_k) - \nabla F(x^{\ast})\rVert_{F}^{2}\\
      & + \frac{8 d \alpha^{2}}{n R}
      \| J_{k} - \nabla F(x^{\ast}) \|_F^2 
      + L^{2} \beta_{k}^{2} d \alpha \left( \frac{1}{2 \mu} + \alpha d\right)
      \numberthis \label{eq:sc_distance_contra}.
\end{align*}
Now we define the Lyapunov function
\begin{equation*}
  \Phi_{1}^{k} = \lVert x_k - x^{\ast}\rVert^{2}_{2} + c_{1} \lVert J_k - \nabla F(x^{\ast})\rVert^{2}_{F},
\end{equation*}
where
$
c_{1} = \frac{8 d R}{L^2 n \sigma  (36 d+R)^2}$.
Using~\eqref{eq:sc_distance_contra} together with Lemma~\ref{lm:sc_jacobian_recur}, we can give the following bound for $\Phi_{1}^{k+1}$:
\stepcounter{betacounter}
\begin{align*}
  \mathbb{E}\Econd*{\Phi_{1}^{k+1}}{\mathcal{F}_k}
  &=
  \mathbb{E}\Econd*{\lVert x_{k+1} - x^{\ast}\rVert^{2}_{2}}{\mathcal{F}_k}
  +c_{1}\mathbb{E}\Econd*{\lVert J_{k+1}- \nabla F(x^{\ast})\rVert^{2}_{F}}{\mathcal{F}_{k}}
  \\ &\leq 
  \left(1 - \frac{\alpha \mu}{2}\right) \|x_{k}-x^*\|_{2}^{2} 
  + \left( \frac{ 2 \alpha^{2}}{n} + \frac{8 d \alpha^{2}}{nR} - \frac{\alpha}{nL} + 2 c_{1} \sigma\right)
  \lVert \nabla F(x_k) - \nabla F(x^{\ast})\rVert_{F}^{2}\\
  &\quad + \left(\frac{8 d\alpha^{2}}{nR} + c_{1}\left(1-\frac{\sigma}{2}\right)\right)
  \| J_{k} - \nabla F(x^{\ast}) \|_F^2 
  + C_{\ref{counter:sc_final}}(\beta_{k}).
\end{align*}
Now with our parameter choice of $c_{1}$ and $\alpha$, we know that
\[
\frac{ 2 \alpha^{2}}{n} + \frac{8 d \alpha^{2}}{nR} - \frac{\alpha}{nL} + 2 c_{1} \sigma= 0,
\quad \text{and}\quad
\frac{8 d\alpha^{2}}{nR} + c_{1}\left(1-\frac{\sigma}{2}\right) = \left(1- \frac{\sigma}{4}\right) c_{1}.
\]
Thus
\begin{align*}
  \mathbb{E}\Econd*{\Phi_{1}^{k+1}}{\mathcal{F}_k}
  &\leq
  \left(1- \frac{\alpha \mu}{2}\right) \|x_{k}-x^*\|_{2}^{2} 
  + \left(1- \frac{\sigma}{4}\right)c_{1} 
  \| J_{k} - \nabla F(x^{\ast}) \|_F^2 + C_{\ref{counter:sc_final}}(\beta_{k})
  \\&\leq
   (1- \kappa) \Phi_{1}^{k} + C_{\ref{counter:sc_final}}(\beta_{k}).
\end{align*}
By taking full expectation and telescoping, we obtain
\begin{align*}
  \mathbb{E}\!\left[\Phi_{1}^{K}\right]
  &\leq 
  (1-\kappa)^{K} \Phi_{1}^{0}
  + \sum_{k=0}^{K-1} (1-\kappa_{1})^{k} C_{\ref{counter:sc_final}}(\beta_{k})
\\&=
  (1-\kappa)^{K} 
  \left( \lVert x_{0} - x^{\ast} \rVert_2^2 
    +  c_{1} \lVert J_0 - \nabla F(x^{\ast})\rVert_{F}^{2}\right)
  + \sum_{k=1}^{K} (1-\kappa)^{K-k} C_{\ref{counter:sc_final}}(\beta_{k-1})
  \numberthis \label{eq:sc_lyapunov_recur}.
\end{align*}

Using $\mathbb{E}[\lVert x_k - x^{\ast}\rVert_{2}^{2}] 
  \leq
  \mathbb{E}[\Phi_{1}^{K}]$, we get the desired bound~\eqref{counter:sc_final}.
\end{proof}

 \subsection{Analysis for the Convex Case}
Throughout this subsection, we assume that Assumption~\ref{as:smooth} holds, and each $f_i$ is convex. To start our analysis for the convex case, 
we shall establish a recursive bound on the distance from the iterate $x_k$ to the optimal point; its proof can be found in Appendix~\ref{sec:app_proof_lm_c_alt_distance}.
\begin{lemma}
\label{lm:c_alt_distance}
For each iteration $k$, we have
    \begin{align*}
  \mathbb{E}\Econd*{\lVert 
    x_{k+1} \!-\! x^{\ast}
  \rVert_{2}^{2}}{\mathcal{F}_k}
  \leq{} &
  \!\left(1\!+\! \frac{\alpha L \beta_k}{C_2}\right)\! \lVert 
    x_{k} \!-\! x^{\ast}
  \rVert_{2}^{2} 
  - 2 \alpha\,\mathbb{E}\Econd*{
    h(x_{k+1}) \!-\! h(x^{\ast})
  }{\mathcal{F}_{k}}
  + \frac{8 \alpha^2 d}{nR } \| J_{k} \!-\! \nabla F(x^{\ast})\|_F^2 
       \\ & +\frac{8 \alpha^2 d}{n R} \| \nabla F(x_k) \!-\! \nabla F(x^{\ast})\|_F^2  + \alpha L\beta_kd \left(\frac{C_2}{4}+ \alpha L \beta_kd\right) . 
         \numberthis \label{eq:c_alt_distance}
  \end{align*}
\end{lemma}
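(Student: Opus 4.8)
The plan is to start from the defining optimality of the proximal step $x_{k+1}=\operatorname{prox}_{\alpha\psi}(x_k-\alpha g_k)$. Writing its first-order condition as $\tfrac1\alpha(x_k-\alpha g_k-x_{k+1})\in\partial\psi(x_{k+1})$ and pairing it with the subgradient inequality for $\psi$ at $x^\ast$ yields the standard three-point (prox-descent) inequality, which already produces the target term $h(x_{k+1})-h(x^\ast)$. For the smooth part I would bound $f(x_{k+1})\le f(x_k)+\langle\nabla f(x_k),x_{k+1}-x_k\rangle+\tfrac L2\|x_{k+1}-x_k\|_2^2$ by $L$-smoothness, and crucially replace bare convexity of the gap $f(x_k)-f(x^\ast)$ by the stronger co-coercivity bound \eqref{eq:sc_bregman_bound}. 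Multiplying through by $2\alpha$ and rearranging then gives a recursion of the shape
\[
\|x_{k+1}-x^\ast\|_2^2+2\alpha\big(h(x_{k+1})-h(x^\ast)\big)\le\|x_k-x^\ast\|_2^2+2\alpha\langle\nabla f(x_k)-g_k,\,x_{k+1}-x^\ast\rangle+(\alpha L-1)\|x_{k+1}-x_k\|_2^2-\tfrac{\alpha}{nL}\|\nabla F(x_k)-\nabla F(x^\ast)\|_F^2,
\]
where the final negative term is the dividend of using co-coercivity rather than plain convexity, and will later absorb a surplus.

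The central difficulty is that $x_{k+1}$ depends on $g_k$, so the inner product $\langle\nabla f(x_k)-g_k,x_{k+1}-x^\ast\rangle$ cannot be handled by taking conditional expectation and invoking approximate unbiasedness directly. I would split it as $\langle\nabla f(x_k)-g_k,x_{k+1}-x_k\rangle+\langle\nabla f(x_k)-g_k,x_k-x^\ast\rangle$. The first, correlated piece I would absorb by Young's inequality into the available $(\alpha L-1)\|x_{k+1}-x_k\|_2^2$; this is legitimate precisely because the choice $\alpha=\tfrac{R}{2L(40d+R)}$ forces $\alpha L<\tfrac12$, so the coefficient of $\|x_{k+1}-x_k\|_2^2$ is strictly negative and large enough to soak up the cross term at the cost of a residual proportional to $\|\nabla f(x_k)-g_k\|_2^2$. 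The second piece has an $\mathcal F_k$-measurable right factor, so its conditional expectation is $\langle\nabla f(x_k)-\mathbb E\Econd{g_k}{\mathcal F_k},x_k-x^\ast\rangle$; applying Young's inequality with weight $L\beta_k/C_2$ together with the bias bound \eqref{eq:sc_estimator_bias} produces exactly the multiplicative factor $1+\alpha L\beta_k/C_2$ on $\|x_k-x^\ast\|_2^2$ and the additive term $\tfrac14\alpha L\beta_k d\,C_2$.

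It then remains to take conditional expectation of the residual $\|\nabla f(x_k)-g_k\|_2^2$ term and control it by the second-moment estimate of Lemma~\ref{lm:sc_gradient_bound} (measured relative to $x_k$ rather than $x^\ast$, so that the $\tfrac{2}{n}$ cross term centered at the optimum is replaced by the $O(\beta_k^2)$ bias of \eqref{eq:sc_estimator_bias}); this supplies the $\tfrac{8\alpha^2 d}{nR}\|J_k-\nabla F(x^\ast)\|_F^2$ and $\tfrac{8\alpha^2 d}{nR}\|\nabla F(x_k)-\nabla F(x^\ast)\|_F^2$ contributions and the dimension-scaled term $\alpha^2 d^2 L^2\beta_k^2=(\alpha L\beta_k d)^2$ that completes the stated $\beta_k$ polynomial $\alpha L\beta_k d\big(\tfrac{C_2}{4}+\alpha L\beta_k d\big)$. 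The final bookkeeping is to observe that any surplus multiple of $\|\nabla F(x_k)-\nabla F(x^\ast)\|_F^2$ left over from the variance bound is dominated by the negative co-coercivity term $-\tfrac{\alpha}{nL}\|\nabla F(x_k)-\nabla F(x^\ast)\|_F^2$ carried from the first step, once more using $2\alpha L\le1$, so that only the clean coefficient $\tfrac{8\alpha^2 d}{nR}$ survives. I expect the main obstacle to be neither conceptual nor any single inequality, but the simultaneous calibration of constants---the $40d$ in the step size, the Young weight $L\beta_k/C_2$, and the $\alpha L\le\tfrac12$ margin---so that the $\|x_{k+1}-x_k\|_2^2$ absorption, the co-coercivity cancellation, and the $\beta_k$ terms all close to exactly the coefficients claimed in \eqref{eq:c_alt_distance}.
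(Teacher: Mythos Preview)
Your approach is correct and reaches exactly the bound \eqref{eq:c_alt_distance}, but it differs from the paper's route in the central step of handling the correlated cross term $2\alpha\langle\nabla f(x_k)-g_k,\,x_{k+1}-x^\ast\rangle$. The paper does \emph{not} split at $x_k$; instead it introduces the auxiliary deterministic proximal-gradient point $\overline x_{k+1}=\operatorname{prox}_{\alpha\psi}(x_k-\alpha\nabla f(x_k))$ and decomposes the inner product into $\langle\mathbb E\Econd{g_k}{\mathcal F_k}-\nabla f(x_k),\,x^\ast-\overline x_{k+1}\rangle$ plus $\langle g_k-\nabla f(x_k),\,\overline x_{k+1}-x_{k+1}\rangle$. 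The first piece is controlled via the bias bound \eqref{eq:sc_estimator_bias} together with a separate lemma showing that $T(x)=\operatorname{prox}_{\alpha\psi}(x-\alpha\nabla f(x))$ is non-expansive (so $\|\overline x_{k+1}-x^\ast\|_2\le\|x_k-x^\ast\|_2$); the second piece uses $\|x_{k+1}-\overline x_{k+1}\|_2\le\alpha\|g_k-\nabla f(x_k)\|_2$ and Cauchy--Schwarz to yield $\alpha\,\mathbb E\Econd*{\|g_k-\nabla f(x_k)\|_2^2}{\mathcal F_k}$, which is then bounded by Lemma~\ref{lm:nc_grad_var_bound} and Young's inequality exactly as you do. Because the paper has already traded the $-\tfrac{L}{2}\|x_{k+1}-x_k\|_2^2$ term for $-\tfrac{1}{2\alpha}\|x_{k+1}-x_k\|_2^2$ under $\alpha L\le 1$, the $\|x_{k+1}-x_k\|_2^2$ contribution cancels entirely and plays no role downstream.

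Your decomposition at $x_k$ is arguably more elementary: it avoids introducing $\overline x_{k+1}$ and sidesteps the auxiliary non-expansiveness lemma altogether, using instead the slack $(\alpha L-1)\|x_{k+1}-x_k\|_2^2$ and the margin $\alpha L\le\tfrac12$ to absorb the correlated piece. One remark: the co-coercivity step \eqref{eq:sc_bregman_bound} you invoke is unnecessary for this lemma as stated. The target inequality \eqref{eq:c_alt_distance} retains the term $\tfrac{8\alpha^2 d}{nR}\|\nabla F(x_k)-\nabla F(x^\ast)\|_F^2$ on the right, so there is no ``surplus multiple'' to cancel, and the negative $-\tfrac{\alpha}{nL}\|\nabla F(x_k)-\nabla F(x^\ast)\|_F^2$ can simply be dropped. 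Plain convexity of $f$ suffices, which is also what the paper uses.
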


Now we are ready to prove Theorem~\ref{tm:c_final}.

\begin{proof}[Proof of Theorem~\ref{tm:c_final}]
We begin from~\eqref{eq:sc_4convex} with $\mu = 0$:
  \begin{align*}
      & -2\alpha\mathbb{E}\Econd*{\langle g_{k}-\nabla f(x^\ast), x_{k}-x^\ast\rangle }{\mathcal{F}_k}
      \\ \leq{} & 
      - \frac{\alpha}{nL} \lVert \nabla F(x_{k}) - \nabla F(x^{\ast})\rVert_{F}^{2}
      + 2\alpha \langle \nabla f(x_{k})-\mathbb{E}\Econd*{g_{k}}{\mathcal{F}_k}, x_{k}-x^\ast\rangle
      \\ \leq{} & 
      - \frac{\alpha}{nL} \lVert \nabla F(x_{k}) - \nabla F(x^{\ast})\rVert_{F}^{2}
      + \frac{\alpha L \beta_{k} C_2 d }{4}
      + \frac{\alpha L \beta_{k}}{C_2}
      \lVert x_{k} - x^{\ast}\rVert_{2}^{2},
  \end{align*}
  where the last inequality comes from Young's inequality and~\eqref{eq:sc_estimator_bias}. Now we can plug the above inequality and~\eqref{eq:sc_gradient_bound} into~\eqref{eq:sc_nonexpan} to get
  \begin{align*}
      & \mathbb{E}\Econd*{\|x_{k+1}-x^*\|_{2}^{2}}{\mathcal{F}_{k}} \\
      \leq{} & \left(1+  \frac{\alpha L \beta_{k}}{C_2}\right) \|x_{k}-x^*\|_{2}^{2} 
      + \frac{\alpha}{n}\left( 2\alpha + \frac{8\alpha d}{R}- \frac{1}{L}\right)
      \lVert \nabla F(x_{k}) - \nabla F(x^{\ast})\rVert_{F}^{2}\\
      & + \frac{8 d\alpha^{2}}{n R}
      \| J_k - \nabla F(x^{\ast}) \|_F^2 
      + \alpha L \beta_{k} d \left( \frac{C_2}{4 } + \alpha L\beta_k d \right).
      \numberthis \label{eq:c_distance_contra}
  \end{align*}
  
  Now we define a new Lyapunov function:
  \begin{equation*}
    \Phi_{2}^{k} = 2\lVert x_{k} - x^{\ast}\rVert^{2}_{2} + c_{2} \lVert J_{k} - \nabla F(x^{\ast})\rVert^{2}_{F},
  \end{equation*}
  with $c_{2} = \frac{32\alpha^2d}{Rn\sigma}=\frac{8 d R}{L^2 n \sigma  (40 d+R)^2}$. We can now add~\eqref{eq:c_alt_distance}, \eqref{eq:c_distance_contra} and~\eqref{eq:sc_jacobian_recur} multiplied by $c_{2}$ to get
  \begin{align*}
    \mathbb{E}\Econd*{\Phi_{2}^{k+1}}{\mathcal{F}_{k}}
  \leq{} & 2\!\left(1+\frac{\alpha L \beta_{k}}{C_2}\right) \|x_{k}-x^*\|_{2}^{2} 
  + \left(\frac{2 \alpha^{2}}{n} + \frac{16 d \alpha^{2}}{n R} - \frac{\alpha}{nL} + 2 c_{2} \sigma\right)
  \lVert \nabla F(x_{k}) - \nabla F(x^{\ast})\rVert_{F}^{2}
  \\ 
  &
  + \left(\frac{16 d \alpha^{2}}{nR} + c_{2} \left(1 - \frac{\sigma}{2}\right)\right)
  \| J_{k} - \nabla F(x^{\ast}) \|_F^2 
  - 2 \alpha \mathbb{E}\Econd*{
    h(x^{k+1}) - h(x^{\ast})
  }{\mathcal{F}_{k}}
+ C_{\thebetacounter}(\beta_{k}),
  \end{align*}
  where
  \[
  C_{\thebetacounter}(\beta_{k}) 
  \defeq \frac{\alpha LdC_2}{2}\beta_k
  +\left(2+\frac{32\nu}{R\sigma^2}\right)\alpha^2 L^2 d^2 \beta_k^2.
  \]
  With our choices of the algorithm parameters, we have
  \begin{gather*}
  \frac{2 \alpha^{2}}{n} + \frac{16 d \alpha^{2}}{n R} - \frac{\alpha}{nL} + 2 c_{2} \sigma = 0,\qquad
      \frac{16 d \alpha^{2}}{nR} + c_{2} \left(1 - \frac{\sigma}{2}\right) = c_{2},
  \end{gather*}
  and therefore
  \begin{equation*}
    \mathbb{E}\Econd*{\Phi_{2}^{k+1}}{\mathcal{F}_{k}} \leq \left(1+ \frac{\alpha L \beta_{k}}{C_2}\right) \Phi_{2}^{k}
  - 2 \alpha \mathbb{E}\Econd*{
    h(x^{k+1}) - h(x^{\ast})
  }{\mathcal{F}_{k}}
  + C_{\thebetacounter}(\beta_{k}).
  \end{equation*}
  Taking total expectations on both sides and denoting $\gamma_{k} \defeq \Pi_{l=k}^{K-1} (1+ \alpha L \beta_{l}/C_2)$, we get
  \begin{equation*}
    \gamma_{k+1} \mathbb{E}[\Phi_{2}^{k+1}] 
    \leq \gamma_{k} \mathbb{E}[\Phi_{2}^{k}]
    - 2 \alpha \gamma_{k+1} \mathbb{E}[
    h(x^{k+1}) - h(x^{\ast})]
  + \gamma_{k+1} C_{\thebetacounter}(\beta_{k}).
  \end{equation*}
  Telescoping sum gives
   \begin{align*}
    \mathbb{E}[\Phi_{2}^{K}] 
    &\leq 
    \gamma_{0} \mathbb{E}[\Phi_{2}^{0}]
    +
    \sum_{k=0}^{K-1} \gamma_{k+1} \left(
    - 2 \alpha \mathbb{E}[
    h(x^{k+1}) - h(x^{\ast})]
  + C_{\thebetacounter}(\beta_{k})\right)
    \\&\leq 
    \gamma_{0} \mathbb{E}[\Phi_{2}^{0}]
    +
    \sum_{k=0}^{K-1} \left(
    - 2 \alpha \mathbb{E}[
    h(x^{k+1}) - h(x^{\ast})]
  + \gamma_{k+1} C_{\thebetacounter}(\beta_{k})\right).
  \end{align*}   
  We also know that for all $k, K \in \mathbb{N}$ such that $k < K$, we have
  \begin{equation*}
    \ln\gamma_{k} = \sum_{l=k}^{K-1} \ln\!\left(1+ \frac{\alpha L \beta_{l}}{C_2}\right) < \sum_{l=k}^{K-1} \frac{\alpha L \beta_{l}}{C_2} < \sum_{l=0}^{\infty} \frac{\alpha L\beta_{l}}{C_2} \leq \alpha L
    \leq\frac{1}{2}.
  \end{equation*}
  Therefore
  \begin{align*}
    \sum_{k=0}^{K-1} 
    \gamma_{k+1} C_{\thebetacounter}(\beta_{k})
    &\leq
    \sqrt{\mathrm{e}} 
    \sum_{k=0}^{\infty} C_{\thebetacounter}(\beta_{k})
    = \sqrt{\mathrm{e}} 
    \sum_{k=0}^{\infty} 
    \left[
  \frac{\alpha LdC_2}{2}\beta_k
  +\left(2+\frac{32\nu}{R\sigma^2}\right)\alpha^2 L^2 d^2 \beta_k^2
    \right]
    \\&\leq
    \frac{\sqrt{\mathrm{e}}}{2}
    \alpha L d C_2^2
    +
    \sqrt{\mathrm{e}}\!
    \left(2+\frac{32\nu}{R\sigma^2}\right)\alpha^2 L^2 d^2\beta_0C_2
  =\sqrt{\mathrm{e}}C_3.
  \end{align*}
  
  Finally, rearranging terms and plugging in our choice of $\alpha$ and $c_{2}$, we have
  \begin{align*}
    &\quad \frac{1}{K} \sum_{k=1}^{K}\mathbb{E}[h(x_{k})] - h(x^{\ast}) 
  \leq
    \frac{  \sqrt{\mathrm{e}}\Phi_{2}^{0}- \mathbb{E}[\Phi_{2}^{K}] }{2\alpha K} + \frac{\sqrt{\mathrm{e}} C_3}{2 \alpha K}
  \leq
    \frac{\sqrt{\mathrm{e}}L(1+40d/R)}{K}(\Phi_{2}^{0} + C_3)
  \\&\leq
  \frac{\sqrt{\mathrm{e}}L(1+40d/R)}{K}
    \left(2  \lVert x_{0} - x^{\ast}\rVert_{2}^{2}
      + \frac{8 d R}{L^2 n \sigma  (40 d+R)^2} \lVert J_0 - \nabla F(x^{\ast})\rVert_{F}^{2}
    + C_3\right),
    \end{align*}
which completes the proof.
\end{proof}

 \subsection{Analysis for the Non-convex Case}
\stepcounter{betacounter}
For the non-convex case, we need to give a different bound on the variance of $g_{k}$:
\begin{lemma}
\label{lm:nc_grad_var_bound}
For each $k$, we have
    \begin{equation}
      \label{eq:nc_grad_var_bound}
      \mathbb{E}\Econd*{\lVert g_{k} - \nabla f(x_{k})\rVert_{2}^{2}}{\mathcal{F}_{k}} 
      \leq \frac{2d}{n R} \| \nabla F(x_k) - J_k\|_F^2
   + \frac{d^2 L^2 \beta_k^2}{2}.
    \end{equation}
\end{lemma}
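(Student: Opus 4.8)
The plan is to read \eqref{eq:nc_grad_var_bound} as a bias--variance estimate for the conditionally (approximately) unbiased estimator $g_k$, exploiting that $g_k$ averages $R$ i.i.d.\ single-sample terms on top of the deterministic control term $\tfrac1n J_k\mathbf{1}$. Writing $v_i\defeq\nabla f_i(x_k)-J_k^{(i)}$ and, for a pair $(i,u)$, $z_{(i,u)}\defeq\widehat\nabla_u f_i(x_k;\beta_k)-uu^\top J_k^{(i)}$, the definition \eqref{eq:alg_gk_def} gives $g_k=\tfrac1n J_k\mathbf{1}+\tfrac dR\sum_{(i,u)\in\mathcal R_k}z_{(i,u)}$. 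Since $\mathbb{E}[\,d\,uu^\top J_k^{(i)}\mid\mathcal F_k]=\tfrac1n J_k\mathbf{1}$ and, by unbiasedness of the directional estimator up to the $O(\beta_k)$ smoothing error, $\mathbb{E}[\,d\,z_{(i,u)}\mid\mathcal F_k]=a$ with $a\defeq\nabla f(x_k)-\tfrac1n J_k\mathbf{1}$, I would recenter and write $g_k-\nabla f(x_k)=\tfrac dR\sum_{(i,u)}\big(z_{(i,u)}-\tfrac1d a\big)$, so that each summand is mean-zero up to the smoothing error.

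Because the elements of $\mathcal R_k$ are drawn i.i.d.\ from the prescribed distributions---and sampling the indices without replacement only lowers the variance, so the i.i.d.\ bound remains an upper bound---the $\tfrac1R$ averaging collapses the estimate to the second moment of a single centered summand, producing the $\tfrac1R$ factor. The key single-sample identity is $\mathbb{E}_u[uu^\top]=\tfrac1d I$, which holds identically whether $u$ is uniform on $\mathbb{S}^{d-1}$ or uniform on $\{e_1,\dots,e_d\}$. Decomposing $z_{(i,u)}=(u^\top v_i)u+\eta_{(i,u)}u$ with scalar remainder $\eta_{(i,u)}=\tfrac{f_i(x_k+\beta_k u)-f_i(x_k)}{\beta_k}-u^\top\nabla f_i(x_k)$, this identity yields $\mathbb{E}[(u^\top v_i)^2\mid\mathcal F_k]=\tfrac1d\|v_i\|_2^2$, and averaging over the uniformly chosen index $i$ gives $\tfrac1{nd}\|\nabla F(x_k)-J_k\|_F^2$. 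Combined with the $d^2$ prefactor, this produces the leading $\tfrac{d}{nR}\|\nabla F(x_k)-J_k\|_F^2$ scaling.

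The smoothing remainder is controlled exactly as in Lemma~\ref{lm:projected_bias}: $L$-smoothness (Assumption~\ref{as:smooth}) gives $|\eta_{(i,u)}|\le L\beta_k/2$. I would bound the cross term between $(u^\top v_i)u$ and $\eta_{(i,u)}u$ by Young's inequality, choosing the parameter so that the $\|\nabla F(x_k)-J_k\|_F^2$ coefficient doubles to the stated $\tfrac{2d}{nR}$, while the residual (approximate) bias of $g_k$ is supplied by Lemma~\ref{lm:estimator_bias}. All $O(\beta_k^2)$ contributions---the single-sample variance of $\eta_{(i,u)}$, the cross term, and the residual bias---are then collected into the single coefficient $\tfrac{d^2L^2\beta_k^2}{2}$.

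The main obstacle is the bookkeeping of these $\beta_k^2$ terms. A crude second-moment bound that drops the negative term $-\|\mathbb{E}[z_{(i,u)}\mid\mathcal F_k]\|_2^2$ from the exact variance overshoots the coefficient $\tfrac{d^2L^2\beta_k^2}{2}$ once $R$ is small (e.g.\ $R=1$), where the separately counted bias no longer fits. I therefore expect the proof must retain the centered (true) variance and use the resulting negative term, in conjunction with the $\tfrac1d$-averaging over directions that already sharpens the bias estimate in Lemma~\ref{lm:estimator_bias}, to collapse every smoothing contribution into the single clean $\tfrac{d^2L^2\beta_k^2}{2}$ coefficient while keeping the leading term at exactly $\tfrac{2d}{nR}$.
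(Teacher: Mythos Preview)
Your plan is correct in its core ingredients---the centered decomposition $g_k-\nabla f(x_k)=\tfrac{1}{R}\sum_r(d\,z_{(i_k^r,u_k^r)}-a)$, the single-sample identity $\mathbb{E}[(u^\top v_i)^2\mid\mathcal{F}_k]=\tfrac{1}{nd}\|\nabla F(x_k)-J_k\|_F^2$, and the observation that without-replacement indices only reduce variance---match the paper exactly. The difference lies in where you place Young's inequality, and your perceived obstacle is a by-product of that choice.

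You propose to split each scalar $z_{(i,u)}=(u^\top v_i)u+\eta_{(i,u)}u$ inside the variance computation and then reconcile the resulting $\beta_k^2$ pieces with the squared bias, worrying that the crude bound overshoots $\tfrac{d^2L^2\beta_k^2}{2}$ and that a negative variance term must be retained. The paper instead applies Young's inequality once, at the vector level, \emph{before} any expectation: writing $g_k-\nabla f(x_k)=A+B$ with
\[
B=\frac{d}{R}\sum_{(i,u)\in\mathcal R_k}\bigl(\widehat\nabla_u f_i(x_k;\beta_k)-uu^\top\nabla f_i(x_k)\bigr),
\qquad
A=\frac{d}{R}\sum_{(i,u)\in\mathcal R_k}uu^\top v_i-a,
\]
it bounds $2\|B\|_2^2\le\tfrac{d^2L^2\beta_k^2}{2}$ deterministically by the triangle inequality and Lemma~\ref{lm:direc_bias} (so $\|B\|_2\le\tfrac{d}{R}\cdot R\cdot\tfrac{L\beta_k}{2}$). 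The remaining term $A$ is then \emph{exactly} conditionally mean-zero, not approximately, so $2\,\mathbb{E}[\|A\|_2^2\mid\mathcal F_k]\le\tfrac{2}{R}\,\mathbb{E}[\|d\,uu^\top v_i\|_2^2\mid\mathcal F_k]=\tfrac{2d}{nR}\|\nabla F(x_k)-J_k\|_F^2$ directly, with no cross term, no squared-bias term, and no appeal to Lemma~\ref{lm:estimator_bias}. The ``bookkeeping obstacle'' you describe disappears once the smoothing error is peeled off first; your route can be made to work, but it is strictly more laborious.
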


In addition, we also need a bound on the mean square error of the Jacobian estimator:
\begin{lemma}
\label{lm:nc_jacobian_error}
For each $k$, we have
\begin{align}\label{eq:nc_jacobian_error}
    &\quad \mathbb{E}\Econd*{\lVert \nabla F(x_{k+1}) - J_{k+1}\rVert_{F}^{2}}{\mathcal{F}_{k}}
    \nonumber \\ &\leq
    \left(1- \frac{\sigma}{2}\right) \lVert \nabla F(x_{k}) - J_{k})\rVert_{F}^{2}
      + \frac{5n L^{2} }{\sigma} \mathbb{E}\Econd*{\lVert x_{k+1} - x_{k}\rVert_{2}^{2}}{\mathcal{F}_k} 
      +  \frac{5 \nu n d L^{2} \beta_{k}^{2} }{2 \sigma}.
  \end{align}
\end{lemma}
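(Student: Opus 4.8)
The plan is to track the error matrix $E_k \defeq \nabla F(x_k) - J_k$ and derive the one-step recursion by splitting the error at step $k+1$ into a part governed by the stochastic Jacobian update and a ``drift'' part caused by the motion of the iterate. First I would write
\[
\nabla F(x_{k+1}) - J_{k+1} = \bigl(\nabla F(x_{k+1}) - \nabla F(x_k)\bigr) + \bigl(\nabla F(x_k) - J_{k+1}\bigr),
\]
and then use the compact update~\eqref{eq:alg_J_update_final} together with adding and subtracting $\omega_k P_k(\nabla F(x_k))$ to rewrite the second bracket as
\[
\nabla F(x_k) - J_{k+1} = (I - \omega_k P_k)(E_k) - \omega_k\bigl(G_{\beta_k}(x_k;\mathcal{S}_k) - P_k(\nabla F(x_k))\bigr).
\]
This isolates three contributions: the contracted error $(I-\omega_k P_k)(E_k)$, the smoothing bias $\Delta_k \defeq G_{\beta_k}(x_k;\mathcal{S}_k) - P_k(\nabla F(x_k))$, and the gradient drift $\nabla F(x_{k+1}) - \nabla F(x_k)$.

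The heart of the argument is the exact identity $\mathbb{E}\bigl[\|(I-\omega_k P_k)(E_k)\|_F^2 \mid \mathcal{F}_k\bigr] = (1-\sigma)\|E_k\|_F^2$. I would prove it by expanding $\|E_k - \omega_k P_k(E_k)\|_F^2 = \|E_k\|_F^2 - 2\langle E_k, \omega_k P_k(E_k)\rangle + \omega_k\|P_k(E_k)\|_F^2$, using $\omega_k^2 = \omega_k$. The orthonormality of the directions in each $\mathcal{U}_{k,i}$ (Assumption~\ref{as:algorithm_rand_var}) makes the column-wise action of $P_k$ an orthogonal projection, which supplies the key identity $\langle E_k, P_k(E_k)\rangle = \|P_k(E_k)\|_F^2$; hence $\omega_k\|P_k(E_k)\|_F^2 = \langle E_k, \omega_k P_k(E_k)\rangle$. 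Taking conditional expectation and invoking~\eqref{eq:alg_omega_req1} with the $\mathcal{F}_k$-measurable matrix $E_k$ collapses both inner-product terms to $\sigma\|E_k\|_F^2$, yielding the claimed $(1-\sigma)$ factor.

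The remaining two contributions are then bounded by standard estimates: Lemma~\ref{lm:projected_bias} gives $\|\Delta_k\|_F^2 \leq |\mathcal{S}_k|L^2\beta_k^2/4$, so $\mathbb{E}[\omega_k\|\Delta_k\|_F^2\mid\mathcal{F}_k]\leq \nu L^2\beta_k^2/4$ via $\mathbb{E}[\omega_k|\mathcal{S}_k|] = \nu$; and the $L$-smoothness of each $f_i$ gives $\|\nabla F(x_{k+1})-\nabla F(x_k)\|_F^2\leq nL^2\|x_{k+1}-x_k\|_2^2$. I would glue the three pieces together with two applications of Young's inequality $\|a+b\|^2\leq(1+\eta)\|a\|^2+(1+\eta^{-1})\|b\|^2$: one with parameter of order $\sigma$ to peel off the drift (producing the $1/\sigma$ factor in the coefficient $5nL^2/\sigma$), and one with a much smaller parameter of order $\sigma/(nd)$ to peel off $\Delta_k$ (producing the $nd/\sigma$ amplification in $\tfrac{5\nu nd L^2\beta_k^2}{2\sigma}$), while keeping the overall contraction factor $(1+\eta)(1-\sigma)\leq 1-\sigma/2$; I would finally check these parameter choices make the three output constants dominate $1-\sigma/2$, $5nL^2/\sigma$, and $\tfrac{5\nu nd L^2\beta_k^2}{2\sigma}$ for every $\sigma\in(0,1)$.

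The step I expect to require the most care is the exact contraction computation, since it is where both structural assumptions do the work: the orthonormality supplies the projection identity and kills the cross-terms between distinct directions, and the moment condition $\mathbb{E}[\omega_k P_k(\cdot)]=\sigma(\cdot)$ converts the raw second moment into a clean $(1-\sigma)$ factor. A secondary subtlety is that the drift depends on $\mathcal{R}_k$ (through $g_k$ and hence $x_{k+1}$) whereas the contraction term depends on $(\mathcal{S}_k,\omega_k)$; I would sidestep any independence assumption by separating them with Young's inequality \emph{before} taking conditional expectations, so that the drift survives as the conditional quantity $\mathbb{E}[\|x_{k+1}-x_k\|_2^2\mid\mathcal{F}_k]$ appearing in the statement, and only the $(\mathcal{S}_k,\omega_k)$-measurable pieces enter the contraction identity.
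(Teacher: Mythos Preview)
Your proposal is correct and follows essentially the same route as the paper: the same drift/update-error decomposition, the same contraction identity $\mathbb{E}[\|(I-\omega_k P_k)(E_k)\|_F^2\mid\mathcal{F}_k]=(1-\sigma)\|E_k\|_F^2$ obtained from the projection identity $\langle P_k(A),A\rangle=\|P_k(A)\|_F^2$ (which the paper records separately as Lemma~\ref{lm:op_twice}), the same bias bound via Lemma~\ref{lm:projected_bias}, the same drift bound via $L$-smoothness, and two Young splits to recombine. The only cosmetic difference is in the Young parameters: the paper takes $\eta=\sigma/4$ in \emph{both} splits, checks $(1+\sigma/4)^2(1-\sigma)\le 1-\sigma/2$ and $(1+\sigma/4)(1+4/\sigma)\le 10/\sigma$, and thereby already obtains the bias contribution $\tfrac{5\nu L^2\beta_k^2}{2\sigma}$, which it then weakens by the crude factor $nd\ge 1$ to reach the stated $\tfrac{5\nu nd L^2\beta_k^2}{2\sigma}$; you do not actually need the smaller parameter $\sigma/(nd)$ you propose for the second split.
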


The following lemma characterizes the evolution of the expected cost function value:
\begin{lemma}
  \label{lm:nc_frac_g}
  With our definition of $\mathfrak{g}_{\alpha}(x)$, at the $k$-th iteration, we have
  \begin{align}\label{eq:nc_frac_g}
    \mathbb{E}\Econd*{h(x_{k+1})}{\mathcal{F}_{k}}
    \leq{} & h(x_{k}) 
    + \left(\frac{L}{2} - \frac{1}{2 \alpha}\right)
    \mathbb{E}\Econd*{\lVert x_{k+1} - x_{k}\rVert_{2}^{2}}{\mathcal{F}_{k}}
    + \left(\alpha^{2}L - \frac{\alpha}{2}\right) \lVert \mathfrak{g}_{\alpha}(x_{k})\rVert_{2}^{2}.
  \nonumber \\&
  + \frac{\alpha}{2} \mathbb{E}\Econd*{\lVert g_{k} - \nabla f(x_{k})\rVert^{2}_{2}}{\mathcal{F}_{k}}.
  \end{align}
\end{lemma}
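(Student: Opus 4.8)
The plan is to prove the inequality pathwise for each realization of $(\omega_k,\mathcal{R}_k,\mathcal{S}_k)$ and only at the very end take the conditional expectation $\mathbb{E}[\,\cdot\mid\mathcal{F}_k]$, using that $h(x_k)$ and $\mathfrak{g}_{\alpha}(x_k)$ are $\mathcal{F}_k$-measurable while the genuine randomness sits in $\|x_{k+1}-x_k\|_2^2$ and $\|g_k-\nabla f(x_k)\|_2^2$. First I would invoke the $L$-smoothness of $f$ (Assumption~\ref{as:smooth}) to write $f(x_{k+1}) \le f(x_k) + \langle \nabla f(x_k), x_{k+1}-x_k\rangle + \tfrac{L}{2}\|x_{k+1}-x_k\|_2^2$. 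Then, since $x_{k+1}=\operatorname{prox}_{\alpha\psi}(x_k-\alpha g_k)$ from~\eqref{eq:alg_prox_grad} is the minimizer of a $\tfrac{1}{\alpha}$-strongly convex proximal subproblem, I would apply its subgradient/optimality inequality with test point $x_k$ to obtain $\psi(x_{k+1}) \le \psi(x_k) - \langle g_k, x_{k+1}-x_k\rangle - \tfrac{1}{\alpha}\|x_{k+1}-x_k\|_2^2$. Summing these yields the basic inexact proximal-gradient inequality $h(x_{k+1}) \le h(x_k) + \langle \nabla f(x_k)-g_k,\,x_{k+1}-x_k\rangle + (\tfrac{L}{2}-\tfrac{1}{\alpha})\|x_{k+1}-x_k\|_2^2$, which already produces a step-length term, but with coefficient $\tfrac{L}{2}-\tfrac{1}{\alpha}$ rather than the claimed $\tfrac{L}{2}-\tfrac{1}{2\alpha}$ and without any appearance of $\mathfrak{g}_{\alpha}(x_k)$.

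The crucial second step is to convert part of the step-length descent into descent measured by the \emph{true} gradient mapping. I would introduce the stochastic mapping $\tilde G_k \defeq \tfrac{1}{\alpha}(x_k-x_{k+1})$, so that $x_{k+1}-x_k=-\alpha\tilde G_k$, and exploit non-expansiveness of the proximal operator: since $\tilde G_k$ and $\mathfrak{g}_{\alpha}(x_k)$ are prox-gradient steps from $x_k$ differing only in the gradient used, we have $\|\tilde G_k-\mathfrak{g}_{\alpha}(x_k)\|_2 \le \|g_k-\nabla f(x_k)\|_2$. Writing $\tilde G_k = \mathfrak{g}_{\alpha}(x_k)+\delta_k$ with $\|\delta_k\|_2\le\|g_k-\nabla f(x_k)\|_2$, I would expand both $\|x_{k+1}-x_k\|_2^2=\alpha^2\|\tilde G_k\|_2^2$ and the cross term $\langle \nabla f(x_k)-g_k,\,x_{k+1}-x_k\rangle=\alpha\langle g_k-\nabla f(x_k),\,\tilde G_k\rangle$ in terms of $\mathfrak{g}_{\alpha}(x_k)$, $\delta_k$, and the error $g_k-\nabla f(x_k)$, then bound every resulting inner product by Young's inequality together with the norm bound on $\delta_k$. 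Splitting the coefficient as $\tfrac{L}{2}-\tfrac{1}{\alpha}=\bigl(\tfrac{L}{2}-\tfrac{1}{2\alpha}\bigr)-\tfrac{1}{2\alpha}$ and spending the spare $-\tfrac{1}{2\alpha}\|x_{k+1}-x_k\|_2^2$ to absorb the cross terms is exactly what trades step-length descent for the negative $\|\mathfrak{g}_{\alpha}(x_k)\|_2^2$ contribution.

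The main obstacle I anticipate is the constant bookkeeping in this second step: landing \emph{exactly} on the three target coefficients $\tfrac{L}{2}-\tfrac{1}{2\alpha}$, $\alpha^2 L-\tfrac{\alpha}{2}$, and $\tfrac{\alpha}{2}$ requires choosing the Young weights carefully and recognizing that the leftover slack is a non-negative multiple of $\|\mathfrak{g}_{\alpha}(x_k)\|_2^2$; one can sanity-check in the smooth case $\psi\equiv 0$ (where $\delta_k=g_k-\nabla f(x_k)$ exactly) that the claimed inequality holds with a surplus of precisely $\alpha^2 L\,\|\mathfrak{g}_{\alpha}(x_k)\|_2^2\ge 0$, which is where the constant $\alpha^2 L$ enters. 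A clean way to organize the general argument is to expand everything through $\tilde G_k=\mathfrak{g}_{\alpha}(x_k)+\delta_k$, collect the genuinely quadratic terms in $\mathfrak{g}_{\alpha}(x_k)$ and $\|x_{k+1}-x_k\|_2^2$, and simply discard the residual non-negative term rather than track it. Once the pathwise bound is in the stated form, taking $\mathbb{E}[\,\cdot\mid\mathcal{F}_k]$ and using $\mathcal{F}_k$-measurability of $\mathfrak{g}_{\alpha}(x_k)$ finishes the proof; notably the argument never uses unbiasedness of $g_k$, only its second moment, so it dovetails with the bias bound of Lemma~\ref{lm:estimator_bias} and the variance bound supplied by Lemma~\ref{lm:nc_grad_var_bound}.
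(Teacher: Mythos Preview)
Your plan has a genuine gap: the ``basic inexact proximal-gradient inequality'' you derive in step~3,
\[
h(x_{k+1}) \le h(x_k) + \langle \nabla f(x_k)-g_k,\,x_{k+1}-x_k\rangle + \Bigl(\tfrac{L}{2}-\tfrac{1}{\alpha}\Bigr)\|x_{k+1}-x_k\|_2^2,
\]
is \emph{strictly weaker} than the target in some instances, so no amount of algebraic bookkeeping (Young weights, the bound $\|\delta_k\|_2\le\|e_k\|_2$, etc.) applied to it can produce~\eqref{eq:nc_frac_g}. Concretely, take $d=1$, $f\equiv 0$ (which is $L$-smooth for any $L>0$), $\psi(x)=|x|$, $x_k=2\alpha$, $g_k=2$. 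Then $x_{k+1}=\operatorname{prox}_{\alpha|\cdot|}(0)=0$, $\bar x_{k+1}=\operatorname{prox}_{\alpha|\cdot|}(2\alpha)=\alpha$, $\mathfrak{g}_\alpha(x_k)=1$, $e_k=2$, $\|x_{k+1}-x_k\|^2=4\alpha^2$. Your step-3 right-hand side evaluates to $2\alpha+2L\alpha^2$, while the lemma's right-hand side is $\tfrac{3\alpha}{2}+3L\alpha^2$; for $\alpha L<\tfrac12$ the former is larger. Since you are trying to upper-bound an expression by the lemma's right-hand side starting from a \emph{weaker} upper bound, the implication cannot go through. The loss happens precisely because you compared the prox subproblem only at the test point $x_k$.

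What the paper does differently is to invoke the three-point prox inequality (Lemma~2 of~\cite{j2016proximal}) twice: once for the stochastic step $x_{k+1}$ with test point $y=\bar x_{k+1}$, and once for the deterministic step $\bar x_{k+1}$ with test point $y=x_k$. Summing the two produces an extra term $-\tfrac{1}{2\alpha}\|x_{k+1}-\bar x_{k+1}\|_2^2$ alongside the cross term $\langle x_{k+1}-\bar x_{k+1},\nabla f(x_k)-g_k\rangle$; a single Young inequality then yields $\tfrac{\alpha}{2}\|g_k-\nabla f(x_k)\|_2^2$ with no residual mixed terms, and the coefficient on $\|\bar x_{k+1}-x_k\|_2^2=\alpha^2\|\mathfrak{g}_\alpha(x_k)\|_2^2$ comes out exactly as $L-\tfrac{1}{2\alpha}$. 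The missing ingredient in your plan is this second application of prox optimality \emph{at the comparison point $\bar x_{k+1}$}; using it only at $x_k$ throws away the slack that the lemma needs.
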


The proofs of these lemmas are postponed to Appendix~\ref{app:proof_nonconvex}
Now we are ready to prove Theorem~\ref{tm:nc_main}.
\begin{proof}[Proof of Theorem~\ref{tm:nc_main}]
  We define the Lyanpunov function $\Phi_{3}^{k}$ as
  \begin{equation*}
      \Phi_{3}^{k} := 
    h(x_{k}) + c_{3} \lVert \nabla F(x_{k}) - J_{k}\rVert_{F}^{2},
  \end{equation*}
  where $c_{3} = \frac{2 \alpha  d}{n R \sigma }$.
  Combining \eqref{eq:nc_jacobian_error} and \eqref{eq:nc_frac_g} gives
  \begin{align*}
  \mathbb{E}\Econd*{
    \Phi_{3}^{k+1}
  }{\mathcal{F}_{k}}
={} &
  \mathbb{E}\Econd*{
    h(x_{k+1}) + c_{3} \lVert \nabla F(x_{k+1} - J_{k+1})\rVert_{F}^{2}
  }{\mathcal{F}_{k}}\\
  \leq{} & h(x_{k})
  + \frac{\alpha}{2} \mathbb{E}\Econd*{\lVert
    g_{k} - \nabla f(x_{k})
  \rVert^{2}_{2}}{\mathcal{F}_{k}}
  + \left(\frac{L}{2} - \frac{1}{2 \alpha} + \frac{5c_{3}nL^{2}}{\sigma}\right)
  \mathbb{E}\Econd*{\lVert x_{k+1} - x_{k}\rVert_{2}^{2}}{\mathcal{F}_{k}}\\
  & + c_{3}\left(1- \frac{ \sigma}{2}\right) \lVert 
  \nabla F(x_{k}) - J_{k}
  \rVert_{F}^{2}
  + \left(\alpha^{2}L - \frac{\alpha}{2}\right) \lVert \mathfrak{g}_{\alpha}(x_{k}) \rVert_{2}^{2}+ \frac{5 c_{3} \nu n d L^{2} \beta_{k}^{2} }{2 \sigma}\\
  \leq{} & h(x_{k})
  + \left(\frac{L}{2} - \frac{1}{2 \alpha} + \frac{5c_{3}nL^{2}}{\sigma}\right)
  \mathbb{E}\Econd*{\lVert x_{k+1} - x_{k}\rVert_{2}^{2}}{\mathcal{F}_{k}} \\
  & + \left(c_{3}\left(1- \frac{ \sigma}{2}\right) + \frac{d \alpha}{n R}\right) \lVert 
  \nabla F(x_{k}) - J_{k}
  \rVert_{F}^{2}
  + \left(\alpha^{2}L - \frac{\alpha}{2}\right) \lVert \mathfrak{g}_{\alpha}(x_{k}) \rVert_{2}^{2}
  + C_{\ref{counter:nc_final}}(\beta_{k}),
  \end{align*}
  where in the last inequality we used \eqref{eq:nc_grad_var_bound}.
  With our choice of $c_{3}$ and $\alpha$ and the condition $R\leq d/\sigma^2$, we have
  \begin{equation*}
  c_{3}\left(1- \frac{ \sigma}{2}\right) + \frac{d \alpha}{n R} = c_{3},\quad \frac{L}{2} - \frac{1}{2 \alpha} + \frac{5c_{3}nL^{2}}{\sigma} \leq 0.
  \end{equation*}
  So we can drop the non-positive term and get
  \begin{equation*}
    \mathbb{E} \Econd*{\Phi_{3}^{k+1}}{\mathcal{F}_{k}}
    \leq
    \Phi_{3}^{k} - \left(\frac{\alpha}{2 } - \alpha^{2}L\right)\lVert \mathfrak{g}_{\alpha}(x_{k}) \rVert_{2}^{2} 
    + C_{\ref{counter:nc_final}}(\beta_{k}).
  \end{equation*}
  Taking full expectation and telescoping sum yields
  \begin{align*}
    \sum_{k=0}^{K} 
    \mathbb{E}\!\left[\lVert \mathfrak{g}_{\alpha}(x_{k}) \rVert_{2}^{2}\right]
    &\leq
    \frac{2}{\alpha - 2 \alpha^{2} L}
    \left(h(x_{0}) - h(x_{k}) + c_{3} \lVert \nabla F(x_{0}) - J_0\rVert_{2}^{2} 
    + \sum_{k=0}^{K} C_{\ref{counter:nc_final}}(\beta_{k})\right) \\
    &=
    \frac{2}{\alpha - 2 \alpha^{2} L}
    \left(h(x_{0}) - h(x^\ast) + 
    \frac{2\sqrt{d}}{5Ln\sqrt{R}}\lVert \nabla F(x_{0}) - J_0\rVert_{2}^{2} 
    + \sum_{k=0}^{K} C_{\ref{counter:nc_final}}(\beta_{k})\right).
  \end{align*}
  We also notice that by our choice of $\alpha$ and requirement on $R$, we have $\frac{2}{\alpha - 2 \alpha^{2} L} \leq \frac{4}{\alpha} = \frac{20 \sqrt{d} L}{\sqrt{R} \sigma }$. We now obtain the desired bound.
\end{proof}
 \section{Numerical Experiments}
  In this section, we test our 2-point \AlgAbbrv{} algorithm against other zeroth-order algorithms, including ZPDVR~\cite{di2024double}, stochastic proximal zeroth-order gradient descent algorithm without variance reduction (here referred to as Vanilla ZO)~\cite{duchi2015optimal}, a full-batch zeroth-order gradient descent algorithm and ZPSVRG, which is the Prox-SVRG method~\cite{xiao2014proximal} adapted to zeroth-order settings. The source code for these experiments is available at https://github.com/zhangsilan/ZIVR.
  \subsection{Logistic Regression}\label{section:exp_logistic}
  We first consider the following logistic regression problem with elastic net regulariztion~\cite{zou2005elastic}:
  \begin{equation}\label{eq:exp_logistic}
    \min_{x \in \mathbb{R}^{d}}\frac{1}{n}\sum_{i=1}^{n}\ln(1+\exp(-b_i a_i ^T x)) + \frac{\mu}{2}\|x\|_2^2 + \lambda \|x\|_1,
  \end{equation}
  whith $a_{i}$ being individual feature vectors $b_{i}$ the corresponding label. This problem can be seen as a special example of~\eqref{eq:intro_formulation} with $f_{i}(x) = \ln(1+\exp(-b_i a_i ^T x)) + \frac{\mu}{2}\|x\|_2^2 $, and $\psi(x) = \lambda \lVert x\rVert_{1}$. We conduct our experiment on two data sets from the LIBSVM project~\cite{Chang2011libsvm}, the parameters of these two problems are listed in Table~\ref{tb:dataset}.
  \begin{table}
    \centering
  \caption{Data sets and regularization coefficients used in our experiments.}    \label{tb:dataset}
  \begin{tabular}{c c c c c}
    \toprule
    Data Set & $n$ &  $d$ & $\mu$ & $\lambda$\\
    \midrule
    a9a & $32561$ & $123$ & $10^{-4}$ & $10^{-4}$\\
    w8a & $49749$ & $300$ & $10^{-4}$ & $10^{-4}$\\
    \bottomrule
  \end{tabular}    
  \end{table}
\begin{figure}
  \centering
  \subfloat[\centering a9a]{\includegraphics[width=0.49\linewidth]{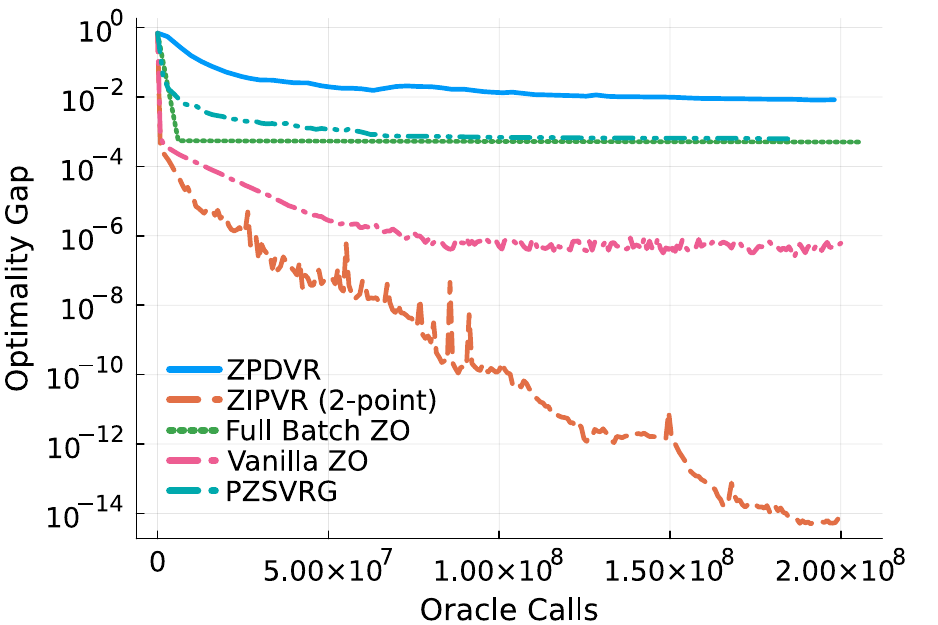}}
  \hfill
  \subfloat[\centering w8a]{\includegraphics[width=0.49\linewidth]{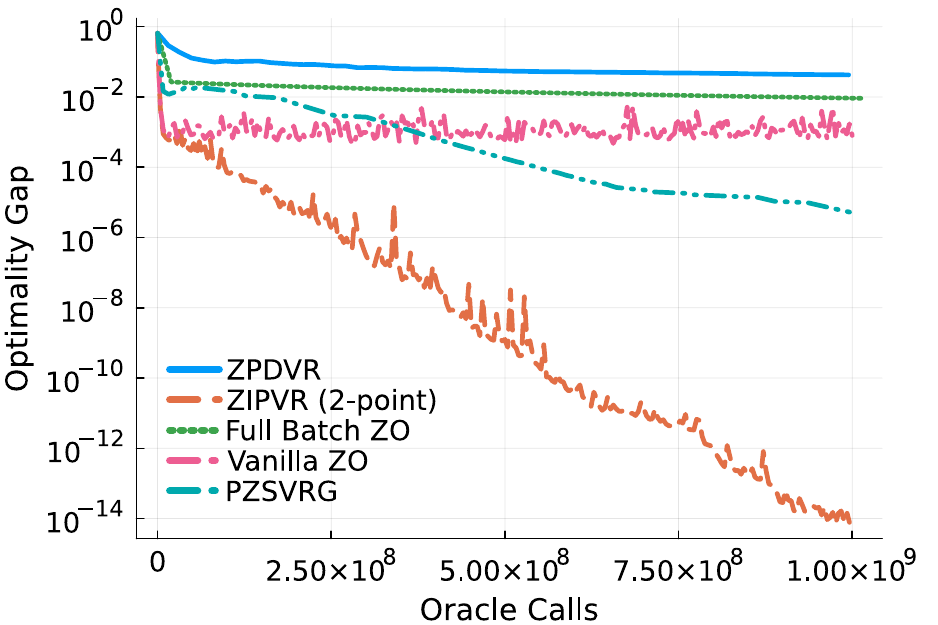}}
  \caption{Comparison of different zeroth-order methods on logistic regression problems, the $x$-axis is the number of zeroth-order oracle calls and the $y$-axis is the optimality gap $h(x^{k})- h(x^{\ast})$.}
    \label{fig:exp1}
\end{figure}

The results of our experiments are shown in Figure~\ref{fig:exp1}, we can see that our 2-point \AlgAbbrv{} algorithm outperforms other algorithms by a large margin.  The vanilla zeroth-order algorithm without variance reduction performs well at first, its curve soon flattens, this further suggests that non-vanishing variance may hinder the algorithm from converging to the optimal point. The ZPDVR algorithm, though also designed to deal with the two-fold variance in this problem and exhibits a linear convergence pattern, also converges significantly slower than \AlgAbbrv{}. As for the full-batch zeroth-order algorithm, the curve is generally very flat except for the first iteration, since the $x$-axis signifies the oracle calls instead of iteration number, and for the full-batch algorithm these seemingly large budgets of oracle calls only allow for relatively few ($\sim 10^1$) iterations.

\subsection{Cox Regression}

We further test the same algorithms on the elastic-net regularized cox regression problem~\cite{10.1093/bioinformatics/btp322}. This formulation is widely used in survival analysis to model the relationship between a subject's genetic features and the time to an event (e.g., death or failure), while the non-smooth regularization promotes sparsity, allowing for the selection of relevant genes from high-dimensional data.

More specifically, given $ \{a_i \in \mathbb{R}^d, \delta_i \in \{0,1\}, t_i \in \mathbb{R}_+ \}_{i=1}^n $, we aim to model the association between an individual’s gene expression profile $a_i$their time-to-event outcome $t_{i}$. Here the censoring indicator $\sigma_{i}$ takes value $1$ if an event is observed and $0$ otherwise. Using the kidney renal clear cell carcinoma dataset\footnote{Available at http://gdac.broadinstitute.org/}, we can formulate this problem as
\[
  \min_{x\in \mathbb{R}^{d}}\, \frac{1}{n} \sum_{i=1}^n \delta_i \left\{ -a_i^T x + \log \left( \sum_{j \in \mathcal{R}_i} e^{a_j^T x} \right) \right\} + \frac{\mu}{2} \lVert x\rVert_{2}^{2} + \lambda \| x \|_1 .
\]
where \( x \in \mathbb{R}^m \) is the vector of covariates coefficients to be designed, $\mathcal{R}_i = \{ j : t_j \geq t_i \}$ is the set of subjects at risk at time \( t_i \). For this dataset, we have $n=112$ and $d=160$. As in previous experiments we also set $\lambda = \mu=10^{-4}$.

\begin{figure}
  \centering
  \includegraphics[width=.6\linewidth]{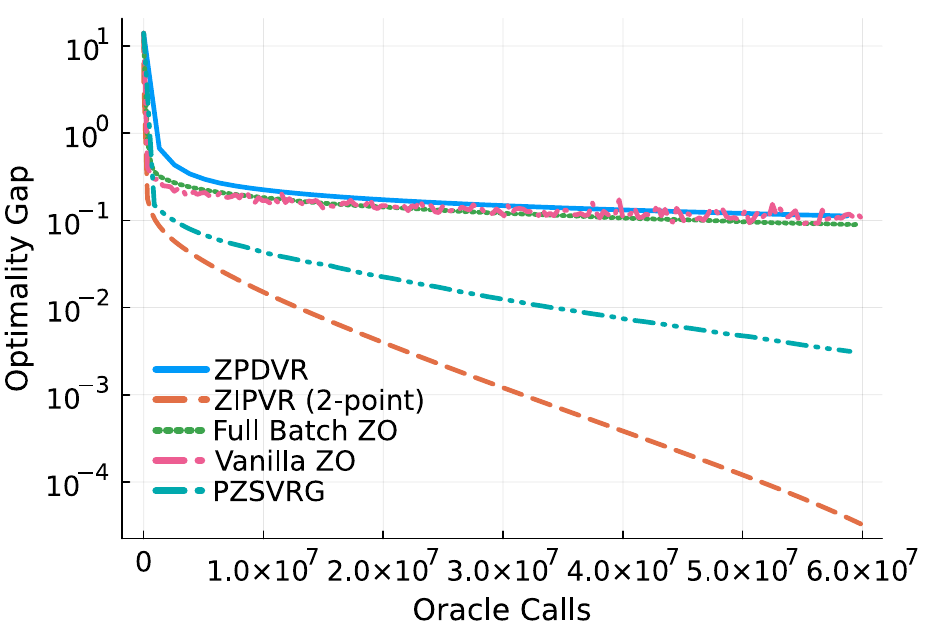}
  \caption{Comparison of different zeroth-order methods on the cox regression problem.}
    \label{fig:exp2}

\end{figure}

The result of our experiment is shown in Figure~\ref{fig:exp2}. Compared with the logistic-regression experiments, though the number of component function $n$ and problem dimension are smaller for the Cox regression problem, it exhibits a more challenging optimization landscape due to the nested log-sum-of-exponentials structure of the cost functions. Nevertheless, the comparative performance of the algorithms remains similar to the logistic-regression case. We can see that our proposed algorithm converges faster than other existing algorithms; among the existing methods, PZSVRG performs the best.

 \section{Conclusion}
In this work, we introduced \AlgAbbrv, a general variance reduction framework for zeroth-order composite optimization. By maintaining an incremental Jacobian estimator, \AlgAbbrv{} is able to overcome the issue of high maximum batch size of existing methods, enabling a practical pure 2-point implementation that eliminates the need for intermittent large-batch sampling. We established comprehensive convergence guarantees, showing that \AlgAbbrv{} matches the rates of first-order counterparts across strongly-convex, convex, and non-convex settings.

Given the flexibility of the \AlgAbbrv{} framework, several extensions and applications warrant further exploration.
First, it would be interesting to investigate accelerated variants of \AlgAbbrv{} for ill-conditioned strongly convex problems, aiming to improve the dependency of the oracle complexity on the condition number from $O\!\left(L/\mu\right)$ to $O\!\left(\sqrt{L/\mu}\right)$.
In addition, we are also interested in extending \AlgAbbrv{} to incorporate biased estimators to further enhance its performance in the non-convex setting, as algorithms using biased estimators (e.g., SPIDER~\cite{fang2018spider}, SARAH~\cite{nguyen2017sarah}) typically achieve superior rates for non-convex problems.
 \bibliographystyle{IEEEtran}
\bibliography{main_bib}
\appendix
\section{Derivation of \texorpdfstring{$\sigma$}{sigma} and Expected Batch Size $\nu$}

In this section, we derive the update probability parameter $\sigma$ and the expected batch size $\nu = \mathbb{E}[\omega_k |\mathcal{S}_k|]$ for the implementations presented in Section~\ref{section:algorithm}.

\begin{proof}
First, we note that, for the approach of constructing $\mathcal{S}_k$ proposed in Remark~\ref{rm:alg_sampling}, we have
\[
\begin{aligned}
\mathbb{E}\!\left[\omega_k P_k(A)\right]
={} & \mathbb{E}\Econd*{\sum_{(i,u)\in\mathcal{S}_k}uu^T A e_ie_i^T
}{\omega_k=1}\cdot\mathbb{P}(\omega_k=1) \\
={} & \mathbb{E}\Econd*{\sum_{i=1}^n\sum_{j=1}^d
\mathbb{1}_{(i,Q_k^{(j)})\in\mathcal{S}_k}
\cdot Q_k^{(j)}\big(Q_k^{(j)}\big)^T A e_ie_i^T
}{\omega_k=1}\cdot\mathbb{P}(\omega_k=1) \\
={} &
\sum_{i=1}^n\sum_{j=1}^d
\mathbb{P}\Vcond*{\big(i,Q_k^{(j)}\big)\in\mathcal{S}_k}{\omega_k=1}
\cdot Q_k^{(j)}\big(Q_k^{(j)}\big)^T A e_ie_i^T
\cdot\mathbb{P}(\omega_k=1).
\end{aligned}
\]
Then, it is not hard to check that for all the implementations presented in Section~\ref{section:algorithm}, the probability $\mathbb{P}\Vcond*{\big(i,Q_k^{(j)}\big)\in\mathcal{S}_k}{\omega_k=1}$ does not depend on $i,j$ or $k$, which we denote temporarily by $q$. As a result, we have
\[
\begin{aligned}
\mathbb{E}\!\left[\omega_k P_k(A)\right]
={} & q\cdot\mathbb{P}(\omega_k=1)\cdot \sum_{i=1}^n\sum_{j=1}^d
Q_k^{(j)}\big(Q_k^{(j)}\big)^T A e_ie_i^T
\cdot\mathbb{P}(\omega_k=1) \\
={} & q\cdot\mathbb{P}(\omega_k=1)\cdot A,
\end{aligned}
\]
showing that $\sigma=q\cdot\mathbb{P}(\omega_k=1)$. Furthermore, 
\[
\begin{aligned}
\nu= {} & \mathbb{E}[\omega_k|\mathcal{S}_k|]
=\mathbb{P}(\omega_k=1)\cdot\mathbb{E}\Econd*{|\mathcal{S}_k|}{\omega_k=1} \\
={} & \mathbb{P}(\omega_k=1)\cdot
\mathbb{E}\Econd*{\sum_{i=1}^n\sum_{j=1}^d
\mathbb{P}\Vcond*{\big(i,Q_k^{(j)}\big)\in\mathcal{S}_k}{\omega_k=1}}{\omega_k=1} \\
={} & \mathbb{P}(\omega_k=1)\cdot ndq
=nd\sigma,
\end{aligned}
\]
showing that $\nu$ is just $\sigma$ multiplied by $nd$.
We now analyze $\sigma$ of the four implementations separately.

\begin{enumerate}
    \item \textbf{Implementation I:} We have $\mathbb{P}(\omega_k=1) = 1$, and $\mathcal{S}_k$ consists of $R$ distinct pairs sampled uniformly from the $nd$ available options. Thus
    \[
    \sigma = q=\frac{\binom{nd-1}{R-1}}{\binom{nd}{R}} = \frac{R}{nd}.
    \]

    \item \textbf{Implementation II:} Let $p = \min\{R/d, 1\}$. The update occurs with probability $p$. If it occurs, we select $m = \lceil \frac{R}{pd} \rceil$ component indices uniformly from $[n]$ and update all $d$ directions for those components. The probability of a specific component $i$ being selected given $\omega_k=1$ is $m/n$, and thus
    \[
    \sigma = p \cdot q = p\cdot \frac{m}{n}
    =\min\{R/d,1\}\cdot\frac{\lceil R/(pd) \rceil}{n}=\begin{cases}
    R/(nd), & \text{if }R<d, \\
    n^{-1}\lceil R/d\rceil, & \text{if }R\geq d.
    \end{cases}.
    \]
    Note that if $R$ is not a multiple of $d$, we have $\sigma > R/nd$.

    \item \textbf{Implementation III:} We have $\mathbb{P}(\omega_k = 1) = R/nd$. When $\omega_k = 1$, the entire Jacobian is updated ($\mathcal{S}_k$ covers all $nd$ entries). Thus
    \[
    \sigma = \frac{R}{nd} \cdot 1 = \frac{R}{nd}.
    \]

    \item \textbf{Memory-Efficient Implementation:} We have $\mathbb{P}(\omega_k=1) = p = BR/nd$. If an update occurs, one block $\mathcal{T}_l$ is chosen uniformly from $B$ disjoint blocks partitioning $[n] \times [d]$. The probability a specific entry is in the chosen block is $1/B$. Thus
    \[
    \sigma = \frac{BR}{nd}\cdot q= \frac{BR}{nd}\cdot \frac{1}{B} = \frac{R}{nd}.
    \qedhere
    \]
\end{enumerate}
\end{proof}

\section{Proofs of Lemmas \ref{lm:projected_bias} and \ref{lm:estimator_bias}}
\label{appendix:auxiliary}

We will first introduce a lemma that bounds the error of the zeroth-order estimator $\widehat \nabla_{u} f_{i}(x ; \beta)$ in approximating the directional derivative $u u^{T} \nabla f_{i}(x)$
\begin{lemma}\label{lm:direc_bias}
  Suppose $u\in \mathbb{R}^{d}, \lVert u\rVert = 1$. Then for any $x\in \mathbb{R}^d$, $\beta > 0$ and $i \in [n]$ we have
	\begin{equation}\label{eq:two_point_bias}
		\left\|\widehat{\nabla}_u f_{i}(x ; \beta) - u u^{T} \nabla f_{i}(x)\right\|_{2} \leq \frac{  L\beta}{2}.
	\end{equation}
\end{lemma}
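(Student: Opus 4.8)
The plan is to reduce the vector-valued quantity to a scalar and then invoke the standard quadratic upper bound implied by $L$-smoothness. First I would substitute the definition~\eqref{eq:fm_2p} of the estimator and factor out the common direction $u$:
\[
\widehat{\nabla}_u f_i(x;\beta) - uu^T \nabla f_i(x)
= u\left(\frac{f_i(x+\beta u) - f_i(x)}{\beta} - u^T \nabla f_i(x)\right).
\]
Since $\|u\|_2 = 1$, the Euclidean norm of the left-hand side equals the absolute value of the scalar coefficient, so the whole problem collapses to bounding
\[
\left|\frac{f_i(x+\beta u) - f_i(x)}{\beta} - \langle \nabla f_i(x), u\rangle\right|.
\]

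Next I would apply the descent lemma, i.e.\ the quadratic bound $\left|f_i(y) - f_i(x) - \langle \nabla f_i(x), y - x\rangle\right| \leq \frac{L}{2}\|y - x\|_2^2$, which follows directly from Assumption~\ref{as:smooth}. Taking $y = x + \beta u$, so that $y - x = \beta u$ and $\|y-x\|_2^2 = \beta^2\|u\|_2^2 = \beta^2$, this yields
\[
\left|f_i(x+\beta u) - f_i(x) - \beta\,\langle \nabla f_i(x), u\rangle\right| \leq \frac{L\beta^2}{2}.
\]
Dividing through by $\beta > 0$ gives the scalar bound $\frac{L\beta}{2}$, which is exactly the claimed inequality~\eqref{eq:two_point_bias}.

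I do not anticipate any real obstacle here: this is the classical finite-difference smoothing estimate, and the only mildly delicate point is making sure the norm reduction in the first step genuinely uses $\|u\|_2 = 1$ (otherwise an extra factor of $\|u\|_2$ would appear). The proof requires no probabilistic reasoning—$u$ is treated as an arbitrary fixed unit vector—so the result holds deterministically and can later be combined with whatever distribution is placed on the perturbation directions.
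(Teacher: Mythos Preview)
Your proposal is correct and follows essentially the same route as the paper: both reduce the vector norm to the scalar $\bigl|\beta^{-1}(f_i(x+\beta u)-f_i(x)) - \langle \nabla f_i(x),u\rangle\bigr|$ using $\|u\|_2=1$, and then bound this by $\frac{L\beta}{2}$ via $L$-smoothness. The only cosmetic difference is that the paper writes out the fundamental-theorem-of-calculus integral explicitly, whereas you invoke the descent lemma directly.
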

\begin{proof} We have
	\begin{align*}
		\left\|\widehat{\nabla}_u f_{i}(x ; \beta) - u u^{T} \nabla f_{i}(x)\right\|_{2} &= \left| \frac{f_{i}(x+\beta u) - f_{i}(x)}{\beta} - \langle \nabla f_{i}(x), u \rangle \right|\\
		&=\frac{1}{\beta}\left|\int_0 ^{\beta} \left(\langle\nabla f_{i}(x+\gamma u), u\rangle - \langle\nabla f_{i}(x), u\rangle\right) d\gamma \right|\\
		&\leq \frac{1}{\beta}\int_0 ^{\beta}  \|\nabla f_{i}(x+\gamma u)-\nabla f_{i}(x)\|_{2}\|u\|_{2}d\gamma\\
		&\leq \frac{1}{\beta} \int_{0}^{\beta} L\gamma \|u\|_{2}^2 d\gamma = \frac{L\beta}{2},
	\end{align*}
 where in the second step we used the fundamental theorem of calculus, and in the fourth step we used the $L$-smoothness of $f$.
\end{proof}

Next, we provide the proof of Lemma~\ref{lm:projected_bias}.

\begin{proof}[Proof of Lemma~\ref{lm:projected_bias}]
By the definitions of $G_{\beta_k}$ and $P_k$, we can write the squared Frobenius norm of the difference as:
\begin{align*}
    \left\lVert G_{\beta_{k}}(x_k; \mathcal{S}_{k}) - P_{k}(\nabla F(x_k)) \right\rVert^{2}_{F}
    ={} & \left\lVert \sum_{(i, u) \in \mathcal{S}_k} \left( \widehat{\nabla}_u f_i(x_k; \beta_k) - u u^T \nabla f_i(x_k) \right) e_i^T \right\rVert^{2}_{F}\\
    ={} & \sum_{i\in\mathcal{I}_k} \left\lVert \sum_{u\in\mathcal{U}_{k,i}}
    \left(\widehat{\nabla}_u f_i(x_k; \beta_k) - u u^T \nabla f_i(x_k) \right) \right\rVert^{2}_{2} \\
    ={} & \sum_{(i,u)\in\mathcal{S}_k} \left\lVert \widehat{\nabla}_u f_i(x_k; \beta_k) - u u^T \nabla f_i(x_k)  \right\rVert^{2}_{2},
\end{align*}
where we used the fact that the directions in $\mathcal{U}_{k,i}=\setv*{u}{(i, u) \in \mathcal{S}_k}$ are orthonormal any fixed $i$. Now Lemma~\ref{lm:projected_bias} directly follows by plugging \eqref{eq:two_point_bias} into the above inequality.
\end{proof}

Now we can present the proof of Lemma~\ref{lm:estimator_bias}.
\begin{proof}[Proof of Lemma~\ref{lm:estimator_bias}]
Let $\mathcal{D}$ denote either the uniform distribution over $\{e_1,\ldots,e_d\}$ or the uniform distribution over $\mathbb{S}^{d-1}$. Taking the conditional expectation of $g_k$ with respect to $\mathcal{F}_k$, and using the independence of indices and directions in $\mathcal{R}_k$, we have:
\begin{align*}
    \mathbb{E}\Econd*{g_{k}}{\mathcal{F}_{k}} 
    &= \frac{1}{n} J_k \mathbf{1} + \frac{d}{R}
    \cdot\frac{R}{n}\sum_{i=1}^n \left( \mathbb{E}_{u\sim\mathcal{D}}\Econd*{\widehat{\nabla}_u f_i(x_k; \beta_k)}{\mathcal{F}_k} - \mathbb{E}_{u\sim\mathcal{D}}\Econd*{u u^TJ_k^{(i)}}{\mathcal{F}_k} \right) \\
    &= \frac{1}{n} J_k \mathbf{1} + \frac{d}{n} \sum_{i=1}^n \mathbb{E}_{u\sim\mathcal{D}}\Econd*{\widehat{\nabla}_u f_i(x_k; \beta_k)}{\mathcal{F}_k} - \frac{1}{n} \sum_{i=1}^n   J_k^{(i)} \\
    &= \frac{d}{n} \sum_{i=1}^n \mathbb{E}_{u\sim\mathcal{D}}\Econd*{\widehat{\nabla}_u f_i(x_k; \beta_k)}{\mathcal{F}_k},
\end{align*}
where we used $\mathbb{E}_{u\sim\mathcal{D}}[uu^T]=d^{-1}I_d$.

Now we discuss the two choices of $\mathcal{D}$ separately.

\paragraph{Random Coordinate Direction:} If $\mathcal{D}$ is the uniform distribution over $\{ e_{1}, e_{2}, \ldots, e_{d}\}$, then
\begin{align*}
  \left\lVert \mathbb{E}\Econd*{g_{k}}{\mathcal{F}_{k}} - \nabla f(x_{k}) \right\rVert^{2} 
  &= \left\lVert \frac{1}{n} \sum_{j=1}^{d} \sum_{i=1}^{n} \left(\widehat \nabla_{e_{j}} f_{i}(x_{k};\beta_k) - e_{j}e_{j}^{T} \nabla f_{i}(x_{k})\right)\right\rVert^{2}\\
  &= \sum_{j=1}^{d} \left\lVert \frac{1}{n}  \sum_{i=1}^{n} \left(\widehat \nabla_{e_{j}} f_{i}(x_{k};\beta_k) - e_{j}e_{j}^{T} \nabla f_{i}(x_{k})\right)\right\rVert^{2}\\
  &= \sum_{j=1}^{d} \left\lVert \widehat \nabla_{e_{j}} f(x_{k};\beta_k) - e_{j}e_{j}^{T} \nabla f(x_{k})\right\rVert^{2} 
  \leq \frac{d L^{2} \beta_k^{2}}{4},
\end{align*}
where in the inequality we use \eqref{eq:two_point_bias} on the $L$-smooth function $f$.

\paragraph{Random Direction on the Unit Sphere:} If $\mathcal{D}$ is the uniform distribution over the unit sphere $\mathbb{S}^{d-1}$, then we can directly apply \cite[Lemma 14]{malik2020derivative} to get
\begin{equation*}
\begin{aligned}
  \left\lVert \mathbb{E}\Econd*{g_{k}}{\mathcal{F}_{k}} - \nabla f(x_{k}) \right\rVert^{2} ={} & \left\lVert \mathbb{E}_{u\sim\mathcal{D}}\Econd*{ d\widehat \nabla_{u} f(x_{k};\beta_k)}{\mathcal{F}_{k}} - \nabla f(x_{k})\right\rVert^{2} \\
  \leq{} & \frac{L^{2} \beta_k^{2}}{4} \leq \frac{d L^{2} \beta_k^{2}}{4},
\end{aligned}
\end{equation*}
where the first inequality follows from \cite[Lemma 14]{malik2020derivative}.

Putting the two cases together, we arrive at the desired bound.
\end{proof}

\section{Proofs for the Strongly Convex Case}
\label{appendix:proof_strongly_convex}

\subsection{Proof of Lemma~\ref{lm:sc_gradient_bound}}
From Young's inequality, we have
\begin{equation*}
    \| g_k - \nabla f(x^\ast)\|_{2}^2 
    \leq  2 \|\nabla f(x_k) - \nabla f(x^\ast)\|_{2}^2 + 2\|g_k - \nabla f(x_k)\|_{2}^2.
\end{equation*}
For the first term, we notice that
\begin{align*}
    2 \|\nabla f(x_k) - \nabla f(x^\ast)\|_{2}^2 
    ={} &
    2\left\| \frac{1}{n} \sum_{i=1}^{n} (\nabla f_{i}(x_{k}) - \nabla f_{i}(x^{\ast}))\right\|_{2}^2
    \\ \leq{}  &
    \frac{2}{n} \sum_{i=1}^{n} \left\| \nabla f_{i}(x_{k}) - \nabla f_{i}(x^{\ast})\right\|_{2}^2
    \\ ={}  &
    \frac{2}{n}\left\| \nabla F(x_{k}) - \nabla F(x^{\ast})\right\|_{F}^2,
\end{align*}
where the inequality comes from the Cauchy-Schwarz inequality.
For the second term, we start from the bound provided by~\eqref{eq:nc_grad_var_bound}:
\begin{align*}
  2\mathbb{E}\Econd*{\|g_k - \nabla f(x_k)\|_{2}^2}{\mathcal{F}_{k}}
    \leq{} &
     \frac{4d}{nR} \| \nabla F(x_k) - J_k\|_F^2
   + d^2 L^2 \beta_k^2
    \\ \leq{} &
    \frac{8d}{n R} \| \nabla F(x_k) - \nabla F(x^{\ast})\|_F^2
     +\frac{8d}{nR} \| J_{k} - \nabla F(x^{\ast})\|_F^2 
   + d^2 L^2 \beta_k^2.
\end{align*}
Combining the two inequalities above, we finish the proof of~\eqref{eq:sc_gradient_bound}.

\subsection{Proof of Lemma~\ref{lm:sc_jacobian_recur}}
To prove Lemma~\ref{lm:sc_jacobian_recur}, we will need the following auxilliary lemma:
\begin{lemma}
\label{lm:op_twice}
Under Assumption~\ref{as:algorithm_rand_var}, for any matrix $A \in \mathbb{R}^{d \times n}$, the operator $P_k$ satisfies:
\begin{equation}\label{eq:app_op_twice}
    \|P_k(A)\|_F^2 = \sum_{(i, u) \in \mathcal{S}_k} (u^T A e_i)^2 = \langle P_k(A), A \rangle_F.
\end{equation}
\end{lemma}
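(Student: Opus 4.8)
The plan is to treat $P_k(A)$ as a sum of scaled rank-one matrices and exploit the orthonormality of the sampled directions to eliminate all cross terms. First I would rewrite the definition~\eqref{eq:alg_p_def} by noting that each summand $uu^T A e_i e_i^T$ factors as the scalar $u^T A e_i$ times the rank-one matrix $u e_i^T$, so that
\[
P_k(A) = \sum_{(i,u)\in\mathcal{S}_k} (u^T A e_i)\, u e_i^T.
\]
The essential building block is the Frobenius inner product of two such rank-one matrices, which I would compute directly from $\langle X, Y\rangle = \tr(X^T Y)$:
\[
\langle u e_i^T,\, u' e_{i'}^T\rangle = \tr\!\big(e_i u^T u' e_{i'}^T\big) = (u^T u')\,(e_i^T e_{i'}).
\]

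For the first equality, I would expand $\|P_k(A)\|_F^2 = \langle P_k(A), P_k(A)\rangle$ into a double sum over pairs $(i,u),(i',u')\in\mathcal{S}_k$ and substitute the identity above. The factor $e_i^T e_{i'}=\delta_{ii'}$ forces $i=i'$, so only directions sharing the same component index can interact; among these, the orthonormality of $\mathcal{U}_{k,i}$ guaranteed by Assumption~\ref{as:algorithm_rand_var} gives $u^T u'=0$ for distinct $u,u'$, killing the remaining off-diagonal terms. The surviving diagonal contributions collapse to $\sum_{(i,u)\in\mathcal{S}_k}(u^T A e_i)^2$, which is exactly the middle expression. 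This is the only step that invokes the orthonormality hypothesis, and it is precisely what turns the identity into an equality rather than an inequality.

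For the second equality I would instead compute $\langle P_k(A), A\rangle$ term by term, using $\langle u e_i^T, A\rangle = \tr(e_i u^T A) = u^T A e_i$; summing over $\mathcal{S}_k$ reproduces $\sum_{(i,u)\in\mathcal{S}_k}(u^T A e_i)^2$ without any appeal to orthonormality. Since both computations reduce to bookkeeping with traces and Kronecker deltas, there is no genuine obstacle here; the only point requiring care is to keep track of which indices are free in the double sum and to apply orthonormality \emph{within} each fixed-$i$ block $\mathcal{U}_{k,i}$ rather than across the whole set $\mathcal{S}_k$, since directions attached to different component indices need not be orthogonal.
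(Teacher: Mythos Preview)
Your proposal is correct and follows essentially the same approach as the paper's proof: both expand the double sum, use $e_i^Te_{i'}=\delta_{ii'}$ to restrict to a common component index, and then invoke the orthonormality of $\mathcal{U}_{k,i}$ from Assumption~\ref{as:algorithm_rand_var} to kill the remaining cross terms. Your factoring of each summand as the scalar $u^T A e_i$ times the rank-one matrix $ue_i^T$ is a slightly cleaner presentation than the paper's direct trace manipulations, but the underlying argument is identical.
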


\begin{proof}
  Recall the definition $P_k(A) = \sum_{(i, u) \in \mathcal{S}_k} u u^T A e_i e_i^T$. We have
  \begin{align*}
  \left\|P_k(A)\right\|_F^2
  ={} & \tr\!\Bigg[
  \sum_{(i, u) \in \mathcal{S}_k} \!\!\!u u^T A e_i e_i^T
  \bigg\{\!\sum_{(j, v) \in \mathcal{S}_k} \!\!\!v v^T A e_j e_j^T\bigg\}^{\!T}
  \Bigg] \\
  ={} & \sum_{(i, u) \in \mathcal{S}_k}\sum_{(j, v) \in \mathcal{S}_k}
  \tr\!\left(u^T A e_i e_i^Te_je_j^TA^T
  v v^T u\right) \\
  ={} & \sum_{(i, u) \in \mathcal{S}_k}\sum_{v:(i, v) \in \mathcal{S}_k}\tr\!\left(u^T A e_i e_i^TA^T
  v v^T u\right) \\
  ={} & \sum_{(i, u) \in \mathcal{S}_k}\tr\!\left(u^T A e_i e_i^TA^T
  u\right)
  =\sum_{(i,u)\in\mathcal{S}_k} (u^TAe_i)^2.
  \end{align*}
  Here the third step follows by noting that $e_i^Te_j$ equals $1$ if $i=j$ and equals $0$ if $i\neq j$; the fourth step follows by the requirement in Assumption~\ref{as:algorithm_rand_var} that the vectors in $\setv*{u}{(i, u) \in \mathcal{S}_k}$ are orthonormal.

Similarly, for the inner product, we have
\begin{align*}
    \langle P_k(A), A \rangle_F 
    &= \sum_{(i, u) \in \mathcal{S}_k} \tr(e_i e_i^T A^T u u^T A)
    = \sum_{(i, u) \in \mathcal{S}_k} (u^T A e_i)^2.
\end{align*}
Comparing the two results completes the proof.
\end{proof}

Now we provide a proof for Lemma \ref{lm:sc_jacobian_recur}.

\begin{proof}[Proof of Lemma \ref{lm:sc_jacobian_recur}]
Let $A_k = J_k - \nabla F(x^*)$ and $B_k = \nabla F(x_k) - \nabla F(x^*)$. The update rule can be rewritten as:
\[
    J_{k+1} - \nabla F(x^*) = \underbrace{A_k - \omega_k P_k(A_k - B_k)}_{Y_k} + \underbrace{\omega_k (G_{\beta_k}(x_k; \mathcal{S}_k) - P_k(\nabla F(x_k)))}_{Z_k}.
\]
Using Young's inequality,  we have $\|Y_k + Z_k\|_F^2 \leq (1 + \frac{\sigma}{2}) \|Y_k\|_F^2 + (1 + \frac{2}{\sigma}) \|Z_k\|_F^2$.

First, we bound $\mathbb{E}\Econd*{\|Z_k\|_F^2}{\mathcal{F}_k}$. Since $\omega_k^2 = \omega_k$, applying  Lemma \ref{lm:projected_bias} leads to
\[
    \mathbb{E}\Econd*{\|Z_k\|_F^2}{\mathcal{F}_k} = \mathbb{E}\Econd*{\omega_k \|G_{\beta_k}(x_k; \mathcal{S}_k) - P_k(\nabla F(x_k))\|_F^2}{\mathcal{F}_k} \leq \frac{L^2 \beta_k^2}{4} \mathbb{E}\Econd*{\omega_k |\mathcal{S}_k|}{\mathcal{F}_k} = \frac{\nu L^2 \beta_k^2}{4}.
\]

 Next, we bound $\mathbb{E}\Econd*{\|Y_k\|_F^2}{\mathcal{F}_k}$. By expanding the squared norm, we have
\[
    \|Y_k\|_F^2 = \|A_k\|_F^2 + \|\omega_k P_k(A_k - B_k)\|_F^2 - 2 \langle A_k, \omega_k P_k(A_k - B_k) \rangle_F.
\]
Now we apply~\eqref{eq:app_op_twice} to obtain
\begin{align*}
    \mathbb{E}\Econd*{\|\omega_k P_k(A_k - B_k)\|_F^2}{\mathcal{F}_k} 
    &= \mathbb{E}\Econd*{\omega_k \langle P_k(A_k - B_k), A_k - B_k \rangle_F}{\mathcal{F}_k} \\
    &= \langle \sigma(A_k - B_k), A_k - B_k \rangle_F = \sigma \|A_k - B_k\|_F^2.
\end{align*}
Also note that the expectation of the cross term $- 2 \langle A_k, \omega_k P_k(A_k - B_k) \rangle_F$ is $-2 \langle A_k, \sigma(A_k - B_k) \rangle_F$. Combining these, we have
\begin{align*}
    \mathbb{E}\Econd*{\|Y_k\|_F^2}{\mathcal{F}_k} 
    &= \|A_k\|_F^2 + \sigma \|A_k - B_k\|_F^2 - 2\sigma \langle A_k, A_k - B_k \rangle_F \\
    &= (1 - \sigma) \|A_k\|_F^2 + \sigma \|B_k\|_F^2.
\end{align*}
Finally, combining the bounds for $Y_k$ and $Z_k$, we get
\begin{align*}
    \mathbb{E}\Econd*{\|J_{k+1} - \nabla F(x^*)\|_F^2}{\mathcal{F}_k} 
    &\leq \left(1 + \frac{\sigma}{2}\right) \left( (1 - \sigma) \|A_k\|_F^2 + \sigma \|B_k\|_F^2 \right) + \left(1 + \frac{2}{\sigma}\right) \frac{\nu L^2 \beta_k^2}{4} \\
    &\leq \left(1 - \frac{\sigma}{2} - \frac{\sigma^2}{2}\right) \|A_k\|_F^2 + \left(\sigma + \frac{\sigma^2}{2}\right) \|B_k\|_F^2 + \left(1 + \frac{2}{\sigma}\right) \frac{\nu L^2 \beta_k^2}{4}.
\end{align*}
Relaxing the coefficients using $\sigma \in (0, 1)$ (specifically $1 - \frac{\sigma}{2} - \frac{\sigma^2}{2} \leq 1 - \frac{\sigma}{2}$ and $\sigma + \frac{\sigma^2}{2} \leq 2\sigma$) yields the stated result.
\end{proof}

\section{Proof of Lemma~\ref{lm:c_alt_distance}}
\label{sec:app_proof_lm_c_alt_distance}

Before proving Lemma~\ref{lm:c_alt_distance}, we introduce the following lemma about the non-expansiveness of a proximal gradient step with true gradient. 
\begin{lemma}
   Suppose Assumption 1 holds and each $f_i$ is convex. For any $\alpha \in (0, 2/L]$, the mapping $T(x) = \operatorname{prox}_{\alpha \psi}(x - \alpha \nabla f(x))$ is a non-expansion, i.e., for any $x, y \in \mathbb{R}^d$:
\begin{equation}\label{eq:nc_cmp_nonexp}
    \|T(x) - T(y)\|_2 \leq \|x - y\|_2.
\end{equation}
In particular, since $x^*$ is a fixed point of $T$, it holds that $\|\text{prox}_{\alpha \psi}(x - \alpha \nabla f(x)) - x^*\|_2 \leq \|x - x^*\|_2$. 
\end{lemma}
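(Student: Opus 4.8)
The plan is to write $T=\operatorname{prox}_{\alpha\psi}\circ G$ as the composition of the gradient-step map $G(x)\defeq x-\alpha\nabla f(x)$ followed by the proximal operator, and to prove that each factor is non-expansive. Since the proximal operator of a convex, lower semicontinuous $\psi$ is firmly non-expansive, and in particular $1$-Lipschitz in the $\ell_2$ norm, it suffices to establish that $G$ is non-expansive on the prescribed range $\alpha\in(0,2/L]$; the bound \eqref{eq:nc_cmp_nonexp} then follows immediately by composing the two Lipschitz estimates.

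To show that $G$ is non-expansive, I would expand
\[
\|G(x)-G(y)\|_2^2 = \|x-y\|_2^2 - 2\alpha\langle\nabla f(x)-\nabla f(y),x-y\rangle + \alpha^2\|\nabla f(x)-\nabla f(y)\|_2^2,
\]
so that the desired inequality $\|G(x)-G(y)\|_2\leq\|x-y\|_2$ reduces, after cancelling $\|x-y\|_2^2$ and dividing by $\alpha>0$, to
\[
\alpha\|\nabla f(x)-\nabla f(y)\|_2^2 \leq 2\langle\nabla f(x)-\nabla f(y),x-y\rangle.
\]
This is exactly the point where the co-coercivity of $\nabla f$ is needed: once we have $\langle\nabla f(x)-\nabla f(y),x-y\rangle\geq L^{-1}\|\nabla f(x)-\nabla f(y)\|_2^2$, the right-hand side is at least $\tfrac{2}{L}\|\nabla f(x)-\nabla f(y)\|_2^2$, which dominates the left-hand side precisely when $\alpha\leq 2/L$.

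The main obstacle is that Lemma~\ref{lm:smoothness} supplies co-coercivity in the form \eqref{eq:sc_smoothness}, namely $\langle\nabla f(x)-\nabla f(y),x-y\rangle\geq (nL)^{-1}\|\nabla F(x)-\nabla F(y)\|_F^2$, which is phrased in terms of the full Jacobian rather than $\nabla f$ itself. To bridge this I would use $\nabla f=\tfrac1n\sum_i\nabla f_i$ together with the Cauchy--Schwarz (Jensen) inequality to get $\|\nabla f(x)-\nabla f(y)\|_2^2\leq\tfrac1n\sum_{i=1}^n\|\nabla f_i(x)-\nabla f_i(y)\|_2^2=\tfrac1n\|\nabla F(x)-\nabla F(y)\|_F^2$, i.e.\ $\|\nabla F(x)-\nabla F(y)\|_F^2\geq n\|\nabla f(x)-\nabla f(y)\|_2^2$. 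Substituting this into \eqref{eq:sc_smoothness} recovers the standard scalar co-coercivity $\langle\nabla f(x)-\nabla f(y),x-y\rangle\geq L^{-1}\|\nabla f(x)-\nabla f(y)\|_2^2$, which closes the argument for $G$ and hence for $T$.

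Finally, for the ``in particular'' clause I would observe that $x^\ast$ minimizes $h=f+\psi$, so the first-order optimality condition $0\in\nabla f(x^\ast)+\partial\psi(x^\ast)$ is equivalent to the fixed-point identity $x^\ast=\operatorname{prox}_{\alpha\psi}(x^\ast-\alpha\nabla f(x^\ast))=T(x^\ast)$ for every $\alpha>0$. Setting $y=x^\ast$ in \eqref{eq:nc_cmp_nonexp} then yields the stated contraction of the proximal-gradient step toward $x^\ast$.
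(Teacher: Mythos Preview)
Your proof is correct and follows essentially the same route as the paper: both factor $T$ through the proximal operator, expand $\|x-y-\alpha(\nabla f(x)-\nabla f(y))\|_2^2$, and invoke co-coercivity of $\nabla f$ to absorb the $\alpha^2$ term when $\alpha\leq 2/L$. The only cosmetic difference is that you explicitly derive the scalar co-coercivity bound from the Frobenius-norm inequality \eqref{eq:sc_smoothness} via Jensen, whereas the paper simply uses the standard co-coercivity of a convex $L$-smooth $f$ directly; you also spell out the fixed-point characterization of $x^\ast$, which the paper leaves implicit.
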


\begin{proof}
Since the proximal operator $\operatorname{prox}_{\alpha \psi}(\cdot)$ is non-expansive, we have
\begin{equation} \label{eq:nc_prox_nonexp}
    \|T(x) - T(y)\|_2 \leq \|(x - \alpha \nabla f(x)) - (y - \alpha \nabla f(y))\|_2.
\end{equation}
Expanding the squared norm on the right-hand side of \eqref{eq:nc_prox_nonexp} gives
\begin{equation*}
\begin{aligned}
\|(x - y) - \alpha(\nabla f(x) - \nabla f(y))\|_2^2 ={} & \|x - y\|_2^2 - 2\alpha \langle \nabla f(x) - \nabla f(y), x - y \rangle + \alpha^2 \|\nabla f(x) - \nabla f(y)\|_2^2 \\
\leq{} & \|x - y\|_2^2 - \frac{2\alpha}{L} \|\nabla f(x) - \nabla f(y)\|_2^2 + \alpha^2 \|\nabla f(x) - \nabla f(y)\|_2^2 \\
={} &\|x - y\|_2^2 - \alpha \left( \frac{2}{L} - \alpha \right) \|\nabla f(x) - \nabla f(y)\|_2^2,
\end{aligned}
\end{equation*}

Since $\alpha\in(0,2/L]$ implies $\alpha(2/L - \alpha)\leq 0$, the last term can further be dropped, which, combined with~\eqref{eq:nc_prox_nonexp}, leads to~\eqref{eq:nc_cmp_nonexp}.
\end{proof}

We are now ready to prove Lemma~\ref{lm:c_alt_distance}.

\begin{proof}[Proof of Lemma \ref{lm:c_alt_distance}]
Recall that $x_{k+1} = \text{prox}_{\alpha \psi}(x_k - \alpha g_k)$. By the optimality condition of the proximal operator, there exists a subgradient $\xi \in \partial \psi(x_{k+1})$ such that
\begin{equation} \label{eq:prox_opt}
    \xi = \frac{1}{\alpha}(x_k - x_{k+1}) - g_k.
\end{equation}
By the convexity of $\psi$, for the optimal solution $x^*$, we have:
\begin{equation} \label{eq:psi_conv}
    \psi(x^*) \geq \psi(x_{k+1}) + \langle \xi, x^* - x_{k+1} \rangle.
\end{equation}
Next, we treat the smooth part $f$. By the convexity and $L$-smoothness of $f$, we have
\begin{gather*}
    f(x^*) \geq f(x_k) + \langle \nabla f(x_k), x^* - x_k \rangle, \label{eq:f_conv} \\
    f(x_k) \geq f(x_{k+1}) - \langle \nabla f(x_k), x_{k+1} - x_k \rangle - \frac{L}{2} \|x_{k+1} - x_k\|_2^2. \label{eq:f_smooth}
\end{gather*}
Summing the above two inequalities yields:
\begin{equation} \label{eq:f_bound}
    f(x^*) \geq f(x_{k+1}) + \langle \nabla f(x_k), x^* - x_{k+1} \rangle - \frac{L}{2} \|x_{k+1} - x_k\|_2^2.
\end{equation}
Now, combining \eqref{eq:psi_conv} and \eqref{eq:f_bound}, and using $h = f + \psi$, we obtain
\begin{equation*}
    h(x^*) \geq h(x_{k+1}) + \langle \nabla f(x_k) + \xi, x^* - x_{k+1} \rangle - \frac{L}{2} \|x_{k+1} - x_k\|_2^2.
\end{equation*}
Substituting $\xi$ from \eqref{eq:prox_opt} and assuming $\alpha \leq 1/L$ gives
\begin{equation*} 
    h(x^*) \geq h(x_{k+1}) + \frac{1}{\alpha} \langle x_k - x_{k+1}, x^* - x_{k+1} \rangle + \langle \nabla f(x_k) - g_k, x^* - x_{k+1} \rangle - \frac{1}{2\alpha} \|x_{k+1} - x_k\|_2^2.
\end{equation*}
Using the identity $\|x_{k+1} - x^*\|_2^2 = \|x_k - x^*\|_2^2 - \|x_{k+1} - x_k\|_2^2 + 2\langle x_{k+1} - x_k, x_{k+1} - x^* \rangle$, then taking expectation conditioned on $\mathcal{F}_k$, the above inequality can be rearranged into a recursive bound on the distance to $x^*$:
\begin{align}
        \mathbb{E}\Econd*{\|x_{k+1} - x^*\|_2^2}{\mathcal{F}_k} \leq& \|x_k - x^*\|_2^2 - 2\alpha \mathbb{E}\Econd*{(h(x_{k+1}) - h(x^*))}{\mathcal{F}_k} 
        \nonumber \\& 
        + 2\alpha \mathbb{E}\Econd*{\langle g_k - \nabla f(x_k), x^* - x_{k+1} \rangle}{\mathcal{F}_k}.\label{eq:nc_dist_recur}
\end{align} 
We now handle the stochastic cross term. Let $\overline x_{k+1} = \text{prox}_{\alpha \psi}(x_k - \alpha \nabla f(x_k))$ be the point reached by a deterministic proximal gradient step. Then we have
\begin{align}
    \mathbb{E}\Econd*{\langle g_k - \nabla f(x_k), x^* - x_{k+1} \rangle}{\mathcal{F}_k} ={} & \langle \mathbb{E}\Econd*{g_k - \nabla f(x_k)}{\mathcal{F}_k}, x^* - \overline x_{k+1} \rangle \nonumber \\
    & + \mathbb{E}\Econd*{\langle g_k - \nabla f(x_k), \overline x_{k+1} - x_{k+1} \rangle}{\mathcal{F}_k}. \label{eq:nc_cross_decomp}
\end{align}
By using Cauchy-Schwarz inequality on the first term of \eqref{eq:nc_cross_decomp}, we get
\begin{align*}
\langle \mathbb{E}\Econd*{g_k - \nabla f(x_k)}{\mathcal{F}_k}, x^* - \overline x_{k+1} \rangle
\leq{} & \lVert\mathbb{E}\Econd*{g_k - \nabla f(x_k)}{\mathcal{F}_k}\rVert_2\cdot
\lVert x^* - \overline x_{k+1} \rVert_2 \\
\leq{} & \frac{\beta_kL\sqrt{d}}{2}\lVert T(x^\ast)-T(x_k)\rVert_2
\leq \frac{\beta_kL\sqrt{d}}{2}\lVert x^\ast-x_k\rVert_2 \\
\leq{} &\frac{\beta_kL}{2C_2}\|x^\ast-x_k\|_2^2 + \frac{\beta_kdLC_2}{8},
\end{align*}

where the second inequality comes from \eqref{eq:sc_estimator_bias} and the third comes from \eqref{eq:nc_cmp_nonexp}. 
To bound the second term of~\ref{eq:nc_cross_decomp}, note that the non-expansivity of the proximal operator implies $\|x_{k+1} - \overline x_{k+1}\|_{2} \leq \alpha \|g_k - \nabla f(x_k)\|_{2}$. We then apply the Cauchy-Schwarz inequality on the second term of \eqref{eq:nc_cross_decomp} and use the bound~\eqref{eq:nc_grad_var_bound} to get
\begin{align*}
    &\mathbb{E}\Econd*{\langle g_k - \nabla f(x_k), \overline x_{k+1} - x_{k+1} \rangle}{\mathcal{F}_k} \\
    \leq{} & \alpha \mathbb{E}\Econd*{\|g_k - \nabla f(x_k)\|_2^2}{\mathcal{F}_k} \\
    \leq{} &
    \alpha\left(\frac{2d}{nR} \| \nabla F(x_k) - J_k\|_F^2
   + \frac{d^2 L^2 \beta_k^2}{2}\right) \\
    \leq{} & \frac{4 \alpha d}{n R} \| \nabla F(x_k) - \nabla F(x^{\ast})\|_F^2 + \frac{4 \alpha d}{nR } \| J_{k} - \nabla F(x^{\ast})\|_F^2 
   + \frac{\alpha d^2 L^2 \beta_k^2}{2},
\end{align*}
where the last step follows from Young's inequality.
Plugging these two bounds back to \eqref{eq:nc_cross_decomp}, we have 
\begin{align*}
        \mathbb{E}\Econd*{\langle g_k - \nabla f(x_k), x^* - x_{k+1} \rangle}{\mathcal{F}_k} &\leq 
        \frac{4 \alpha d}{n R} \| \nabla F(x_k) - \nabla F(x^{\ast})\|_F^2 + \frac{4 \alpha d}{nR } \| J_{k} - \nabla F(x^{\ast})\|_F^2 
       \\ &\quad + 
       \frac{\beta_kL}{2C_2} \|x^* -  x_{k}\|_2^2 + \frac{\beta_k d LC_2}{8}+ \frac{\alpha d^2 L^2 \beta_k^2}{2} .
\end{align*}
Plugging this bound to \eqref{eq:nc_dist_recur} finishes the proof.
\end{proof}

\section{Proofs for the Non-convex Case}
\label{app:proof_nonconvex}

\subsection{Proof of Lemma~\ref{lm:nc_grad_var_bound}}
\begin{proof}
We know that
\begin{align*}
    g_{k} - \nabla f(x_{k})
    &={} 
    \frac{d}{R}\sum_{(i, u) \in\mathcal{R}_k}
  \left(u u^T \nabla f_i(x_k)
  -uu^T J_k^{(i)}\right)
  -\left(\nabla f(x_k)- \frac{1}{n}J_k\cdot\mathbf{1}\right)\\
  &\quad +\frac{d}{R}\sum_{(i, u) \in\mathcal{R}_k}
    \left(\widehat \nabla_u f_i(x_k;\beta_k) - u u^T \nabla f_i(x_k)\right).
\end{align*}
By Young's inequality and Lemma \ref{lm:direc_bias}, we have
  \begin{align*}
      \lVert
      g_{k} - \nabla f(x_{k})
    \rVert_{2}^{2}
      \leq{} & 2 \left\lVert
    \frac{d}{R}\sum_{(i, u) \in\mathcal{R}_k}
   u u^T  \left( \nabla f_i(x_k)
  - J_k^{(i)}\right)
  -\left(\nabla f(x_k)- \frac{1}{n}J_k\cdot\mathbf{1}\right)
    \right\rVert_{2}^{2}
    + \frac{d^2 L^2 \beta_k^2}{2} \\
    ={} &
    2 \left\lVert
    \frac{1}{R}\sum_{r=1}^R
    \left[
   du_k^r (u_k^r)^T  \left( \nabla f_{i_k^r}(x_k)
  - J_k^{(i_k^r)}\right)
  -\left(\nabla f(x_k)- \frac{1}{n}J_k\cdot\mathbf{1}\right)\right]
    \right\rVert_{2}^{2}
    + \frac{d^2 L^2 \beta_k^2}{2}
    \numberthis \label{eq:nc_gvar_1}.
  \end{align*}
  Note that for each $r=1,\ldots,R$, we have
  \[
  \begin{aligned}
  \mathbb{E}\Econd*{ 
  du_k^r(u_k^r)^T\left( \nabla f_{i_k^r}(x_k) - J_k^{(i_k^r)}\right)}{\mathcal{F}_k}
  ={} &
  \mathbb{E}\Econd*{ 
  du_k^r(u_k^r)^T}{\mathcal{F}_k}\mathbb{E}\Econd*{\nabla f_{i_k^r}(x_k) - J_k^{(i_k^r)}}{\mathcal{F}_k} \\
  ={} & \frac{1}{n}\sum_{i=1}^n\nabla f_{i}(x_k) - J_k^{(i)}
  = \nabla f(x_k)- \frac{1}{n}J_k\cdot\mathbf{1},
  \end{aligned}
  \]
  where the first step follows by the (conditional) independence of $u_k^r$ and $i_k^r$, and the remaining steps follows by how each $u_k^r$ and $i^r_k$ are generated. Furthermore,
  \[
  \begin{aligned}
  & \mathbb{E}\Econd*{ 
  \left\|du_k^r(u_k^r)^T\left( \nabla f_{i_k^r}(x_k) - J_k^{(i_k^r)}\right)\right\|_{2}^2}{\mathcal{F}_k} \\
  ={} & d^2 \tr\mathbb{E}\Econd*{ 
  \left( \nabla f_{i_k^r}(x_k) - J_k^{(i_k^r)}\right)\left( \nabla f_{i_k^r}(x_k) - J_k^{(i_k^r)}\right)^Tu_k^r(u_k^r)^Tu_k^r(u_k^r)^T}{\mathcal{F}_k} \\
  ={} & d^2\tr\!\left\{\mathbb{E}\Econd*{ 
  \left( \nabla f_{i_k^r}(x_k) - J_k^{(i_k^r)}\right)\left( \nabla f_{i_k^r}(x_k) - J_k^{(i_k^r)}\right)^T}{\mathcal{F}_k}\mathbb{E}\Econd*{u_k^r(u_k^r)^T}{\mathcal{F}_k}\right\} \\
  ={} & d\tr\left\{
  \frac{1}{n}\sum_{i=1}^n
  \left( \nabla f_{i}(x_k) - J_k^{(i)}\right)\left( \nabla f_{i}(x_k) - J_k^{(i)}\right)^T
  \right\} \\
  ={} & \frac{d}{n}\|\nabla F(x_k)-J_k\|_F^2,
  \end{aligned}
  \]
  where we used the (conditional) independence of $u_k^r$ and $i_k^r$ and $\mathbb{E}\Econd*{u_k^r(u_k^r)^T}{\mathcal{F}_k}=d^{-1}I_d$.
  Thus
    \begin{align*}
       &\mathbb{E}\Econd*{\lVert
      g_{k} - \nabla f(x_{k})
    \rVert_{2}^{2}}{\mathcal{F}_k}\\
             \leq{} & \frac{2}{R^2}
             \sum_{r=1}^R\mathbb{E}\Econd*{ 
  \left\|du_k^r(u_k^r)^T\left( \nabla f_{i_k^r}(x_k) - J_k^{(i_k^r)}\right)
  -\left(\nabla f(x_k)- \frac{1}{n}J_k\cdot\mathbf{1}\right)\right\|^2}{\mathcal{F}_k}
  + \frac{d^2 L^2 \beta_k^2}{2}\\
  \leq{} & \frac{2}{R^2}
             \sum_{r=1}^R\mathbb{E}\Econd*{ 
  \left\|du_k^r(u_k^r)^T\left( \nabla f_{i_k^r}(x_k) - J_k^{(i_k^r)}\right)\right\|_{2}^2}{\mathcal{F}_k}
  + \frac{d^2 L^2 \beta_k^2}{2} \\
  \leq{} & \frac{2d}{nR} \| \nabla F(x_k) - J_k\|_F^2
   + \frac{d^2 L^2 \beta_k^2}{2}.
  \end{align*}
The proof is now complete.
\end{proof}

\subsection{Proof of Lemma~\ref{lm:nc_jacobian_error}}

\begin{proof}
We decompose the error into a drift term and an update error term:
\[
    \nabla F(x_{k+1}) - J_{k+1} = (\nabla F(x_{k+1}) - \nabla F(x_k)) + (\nabla F(x_k) - J_{k+1}).
\]
Applying Young's inequality leads to
\begin{equation}
\label{eq:grad_recursion_split}
    \lVert \nabla F(x_{k+1}) - J_{k+1} \rVert_{F}^{2} 
    \leq  \left(1 + \frac{\sigma}{4}\right) \lVert J_{k+1} - \nabla F(x_k) \rVert_{F}^{2} 
    + \left(1 + \frac{4}{\sigma}\right) \lVert \nabla F(x_{k+1}) - \nabla F(x_k) \rVert_{F}^{2} 
\end{equation}
We further treat the first term of~\eqref{eq:grad_recursion_split} as follows:
\begin{align*}
  \lVert J_{k+1} - \nabla F(x_k) \rVert_{F}^{2}
  & = \lVert (I - \omega_k P_k)(J_k - \nabla F(x_k)) + \omega_k (G_{\beta_k}(x_k; \mathcal{S}_k) - P_k(\nabla F(x_k)))\rVert_{F}^{2}\\
  &\leq \left(1 + \frac{\sigma}{4}\right) \lVert (I - \omega_k P_k)(J_k - \nabla F(x_k)) \rVert_{F}^{2} + \left(1 + \frac{4}{\sigma}\right) \frac{ \omega_{k} \lvert \mathcal{S}_{k}\rvert L^2 \beta_k^2}{4}\numberthis \label{eq:nc_jac_2},
\end{align*}
where the inequality comes from Young's inequality and Lemma \ref{lm:projected_bias}. For the first term of \eqref{eq:nc_jac_2}, we first expand the squared Frobenius norm:
\begin{align*}
    \lVert (I - \omega_k P_k)(J_k - \nabla F(x_k)) \rVert_{F}^{2}
    ={} & \lVert J_k - \nabla F(x_k) \rVert_F^2 - 2 \langle J_k - \nabla F(x_k), \omega_k P_k(J_k - \nabla F(x_k)) \rangle_F \\
    & + \lVert \omega_k P_k(J_k - \nabla F(x_k)) \rVert_F^2.
\end{align*}
By $\omega_k^2 = \omega_k$ and~\eqref{eq:app_op_twice}, we get
\begin{align*}
    \lVert (I - \omega_k P_k)(J_k - \nabla F(x_k)) \rVert_{F}^{2}
    ={} & \lVert J_k - \nabla F(x_k) \rVert_F^2 - 2\omega_k \langle P_k(J_k - \nabla F(x_k)), J_k - \nabla F(x_k) \rangle_F \\
    & + \omega_k \langle P_k(J_k - \nabla F(x_k)), J_k - \nabla F(x_k) \rangle_F \\
    ={} & \lVert J_k - \nabla F(x_k) \rVert_F^2 - \omega_k \langle P_k(J_k - \nabla F(x_k)), J_k - \nabla F(x_k) \rangle_F.
\end{align*}
Now, we take the conditional expectation with respect to $\mathcal{F}_k$. Using our requirement~\eqref{eq:alg_omega_req1}, we have
\begin{align*}
    & \mathbb{E}\Econd*{\lVert (I - \omega_k P_k)(J_k - \nabla F(x_k)) \rVert_{F}^{2}}{\mathcal{F}_k} \\
    ={} & \lVert J_k - \nabla F(x_k) \rVert_F^2 - \langle \mathbb{E}\Econd*{\omega_k P_k(J_k - \nabla F(x_k))}{\mathcal{F}_k}, J_k - \nabla F(x_k) \rangle_F \\
    ={} & \lVert J_k - \nabla F(x_k) \rVert_F^2 - \langle \sigma (J_k - \nabla F(x_k)), J_k - \nabla F(x_k) \rangle_F \\
    ={} & (1 - \sigma) \lVert J_k - \nabla F(x_k) \rVert_F^2.
\end{align*}
Combining this with the definition of $\nu$ and taking expectations on \eqref{eq:nc_jac_2} gives
\begin{equation*}
  \mathbb{E} \Econd*{\lVert J_{k+1} - \nabla F(x_k) \rVert_{F}^{2}}{\mathcal{F}_{k}}
   \leq \left(1+ \frac{\sigma}{4}\right)(1 - \sigma) \lVert J_k - \nabla F(x_k) \rVert_F^2
   + \left(1 + \frac{4}{\sigma}\right) \frac{ \nu L^2 \beta_k^2}{4}.
\end{equation*}
Now we can plug the above bound into~\eqref{eq:grad_recursion_split} and take conditional expectations with respect to $\mathcal{F}_{k}$. Noting that $(1+\frac{\sigma}{4})^2(1-\sigma) \leq 1 - \frac{\sigma}{2}$ and $(1+\frac{\sigma}{4})(1+\frac{4}{\sigma}) \leq \frac{10}{\sigma}$ for $\sigma \in (0, 1)$, we obtain
\begin{align}
  \label{eq:nc4acc}
  &\mathbb{E}\Econd*{\lVert \nabla F(x_{k+1}) - J_{k+1} \rVert_{F}^{2}}{\mathcal{F}_{k}}
  \nonumber \\ \leq{} &
    \left(1- \frac{\sigma}{2}\right) \lVert \nabla F(x_{k}) - J_{k})\rVert_{F}^{2}
      + \frac{5  }{\sigma} \mathbb{E}\Econd*{\lVert \nabla F(x_{k+1}) - \nabla F(x_{k}) \rVert_{2}^{2}}{\mathcal{F}_k} 
      +  \frac{5 \nu n d L^{2} \beta_{k}^{2} }{2 \sigma}
   \\ \leq{} &
    \left(1- \frac{\sigma}{2}\right) \lVert \nabla F(x_{k}) - J_{k})\rVert_{F}^{2}
      + \frac{5n L^{2} }{\sigma} \mathbb{E}\Econd*{\lVert x_{k+1} - x_{k}\rVert_{2}^{2}}{\mathcal{F}_k} 
      +  \frac{5 \nu n d L^{2} \beta_{k}^{2} }{2 \sigma},\nonumber 
\end{align}
where in the last step we use the $L$-smoothness of the component functions.
\end{proof}

\subsection{Proof of Lemma \ref{lm:nc_frac_g}}

\begin{proof}
Using Lemma 2 in~\cite{j2016proximal}, for any $y \in \mathbb{R}^{d}$, we have
  \begin{align}
    \label{eq:nc_frac_g1}
    h(x_{k+1}) \leq{} &
      h(y) + \langle x_{k+1} -y, \nabla f(x) - g_{k}\rangle 
      + \left(\frac{L}{2} - \frac{1}{2 \alpha}\right) \lVert x_{k+1} - x_{k}\rVert_{2}^{2} 
             \nonumber \\ &
      + \left( \frac{L}{2} + \frac{1}{2 \alpha}\right) \lVert y - x_{k}\rVert_{2}^{2}
      - \frac{1}{2 \alpha} \lVert x_{k+1} - y\rVert_{2}^{2}.
  \end{align}
  Now, let $\overline x_{k+1} = \text{prox}_{\alpha \psi}(x_k - \alpha \nabla f(x_k))$ denote the point reached by a deterministic proximal gradient step. Applying Lemma 2 in~\cite{j2016proximal} again implies that for any $y \in \mathbb{R}^{d}$:
  \begin{equation}
    \label{eq:nc_frac_g2}
    h(\overline x_{k+1}) \leq
      h(y)  + \left(\frac{L}{2} - \frac{1}{2 \alpha}\right) \lVert \overline x_{k+1} - x_{k}\rVert_{2}^{2} 
      + \left( \frac{L}{2} + \frac{1}{2 \alpha}\right) \lVert y - x_{k}\rVert_{2}^{2}
      - \frac{1}{2 \alpha} \lVert \overline x_{k+1} - y\rVert_{2}^{2}.
  \end{equation}
  We substitute $y= \overline x_{k+1}$ into \eqref{eq:nc_frac_g1} and $y= x_{k}$ into \eqref{eq:nc_frac_g2}. Summing the resulting inequalities and applying Young's inequality to the inner product term yields
  \begin{align*}
    h(x_{k+1}) \leq{} &
      h(x_{k}) + \langle x_{k+1} -\overline x_{k+1}, \nabla f(x) - g_{k}\rangle 
      + \left(\frac{L}{2} - \frac{1}{2 \alpha}\right) \lVert x_{k+1} - x_{k}\rVert_{2}^{2} 
             \\&
             + \left(\alpha^{2} L - \frac{\alpha}{2}\right) \lVert \mathfrak{g}_{\alpha}(x_{k})\rVert_{2}^{2}
      - \frac{1}{2 \alpha} \lVert x_{k+1} - \overline x_{k+1}\rVert_{2}^{2}
            \\ \leq{}&
      h(x_{k}) + \frac{\alpha}{2}\lVert \nabla f(x) - g_{k}\rVert_{2}^{2}
      + \left(\frac{L}{2} - \frac{1}{2 \alpha}\right) \lVert x_{k+1} - x_{k}\rVert_{2}^{2} 
             + \left(\alpha^{2} L - \frac{\alpha}{2}\right) \lVert \mathfrak{g}_{\alpha}(x_{k})\rVert_{2}^{2}.
  \end{align*}
  The proof is completed by taking the conditional expectation on both sides.
\end{proof}

 \end{document}